\newtheorem{theorem}{Theorem}[section]
\newtheorem{lemma}[theorem]{Lemma}
\newtheorem*{conjecture*}{Conjecture}
\newtheorem*{theorema}{Theorem A}
\newtheorem*{theoremb}{Theorem B}
\newtheorem*{theoremc}{Theorem C}
\newtheorem{proposition}[theorem]{Proposition}
\theoremstyle{definition}
\theoremstyle{question}
\newtheorem*{question*}{Open Questions}
\theoremstyle{acknowledgement}
\newtheorem*{theorem*}{Theorem}
\numberwithin{equation}{section}
\newcommand{\Er}{\mathbb{R}}
\newcommand{\calh}{{\mathcal{H}}}
\newcommand{\call}{{\mathcal{L}}}
\newcommand{\be}{\mathbf{e}}
\newcommand{\bE}{\mathbf{E}}
\newcommand{\p}{\phi(l_1, l_2, x)}
\newcommand{\pr}{\phi_{l_i} }
\newcommand{\ps}{\phi_{l_i l_j} }
\newcommand{\pt}{\phi_{l_i l_j l_k } }
\newcommand{\pu}{\phi_{l_1}}
\newcommand{\pv}{\phi_{l_1 l_1}}
\newcommand{\pw}{\phi_{l_1 l_2} }
\newcommand{\pvv}{\phi_{l_1l_1l_1} }
\newcommand{\pvw}{\phi_{l_1l_1l_2} }
\newcommand{\pww}{\phi_{l_1l_2l_2} }
\def\pmb#1{\setbox0=\hbox{#1}%
\kern-.02em\copy0\kern-\wd0
\kern0.01em\raise.01732em\copy0\kern-\wd0
\kern0.01em\raise.01732em\copy0\kern-\wd0
\lower.01732em\copy0\kern-\wd0 \kern0.01em\copy0\kern-\wd0
\kern0.01em\lower.01732em\box0}
\begin{document}

\title[Symmetry of Traveling Wave Solutionss]
{Symmetry of Traveling Wave Solutions to the Allen-Cahn Equation in $\Er^2$ }

\author{Changfeng Gui}
\address{Changfeng Gui,  School of Mathematics, Department of Mathematics, U-9, University of Connecticut\\
Storrs, CT 06269, USA} \email{gui@math.uconn.edu}

\maketitle


\begin{abstract}
In this paper, we prove even symmetry of  monotone traveling wave
solutions to the balanced Allen-Cahn equation in the entire plane.
Related results for the unbalanced Allen-Cahn equation are also
discussed.
\end{abstract}
\vskip0.2in {\bf Keywords:} Allen-Cahn equation,  Hamiltonian
Identity,  Level Set, Symmetry, Traveling Wave Solution, Mean
Curvature Soliton

\vskip 0.3cm
 {\bf 1991 Mathematical Subject Classification.} 35J20, 35J60, 35J91
49Q05, 53A04.

\section{Introduction}

The study of traveling wave solutions  is a classic area  of
research of reaction diffusion equations. In  the last decade,
traveling wave solutions and generalized traveling wave solutions
have generated a lot of  excitements among mathematicians, partially
due to rich   phenomena  in various branches of applied sciences
which are related to  traveling  fronts, such as flame propagation
in various media, population spreading, etc; The  research is also
fueled by  new discoveries of  deep and beautiful mathematics
related to traveling waves. See, for example, a recent survey
\cite{bh1} and a monograph  \cite{bh2} for details. In this paper,
we are mainly concerned with traveling wave solutions in the entire
plane of the Allen-Cahn equation with a balanced double well
potential, even though we also discuss Allen-Cahn equation with an
unbalanced potential or in the entire higher dimensional space.
Namely, we consider a traveling wave solution $v(x, y, t)=u(x,
y-ct)$ of the Allen-Cahn equation
\begin{equation}\label{allen-cahn}
v_t=\Delta_{x} v+ v_{yy}-F'(v), \quad  ( x, y, t) \in
\Er^{n-1}\times \Er \times\Er^+
\end{equation}
where $c>0$ and  $F$ is a double-well potential, i.e., $F$ is $C^{3}
$ and satisfies
\begin{equation}\label{doublewell}
\left\{
\begin{split}
&  F'(-1)=F'(1)=0, \quad  F''(-1)>0, F''(1)>0 \\
&F'(s)>0, \, s \in (-1, \theta); \quad F'(s)<0,  \, s \in (\theta,
1)
\end{split}
\right.
\end{equation}
for some $\theta \in (0, 1)$.  Without loss of generality, we may
assume that $F(-1)=0$ and $\theta=0$.  If $ F(1)=F(-1)=0$,  $F$ is
called a balanced double well potential. Otherwise, it is called an
unbalanced double well potential,  and in this case we may assume
that $F(1)>F(-1)=0$ without loss of generality.

A typical example of balanced  double well potential is  $
 F(u)=\frac{1}{4} (1-u^2)^2, \quad u \in \Er
 $, while a typical  unbalanced double well potential is
 $F(u)=\frac{1}{4} (1-u^2)^2-  a ( u^3/3-u)$  with $ a \in (-1, 0)$.
 Note that $F'(u)=(u-a)(u^2-1)$ in the latter case.

The value of  $u(x, y)$  may be restricted to $[-1, 1]$.  It is
obvious that  $u$ satisfies an elliptic equation
\begin{equation}\label{eq-wave}
\Delta_{x} u+ u_{yy}+ c u_y-F'(u)=0, \quad |u| \le 1, \quad (x, y)
\in \Er^{n-1} \times \Er.
\end{equation}

We may  assume  that the traveling  wave solution  is monotone in
time and  hence in the direction of $y$. Without loss of generality,
we assume

\begin{equation}\label{monotone}
u_y(x, y) >0, \quad (x, y) \in \Er^{n}
\end{equation}

We may also assume that the solution $u$ connects two stable states,
i.e.,
\begin{equation}\label{limit-tw}
\lim_{y \to \pm \infty} u(x, y) =\pm 1, \quad  x \in \Er^{n-1}.
\end{equation}

We note that the limit condition above does not  need to  be uniform
in $x$. Indeed, we shall see that the limits are not uniform.  When
$n=1$  there exists a unique speed $c_0 \ge 0$ such that
\eqref{eq-wave}   has a unique solution $g(y)$ (up to translation)
satisfying the monotone condition \eqref{monotone}, i.e.,
\begin{equation}\label{1d-ac}
\left \{
\begin{split}
&g''(s)+c_0 g'(s)-F'(g(s))=0, \quad s \in \Er,\\
& \lim_{s \to \infty} g(s)=1,\quad \lim_{s \to -\infty} g(s)=-1.
\end{split}
\right.
\end{equation}
where  $c_0=0$  in the balanced case and  $c_0>0$ in the unbalanced
case. We may assume that $g(0)=0$.  The solution $g$ is
non-degenerate in the sense that the linearized operator has a
kernel spanned only by $g'$.

It is well-known that  when $F$ is  balanced, $g$ is a minimizer of
the following energy functional
$$
\bE(v):=\int_{-\infty}^{\infty} [\frac{1}{2} |v'|^2 + F(v)]dx
$$
in $\calh:=\{ v \in H^1_{loc}(\Er): -1 \le v \le 1, \,\,\lim_{ s \to
\pm \infty} v(s)=\pm 1\}$ and
$$
\be:=\bE(g)=\int_{-1}^{1} \sqrt{ 2 F(u)} du<\infty.
$$

There is a significant difference between the balanced and the
unbalanced Allen-Cahn equation  when  traveling wave solutions are
concerned. The difference of zero speed and positive speed of one
dimensional traveling wave solution  $g$  for the balanced and
unbalanced potential leads to a fundamental difference of the
structure of traveling fronts in higher dimensional spaces, as
discussed below, as well as shown in Theorem 1.1 and Theorem 3.2 and
\cite{gui-hd}.
 The existence, uniqueness, stability  and other
qualitative properties of traveling wave solutions to the unbalanced
Allen-Cahn equation have been  studied in
 \cite{hmr2} \cite{hmr3}, \cite{nt1}, \cite{nt2}, \cite{tani1},
 \cite{tani2}. Similar traveling wave solutions for Fisher-KPP type
 equation or combustion equation have  also been investigated  in \cite{bonn},
 \cite{hm1}, \cite{hn}, \cite{mn}.    The typical shape of traveling
 fronts studied in these articles are conical.  The stability and uniqueness results
 are also based on the assumption that the traveling fronts are
 conical. In particular, the traveling fronts for these equations are  globally  Lipschitz
 continuous.
 Traveling wave solutions for the balanced Allen-Cahn equation are first studied in   \cite{chen},
 where non-conical and non-planar traveling fronts with axial
 symmetry are proven existing.  It is noted that the traveling fronts are not globally Lipschitz.
  Indeed, the following theorem is
 proven in \cite{chen}.

\begin{theorema}[Chen, Guo, Hamel, Ninomiya, Roquejoffre, 2007]
 For any $c>0$, there exists a solution $U(x, y)
=U(|x|, y) $ to \eqref{eq-wave},  \eqref{monotone}, \eqref{limit-tw}
such that $U_r(r, y)<0$ for $r>0$ and $U(0,0)=0$.  Furthermore, if
the 0-level set of $U$ is denoted by $\Gamma $, then

(i) when $n=2$, $\Gamma$ is asymptotically a hyperbolic cosine
curve, i.e., for some $A>0$

\begin{equation}\label{asym-2}
\lim_{y \to \infty, U(x, y)=0} \frac{ cosh(2 \mu x)}{\mu y}=
\frac{A}{c}
\end{equation}
where $\mu=\sqrt{F''(1)}$.

(ii) when $n>2$, $\Gamma$ is asymptotically a paraboloid, i.e.,
\begin{equation}\label{asym-n}
\lim_{y \to \infty, U(x, y)=0} \frac{|x|^2}{2y} =\frac{n-2}{c}.
\end{equation}
\end{theorema}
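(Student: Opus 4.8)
The plan is to build $U$ as the limit over expanding balls of solutions trapped between an explicit sub- and a super-solution, both modeled on the one–dimensional profile $g$ composed with (a regularization of) the signed distance to a prescribed interface $\Gamma_0$ --- a paraboloid when $n\ge3$, a hyperbolic–cosine graph when $n=2$ --- and then to read off the qualitative features by comparison and sliding arguments.

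\textbf{Step 1 (ansatz and the interface).} Looking for $U=U(r,y)$ with $r=|x|$, equation \eqref{eq-wave} becomes
\[
U_{rr}+\frac{n-2}{r}\,U_r+U_{yy}+cU_y-F'(U)=0,\qquad r>0,
\]
with $U_r(0,y)=0$. Substituting $U=g(d)$, where $d$ is the signed distance to a candidate interface $\Gamma_0=\{y=\varphi(r)\}$ (positive above), one gets the residual
\[
\big(g''(d)-F'(g(d))\big)+g'(d)\big(\Delta d+c\,d_y\big),
\]
where $\Delta$ is the full Laplacian of $\Er^n$ acting on such $U$. The first bracket vanishes, since $c_0=0$ in the balanced case, and $\Delta d$ restricted to $\{d=0\}$ is the mean curvature of $\Gamma_0$; requiring the remaining error to be small and of one sign forces $\Gamma_0$ to be asymptotically the paraboloid $\varphi(r)=\tfrac{c}{2(n-2)}r^2$ for $n\ge3$ and the hyperbolic–cosine graph described by \eqref{asym-2} for $n=2$, with the exponential rate $\mu=\sqrt{F''(1)}$ imposed by the linearization of the operator at the stable state $u\equiv1$, and the constants tuned so that the curvature of $\Gamma_0$ compensates the $c\,d_y$ term and (when $n\ge 3$) the singular term $\tfrac{n-2}{r}U_r$ near the tip.

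\textbf{Step 2 (barriers, existence on balls, passage to the limit).} I would then take $\overline U=\min\{1,\,g(\overline d)+\omega\}$ and $\underline U=\max\{-1,\,g(\underline d)-\omega\}$, where $\overline\Gamma,\underline\Gamma$ are $\Gamma_0$ shifted slightly up, respectively down, by a slowly growing amount, and $\omega>0$ is a small correction absorbing the residual of Step~1; both should be arranged to be increasing in $y$, decreasing in $r$, with $\underline U\le\overline U$ and $\call\overline U\le0\le\call\underline U$ for the operator $\call$ appearing in \eqref{eq-wave}. On each ball $B_R\subset\Er^n$ one solves \eqref{eq-wave} with boundary values $\underline U|_{\partial B_R}$ by monotone iteration between $\underline U$ and $\overline U$, obtaining $U_R$ with $\underline U\le U_R\le\overline U$; interior Schauder estimates and a diagonal subsequence give $U_R\to U$ in $C^2_{loc}$, a solution on all of $\Er^n$ still squeezed between $\underline U$ and $\overline U$. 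This squeeze gives $|U|\le1$, the pointwise (non-uniform) limits \eqref{limit-tw}, and, since the $0$-level set of $U$ lies in a bounded neighborhood of $\Gamma_0$, the asymptotic formulas \eqref{asym-2} and \eqref{asym-n}; a translation in $y$ normalizes $U(0,0)=0$.

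\textbf{Step 3 (monotonicity and symmetry; the hard point).} Strict monotonicity $U_y>0$ follows by a sliding argument in $y$: the translates $U^\tau:=U(\cdot,\cdot+\tau)$ are ordered against $U$ for $\tau$ large by the barriers, and decreasing $\tau$ to $0$ using the strong maximum principle and the stability of the states $u\equiv\pm1$ preserves the order, whence $U_y\ge0$ and then $>0$ via the strong maximum principle on the linearized equation; the delicate part is the lack of uniformity of \eqref{limit-tw} in $x$, which I would circumvent by first showing $U$ is uniformly close to $\pm1$ outside a neighborhood of $\Gamma_0$ and running the sliding there. The radial decrease $U_r<0$ for $r>0$ follows similarly, by sliding in $r$ or by applying the maximum principle to $U_r$, which solves a linearized equation with the favorable sign of the $\tfrac{n-2}{r}$ coefficient forcing $U_r<0$ off the axis. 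The main obstacle throughout is Steps~1 and~2: producing ordered barriers sharp enough both near the tip, where the geometry of $\Gamma_0$ is most curved and the term $\tfrac{n-2}{r}U_r$ is singular, and in the far field, where they must reproduce the precise interface asymptotics \eqref{asym-2}, \eqref{asym-n} rather than merely a rough neighborhood.
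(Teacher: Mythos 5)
First, a point of orientation: Theorem A is not proved in this paper at all --- it is quoted from \cite{chen}, so there is no in-paper proof to match your sketch against. What the paper does contain (Subsection 2.4, Lemmas 2.7--2.8 and the surrounding analysis) is a reworking of the one ingredient of \cite{chen} that your proposal genuinely lacks, and that is where your sketch breaks down.

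The gap is in your treatment of the case $n=2$, in Steps 1 and 2. Your Step 1 selects the interface $\Gamma_0$ by balancing the curvature term $\Delta d$ against the drift $c\,d_y$ in the residual of the single-layer ansatz $U=g(d)$. For $n\ge 3$ this is the right mechanism (and it is why the front is asymptotically the translating paraboloid soliton \eqref{soliton}), but for $n=2$ it is not: the curvature--drift balance produces the grim reaper $y=\log\sec x$, whose two branches have \emph{finite} vertical asymptotes, not the hyperbolic cosine of \eqref{asym-2}, whose branches separate like $|x|\sim\frac{1}{2\mu}\ln y$. The paper points this discrepancy out explicitly and attributes it to the reaction term. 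Quantitatively, along the true front the curvature is $O(y^{-2})$ while the drift $c\,l'$ and the exponentially small layer--layer interaction $E_l\sim 2\be A e^{-2\mu l}$ are both of order $y^{-1}$; the shape of the level set in two dimensions is determined by the balance $c\,l'+l''=(A+o(1))e^{-2\mu l}$ (equation \eqref{single} here, following \cite{chen}), i.e.\ by the interaction between the \emph{two} transition layers, which the single-layer ansatz $g(d)$ cannot see. A barrier pair built on your Step 1 would therefore be modeled on the wrong curve, and the construction would not close in $n=2$. Relatedly, even if you grant yourself barriers that confine the $0$-level set to a bounded neighborhood of the correct $\cosh$ curve, this does not yield \eqref{asym-2}: since $x\sim\frac{1}{2\mu}\ln y$ on the front, an $O(1)$ horizontal error changes $\cosh(2\mu x)/(\mu y)$ by a bounded multiplicative factor, so trapping gives only two-sided bounds on that ratio, not convergence to a definite constant $A/c$. (For $n\ge 3$ the branches separate like $\sqrt{y}$, so an $O(1)$ error is harmless and your argument for \eqref{asym-n} is fine.) To get the actual limit in \eqref{asym-2} one must run the two-layer projection analysis --- decompose $u=\phi(l_1,l_2,\cdot)+v$, derive the coupled ODEs \eqref{ode1}--\eqref{ode2} for $l_1,l_2$ with the interaction term $E_l/\be$ on the right, estimate $\|v\|$, and integrate --- which is precisely what Subsection 2.4 of this paper and Section 6 of \cite{chen} do. Your Steps 2 and 3 (existence on balls between ordered barriers, passage to the limit, sliding in $y$ and in $r$ for monotonicity) are sound in outline and consistent with \cite{chen}, but the theorem's part (i) cannot be reached without supplying the interaction analysis.
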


It is very interesting to note that  for $n>2$  the traveling fronts
are very similar to the translating radial solutions to the mean
curvature flow, i.e., the entire radial solutions to
\begin{equation} \label{meancurvature}
div ( \frac{D\Gamma}{\sqrt{1+|D\Gamma|^2}})=\frac{1}{\sqrt{1+|D
\Gamma|^2}}\ \ in \ \Er^{n-1},
\end{equation}
where $y=\Gamma(|x|)$ can be computed as
\begin{equation}\label{soliton}
\Gamma(r)=\frac{r^2}{2(n-2)}-\ln r +C_1 -
\frac{(n-2)(n-5)}{2}r^{-2}+o(r^{-2}).
\end{equation}
See \cite{aag}, \cite{gjj}. This is not surprising due to the
connection between the surface motion by mean curvature and the
interface motion of solutions to the balanced  Allen-Cahn equation.
See, for example,  \cite{chen2}, \cite{schat}, \cite{ess}, \cite{ilmanen}.  It is reasonable to
expect that the traveling fronts with  unit speed  $c=1$ should be
related to the translating mean curvature flow  of unit speed.  The
case $n=2$ is slightly different, in this case the solution for the
translating mean curvature flow is the `` grim reaper '', i.e., the
curve given by $\Gamma(x)=\log{sec(x)}$, while the traveling front
is a hyperbolic cosine.  The discrepancy between  these two curves
is due to the strong interaction caused by  the reaction term in the
Allen-Cahn equation.

Recent studies  on the translating  mean curvature flow reveal very
interesting properties of convex solutions.  See \cite{white},
\cite{wang}, \cite{gjj} and references therein.  In particular, it
is shown in \cite{wang} that convex solutions to
\eqref{meancurvature} must be rotationally symmetric for $n\le 3$.
It is then natural to ask whether a traveling wave solution to
\eqref{allen-cahn} with monotone  \eqref{monotone} and limit
condition \eqref{limit-tw} must be rotationally symmetric, or, in
the terminology of this paper, axially symmetric after a proper
translation in $x$ variable.  In this paper, we shall show that this
is indeed true for $n=2$.  To be more precise,  we have the
following main theorem.

\begin{theorem} Assume that $F$ is a balanced double well potential satisfying
\eqref{doublewell} and $F(-1)=F(1)=0$.  Suppose $u$ satisfies
\eqref{eq-wave}, \eqref{monotone} and \eqref{limit-tw}. Then, when
$n=2$, $u$ is evenly symmetric with respect to $x$  after a proper
translation, and $ u_x(x, y) <0$ for $ x>0$.
\end{theorem}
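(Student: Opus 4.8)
The plan is to prove symmetry by a combination of the sliding/moving plane method in the $x$-variable and a Hamiltonian-type identity that controls the behavior of $u$ at infinity in $y$. First I would establish the key monotonicity and decay properties that make the moving plane method applicable: since $u_y>0$ and $u\to\pm1$ as $y\to\pm\infty$, a standard sliding argument in $y$ combined with the non-degeneracy of the one-dimensional profile $g$ shows that on any line $\{x=\mathrm{const}\}$ the function $u(x,\cdot)$ looks, up to translation, like $g$ near $y=\pm\infty$; in particular $u$ converges to $\pm1$ exponentially fast in $y$ with rate governed by $\mu=\sqrt{F''(1)}$ (respectively $\sqrt{F''(-1)}$), locally uniformly in $x$. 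Next I would record the Hamiltonian identity associated with \eqref{eq-wave} in the balanced case $c_0=0$: for a solution of $\Delta u - F'(u)=0$ (here the $cu_y$ term is what makes this delicate — see below), the quantity
\begin{equation}\label{ham-id-plan}
P(x):=\int_{-\infty}^{\infty}\Bigl[\tfrac12 u_y^2-\tfrac12 u_x^2-F(u)\Bigr]\,dy
\end{equation}
is, for the genuine stationary equation, independent of $x$; with the drift term $cu_y$ present one instead obtains a monotonicity/flux relation $P'(x)=-c\int u_x u_y\,dy$ after integrating by parts, which already forces sign information on $\int u_xu_y\,dy$.

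The heart of the argument is then the moving plane method in the $x$ direction. For $\lambda\in\Er$ set $u^\lambda(x,y):=u(2\lambda-x,y)$ and $w^\lambda:=u^\lambda-u$ on the half-plane $\Sigma_\lambda=\{x<\lambda\}$; $w^\lambda$ satisfies a linear uniformly elliptic equation $\Delta w^\lambda + c w^\lambda_y + c^\lambda(x,y)w^\lambda=0$ with $c^\lambda=-\int_0^1 F''(tu^\lambda+(1-t)u)\,dt$ bounded, and $w^\lambda=0$ on $\partial\Sigma_\lambda$. Using the exponential decay of $1\mp u$ established in the first step, together with the sign of $F''(\pm1)>0$ (so $c^\lambda<0$ near $y=\pm\infty$), one gets a maximum principle in the unbounded domain $\Sigma_\lambda$ for $\lambda$ very negative: $w^\lambda\le 0$ there. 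One then slides $\lambda$ to the right, letting $\bar\lambda=\sup\{\lambda: w^\mu\le0 \text{ in }\Sigma_\mu \ \forall \mu\le\lambda\}$, and argues $\bar\lambda<\infty$ using the same argument applied from $+\infty$ (i.e. $u\to1$, $u\to-1$ as $x\to\pm\infty$ cannot both hold unless... — actually here one uses that if $\bar\lambda=\infty$ then $u(x,y)\le u(x',y)$ for all $x\le x'$, contradicting, via \eqref{ham-id-plan}, the nontriviality forced by $u_y>0$ and the asymptotics). At $\lambda=\bar\lambda$ either $w^{\bar\lambda}\equiv0$, giving the desired even symmetry about $x=\bar\lambda$, or $w^{\bar\lambda}<0$ strictly (Hopf/strong maximum principle) and one pushes $\lambda$ slightly further, contradicting maximality; the sliding-past step is where one must combine an interior Harnack/compactness argument with the uniform-in-$x$ decay at $y=\pm\infty$ to rule out mass escaping to infinity. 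Once symmetry is known, $w^{\bar\lambda}<0$ in the open half-plane combined with Hopf at $\{x=\bar\lambda\}$ yields $u_x<0$ for $x>\bar\lambda=0$ (after translating so $\bar\lambda=0$).

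The main obstacle I expect is twofold. First, the drift term $cu_y$ breaks the exact Hamiltonian identity and also breaks the variational structure, so one cannot simply quote the balanced-case energy minimality of $g$; one must instead work with the flux-monotonicity form of \eqref{ham-id-plan} and show it is compatible only with $x$-symmetry. Second, and more seriously, the moving plane method in the full unbounded strip $\Er\times\Er$ requires a maximum principle for narrow-at-infinity domains, and here the domain $\Sigma_\lambda$ is not narrow — the usual trick is to exploit the exponential decay of $u\mp1$ in $y$ to build a supersolution of the form $e^{-\delta|y|}$ times a bounded function, which works precisely because $F''(\pm1)>0$; making this rigorous uniformly as $\lambda$ varies, and handling the transition region near $y=0$ where $c^\lambda$ may be positive, is the technical crux. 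The restriction to $n=2$ enters because for $n\ge3$ the level set is a paraboloid rather than a $\cosh$ curve, and the decay rates and the geometry of $\Sigma_\lambda$ in the transverse $x\in\Er^{n-1}$ directions no longer cooperate with this construction.
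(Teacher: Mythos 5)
Your overall strategy (exponential decay $+$ a flux/Hamiltonian identity $+$ moving planes in $x$) is the same as the paper's, and you have correctly located one technical difficulty (the maximum principle in the unbounded reflected domain near the transition region). But there is a genuine gap at the heart of the argument: you have no mechanism for \emph{stopping} the moving plane and locating the symmetry axis. The actual geometry here is that the $0$-level set is a U-shaped graph $y=\gamma(x)$ with $\gamma'(x)\to\pm\infty$ as $x\to\pm\infty$ (so $u\to-1$ as $x\to\pm\infty$ for each fixed $y$, and for large $y$ there are \emph{two} transition layers at $x=k_1(y)<k_2(y)$ which diverge like $\mp\frac{1}{2\mu}\ln y$). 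Your parenthetical "$u\to1$, $u\to-1$ as $x\to\pm\infty$ cannot both hold" suggests you have the wrong picture; establishing the correct one already requires the integrability $\iint u_y^2<\infty$ (from $h(x)=\int u_xu_y\,dy$, $h'=-c\int u_y^2\,dy$) and the $y$-direction Hamiltonian identity $\rho(y)-\rho(y_0)=c\int_{y_0}^{y}\!\!\int u_y^2$, which the paper uses to exclude the three other possible level-set configurations. (Your identity $P'(x)=-c\int u_xu_y\,dy$ is also not what the equation gives: differentiating your $P$ and substituting the PDE leaves an extra $\int F'(u)u_x\,dy$ term.)

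More seriously: because the two layers extend all the way to $y=+\infty$ and separate logarithmically, the set where the zeroth-order coefficient $c^\lambda$ has the wrong sign is unbounded, and at the critical value $\Lambda$ the infimum of $w^\Lambda$ could a priori be approached only along a sequence escaping to $y=+\infty$ along a layer — there your "supersolution $e^{-\delta|y|}$" is useless and Hopf's lemma cannot be invoked. The paper closes this by first proving the sharp asymptotics $k_i(y)=\mp\frac{1}{2\mu}\ln y+C_i+o(1)$ (its Lemma 2.7, proved by a two-layer ansatz $\phi(l_1,l_2,\cdot)$, an $L^2$-orthogonal decomposition $u=\phi+v$, coupled ODEs for $l_1,l_2$ with interaction term $E_l\sim e^{-2\mu l}$, and an estimate $\|v\|=O(y^{-1})$); this shows $k_1+k_2\to C_1+C_2$, identifies the symmetry axis as $\Lambda=(C_1+C_2)/2$ in advance, and confines the touching point of $w_\Lambda$ to a compact set so that the strong maximum principle and Hopf lemma apply. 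This asymptotic analysis is the bulk of the proof and is entirely absent from your proposal; without it (or a substitute of comparable precision) the sliding argument cannot be completed.
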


  In  dimensions $n \ge 3$, obviously we need  more conditions,
since if $u(x, y) $ is a solution in $\Er^n$,  then a trivial
extension $u(x,s, y)=u(x, y)$ is a solution in $\Er^{n+1}$.  It
remains open  whether all monotone traveling wave solutions with the
limit condition \eqref{limit-tw} must be either  axially symmetric
or trivial extension of a axially symmetric solution in lower
dimensional space.  Due to possible existence of non-rotationally
convex translating mean curvature flow in higher dimensions
(\cite{white}, \cite{wang}), the answer for the  above question is
probably not affirmative except for $n=3$.  The latter  will be
discussed in  a forthcoming paper  \cite{gui-hd}. We note that
 symmetry results have  also  been proven  for certain saddle
solutions of Allen-Cahn equation \eqref{allencahn-general} in
\cite{gui-saddle} and for solutions of nonlinear stationary
Schrodinger equation in \cite{gui-schro}.

A very closely related question is the De Giorgi conjecture, which
may be regarded as assertion on the one dimensional symmetry  of
solutions to \eqref{eq-wave} when $c=0$, i.e.,
\begin{equation}\label{allencahn-general}
\Delta u-F'(u)=0, \quad |u|<1, \quad (x, y)  \in \Er^n.
\end{equation}

The conjecture may be stated as follows.

\begin{conjecture*}(De Giorgi, 78)
If $u$ satisfies \eqref{allencahn-general} and \eqref{monotone},
then for at least $n \le 8$,  $u$ must be a one dimensional
solution, i.e. a proper trivial  extension, rotation  and
translation of $g$. In other words, the level sets of $u$ must be
hyper planes.
\end{conjecture*}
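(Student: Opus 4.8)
The plan is to prove the stronger geometric assertion that $\nabla u$ points in one fixed direction at every point; this forces the level sets to be parallel hyperplanes and exhibits $u$ as a rotation and translation of the one-dimensional heteroclinic $g$ (here with $c_0=0$, the balanced profile forced by $F(1)=F(-1)$). By \eqref{monotone} set $v:=u_y>0$. Differentiating \eqref{allencahn-general} shows that $v$ and each $w_i:=u_{x_i}$ ($1\le i\le n-1$) solve the same linearized equation $\Delta\xi=F''(u)\,\xi$. Forming the quotients $\sigma_i:=w_i/v$, a direct computation produces the divergence-form identity
\begin{equation}\label{gg-ratio}
\operatorname{div}\bigl(v^2\,\nabla\sigma_i\bigr)=0,\qquad 1\le i\le n-1 .
\end{equation}
If every $\sigma_i$ is constant, then in coordinates $(x_1,\dots,x_{n-1},y)$ one has $\nabla u=v\,(\sigma_1,\dots,\sigma_{n-1},1)$, a fixed direction, which is exactly one-dimensionality. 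The problem therefore reduces to a Liouville property for \eqref{gg-ratio}.

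The heart of the matter is the Liouville-type theorem: if $\sigma$ solves \eqref{gg-ratio} and obeys the growth bound $\int_{B_R}(v\sigma)^2\,dx\le C R^2$, then $\sigma$ is constant. The mechanism is to test \eqref{gg-ratio} against $\sigma\,\zeta^2$ with a logarithmic cutoff $\zeta$ equal to $1$ on $B_R$ and vanishing outside $B_{R^2}$; Cauchy--Schwarz gives $\int_{B_R}v^2|\nabla\sigma|^2\,dx\le 4\int(v\sigma)^2|\nabla\zeta|^2\,dx\to 0$ as $R\to\infty$. To supply the required growth I would establish the energy estimate
\begin{equation}\label{gg-energy}
\int_{B_R}\Bigl[\tfrac12|\nabla u|^2+F(u)\Bigr]\,dx\le C R^{\,n-1},
\end{equation}
using that a monotone solution is automatically a local minimizer of the energy (the translates $u(\cdot+t e_n)$ foliate $\Er^n$, \`a la Alberti--Ambrosio--Cabr\'e) and comparing $u$ on $B_R$ with a competitor built from the planar profile $g$, which carries energy $\be$ per unit interface area. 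Since $\int_{B_R}(v\sigma_i)^2=\int_{B_R}u_{x_i}^2\le\int_{B_R}|\nabla u|^2$, the bound \eqref{gg-energy} delivers exactly the growth control the Liouville theorem needs whenever $n-1\le 2$.

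Hence for $n=2$ (energy $\le CR$) and $n=3$ (energy $\le CR^2$, precisely the threshold) every $\sigma_i$ is constant, and the conclusion follows. The genuine obstacle is the range $4\le n\le8$: there \eqref{gg-energy} gives only $CR^{\,n-1}$, which overshoots the $CR^2$ budget of the linear argument, so the elementary energy method collapses. To reach $n=8$ I would discard the linear quotient and pass to the geometry of the level sets. Under the natural additional hypothesis $\lim_{y\to\pm\infty}u(x,y)=\pm1$, the $\Gamma$-convergence theory for the Allen--Cahn energy identifies blow-downs of the level sets with an area-minimizing boundary, and a further blow-down yields an area-minimizing cone. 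The classical Bernstein-type rigidity for such cones --- flatness through dimension $8$ (Simons; Bombieri--De Giorgi--Giusti), with the Simons cone in $\Er^8$ fixing the sharp threshold --- forces the blow-down to be a hyperplane. The hard and delicate step is the transfer back: one must quantify this flatness at every scale and run a De Giorgi improvement-of-flatness iteration (in the spirit of Savin) to promote flatness of the blow-down into genuine one-dimensionality of $u$. That $n=8$ is sharp, and the statement fails for $n\ge9$ (del Pino--Kowalczyk--Wei), is precisely why the conjecture reads ``for at least $n\le8$.''
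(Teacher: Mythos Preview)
The paper does not prove this statement: it is presented as the De Giorgi \emph{Conjecture}, serving only as background motivation. Immediately after stating it the paper simply records the state of the art --- $n=2$ in \cite{gg1}, $n=3$ in \cite{ac}, $n\le 8$ under the additional limit hypothesis \eqref{limit-tw} in \cite{savin}, and counterexamples for $n\ge 9$ in \cite{dkw2} --- without supplying any argument of its own. So there is no ``paper's proof'' to compare against.

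What you have written is a competent outline of exactly those literature results. The ratio $\sigma_i=u_{x_i}/u_y$, the divergence identity $\operatorname{div}(v^2\nabla\sigma_i)=0$, the Liouville step with the logarithmic cutoff, and the $O(R^{n-1})$ energy bound via local minimality are precisely the Ghoussoub--Gui / Ambrosio--Cabr\'e machinery that settles $n=2,3$. Your description of the $4\le n\le 8$ range --- blow-down to an area-minimizing cone, Simons/Bernstein rigidity, and Savin's improvement-of-flatness to transfer back --- is also accurate. One point to be explicit about: you yourself insert the extra hypothesis $\lim_{y\to\pm\infty}u=\pm1$ for the Savin argument, and indeed that hypothesis is \emph{not} part of the conjecture as stated. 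Without it, the cases $4\le n\le 8$ remain open to this day; your sketch therefore does not actually prove the conjecture in that range, only the weaker Savin theorem. This is not a flaw in your reasoning so much as a reflection of the fact that the full conjecture is still unresolved for $4\le n\le 8$.
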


This  conjecture is based on the famous Bernstein problem regarding
the classification of  complete minimal graph in $\Er^n$
(\cite{bdg}, \cite{giu}). The  De Giorgi conjecture is proven
affirmatively for $n=2$ in \cite{gg1} and for $n=3$ in \cite{ac}.
With the extra limit condition \eqref{limit-tw}, it is proven for $n
\le 8$ in \cite{savin}. Recently, non planar solutions for
\eqref{allencahn-general}  with $n \ge 9$ are constructed in
\cite{dkw2}  by using the non-planar minimal graph by Bombieri, De
Giorgi and Giusti (\cite{bdg}, \cite{giu}).

For the  case of an unbalanced double well potential, we shall show
a similar result as Theorem 1.1, which  improves  a classification
theorem of \eqref{hmr3} for all monotone traveling wave solutions in
$\Er^2$. See Theorem 3.2. in Section 3.

 The paper is organized as
follows.  In Section 2,  the main result Theorem 1.1 shall be
proved. Theorem 3.2., the classification result  for traveling wave
solutions of the unbalanced Allen-Cahn equation in $\Er^2$,  will be
proved in Section 3. Finally, traveling waves solutions connecting
various stationary one dimensional solutions will be investigated in
Section 4.

\section{Even Symmetry of Traveling Wave Solutions of the Balanced Allen-Cahn Equation in $\Er^2$}

Through out in this section,  we assume that  $n=2$ and  the double
well potential $F$ is balanced, i.e., $F(-1)=F(1)=0$.  We shall
prove Theorem 1.1 in three main steps.  First we carry out a
preliminary asymptotical analysis of the level sets of the solution
$u$ and show that the  slope of the  $0$-level curve $y=\gamma(x)$
must tend to $\pm \infty $ as $x$ tends to $\pm \infty.$  Second, we
show that $y=\gamma(x)$ is asymptotically  hyperbolic cosine and
obtain a very detailed asymptotical formula.  Last, we complete the
proof by  using the asymptotical formula  of the level curve and the
moving plane method.  We note that the regularity condition of $F$
can be replaced by $C^{2, \beta}$ with some $\beta \in (0, 1)$ for
most discussion below, except in \eqref{relation2} where the third
derivatives of $\phi$ with respect to $l_i$ require $F \in C^{3}$.

\subsection{Preliminary Analysis of the level set}

\vskip0.2in

We first show an important lemma which asserts the integrability of
of $u_y$.

\begin{lemma}
Suppose that $u$ is a solution to \eqref{eq-wave}, \eqref{monotone}
and \eqref{limit-tw}.  Then
\begin{equation}\label{u_y}
\int_{\Er^2} u_y^2 dxdy <\infty.
\end{equation}
\end{lemma}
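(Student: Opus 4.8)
The plan is to obtain the $L^2$ bound on $u_y$ from an energy estimate, exploiting the fact that $c > 0$ makes $u_y$ appear multiplied by $c$ after an integration by parts. The natural test function is $u_y$ itself. Working first on a truncated domain $B_R$ (or a rectangle $[-R,R]\times[-R,R]$), multiply equation \eqref{eq-wave} by $u_y$ and integrate over $B_R$:
\begin{equation*}
\int_{B_R} \bigl(\Delta_x u + u_{yy} + c\,u_y - F'(u)\bigr) u_y \, dx\, dy = 0.
\end{equation*}
The term $\int_{B_R} (\Delta u)\, u_y$ integrates by parts to $\int_{B_R} \partial_y\bigl(\tfrac12 |\nabla u|^2\bigr) + (\text{boundary terms})$, and the term $\int_{B_R} F'(u) u_y = \int_{B_R} \partial_y\bigl(F(u)\bigr) + (\text{boundary terms})$. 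Hence the interior contributions of these two terms combine into $\int_{B_R} \partial_y\bigl(\tfrac12 |\nabla u|^2 - F(u)\bigr) \, dx\, dy$, which is itself a divergence and therefore reduces to a boundary integral over $\partial B_R$. What survives in the interior is exactly $c \int_{B_R} u_y^2 \, dx\, dy$, which must therefore equal a sum of boundary terms on $\partial B_R$. Since $c > 0$, controlling the $L^2$ norm of $u_y$ reduces to bounding those boundary terms uniformly in $R$.

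Next I would control the boundary terms using elliptic estimates and the limit condition \eqref{limit-tw}. By standard interior elliptic regularity applied to the bounded solution $u$ with $|u|\le 1$, the gradient $\nabla u$ and second derivatives are uniformly bounded on $\Er^2$; in particular $|\nabla u|^2$ and $|F(u)|$ are uniformly bounded, so each boundary integral is $O(R)$ a priori — not good enough by itself. The key refinement is that as $|y|\to\infty$ the solution $u$ approaches $\pm 1$ and (again by elliptic estimates applied on unit balls centered far out, together with the non-degeneracy of the stable states $F''(\pm1)>0$) the quantities $|\nabla u|$ and $F(u)$ decay. One then chooses the truncation more carefully — for instance integrate over $[-R,R]\times\Er$ first, using that for fixed $x$ the one-dimensional energy $\tfrac12 u_y^2 + \tfrac12|u_x|^2 - F(u) \to 0$ along $y\to\pm\infty$ (this is the ODE analogue of the Hamiltonian identity, and here it is literally the statement that the ``energy'' is constant in $y$ up to the $x$-derivatives and the damping term), so the top and bottom boundary pieces vanish. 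This leaves only the two vertical sides $x = \pm R$, on which one needs $\int_{\Er} \bigl(|\nabla u|\,|u_y|\bigr)(\pm R, y)\, dy$ to stay bounded or tend to zero along a sequence $R_k\to\infty$.

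The main obstacle is precisely this last point: showing the lateral boundary flux does not blow up. A priori one only knows $\int_{\Er^2} u_y^2 = \lim_R c^{-1}(\text{fluxes})$, so one argues by contradiction / extraction: if $\int_{B_R} u_y^2$ were unbounded, then so would be the total flux, but one can bound $\int_{\Er}|u_x(x,y)|\,|u_y(x,y)|\,dy$ for each fixed $x$ by a constant independent of $x$ using the monotonicity \eqref{monotone} and a sliced energy argument, and then the set of $x$ for which the vertical-slice flux exceeds any given $\ve$ must be thin, forcing a subsequence $R_k$ along which the flux is small. Passing to the limit along $R_k$ then yields $\int_{\Er^2} u_y^2 \le C < \infty$, and since the partial integrals are monotone in $R$ the full integral is finite. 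I expect the technical heart to be the uniform-in-$x$ bound on the sliced energy flux, where the decay of $u - (\pm 1)$ as $y\to\pm\infty$ (non-uniform in $x$, as the authors emphasize) has to be handled with care, likely via comparison with the one-dimensional profile $g$ and the non-degeneracy of $g$.
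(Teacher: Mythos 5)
Your proposal is essentially the paper's argument: the paper tests the equation against $u_y$ in the guise of the lateral flux $h(x)=\int_{\Er} u_x u_y\,dy$, computes $h'(x)=-c\int_{\Er} u_y^2\,dy$ (the top and bottom boundary contributions vanishing exactly as you describe, since $F(\pm1)=0$ and $\nabla u\to 0$ as $y\to\pm\infty$), and concludes from a bound on $h$ uniform in $x$. The one step you flag as the ``technical heart'' is in fact immediate and needs neither comparison with $g$ nor any subsequence extraction: by \eqref{monotone} and \eqref{limit-tw} one has $\int_{\Er} u_y\,dy=u(x,+\infty)-u(x,-\infty)=2$ for every $x$, so $|h(x)|\le 2\sup|u_x|\le C$ by the global gradient bound from elliptic estimates, whence $\int_{-R}^{R}\int_{\Er}u_y^2\,dy\,dx=c^{-1}\bigl(h(-R)-h(R)\bigr)\le 2C/c$ uniformly in $R$.
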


\begin{proof}

Define
$$
h(x)=\int_{\Er} u_x u_y dy, \quad x \in \Er.
$$
Since $u$ is bounded in $C^3(\Er^n)$ by the standard elliptic
estimates and $u_y$ is positive,  it is easy to see that $h(x)$ is
well-defined and
$$
|h(x)|< C, \quad x \in \Er
$$
for some constant $C>0$.

Note that due to \eqref{limit-tw}, we have
$$ \lim_{y \to \pm \infty} u_x=0, \quad \lim_{y \to \pm \infty} u_y=0,
\quad x \in \Er.
$$

Differentiating $h(x)$ with respect to $x$ and using the equation,
we obtain
\begin{equation}\label{u_y1}
\begin{split}
h'(x)=&\int_{\Er} ( u_{xx}u_y + u_x u_{xy}) dy \\
=&\int_{\Er}  [\frac{\partial}{\partial y} \bigl(F(u)-\frac{1}{2} u_{y}^2  +\frac{1}{2} u_x^2  \bigr)-c u_y^2 ]dy\\
=& -c\int_{\Er} u_y^2dy.
\end{split}
\end{equation}
Then
\begin{equation}\label{u_y2}
\int_{a}^{b} \int_{\Er} u_y^2dydx =\frac{1}{c} \bigl(h((a)-h(b)
\bigr).
\end{equation}

The bound of $h(x)$  immediately leads to the integrability of
$u_y^2$ in $\Er^2$.
\end{proof}

 Due to \eqref{monotone} and \eqref{limit-tw},  the 0-level set of $u$
is a $C^3$  graph of a function defined in $\Er$.   We let $
y=\gamma(x), x \in \Er $ be such a function.  The next lemma asserts
that the slope of $y=\gamma(x)$ must tend to infinity as $x $ goes
to infinity.

\begin{lemma}\label{slope1}  There holds
\begin{equation}\label{slope}
\lim_{ |x| \to \infty} | \gamma'(x)|=\infty.
\end{equation}
\end{lemma}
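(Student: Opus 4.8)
The plan is to argue by contradiction: suppose along some sequence $x_k \to +\infty$ (the case $x_k \to -\infty$ is symmetric) the slopes $\gamma'(x_k)$ stay bounded, say $|\gamma'(x_k)| \le M$. The heuristic is that near such a point the level curve is nearly flat, so the solution looks, on a bounded neighbourhood, like a vertical one‑dimensional profile; and by the standard $C^3$ elliptic bounds together with monotonicity \eqref{monotone} and the limits \eqref{limit-tw}, we can extract a limiting entire solution on $\Er^2$ whose $0$-level set has bounded slope at the origin. The first step is therefore a compactness/blow‑down‑free translation argument: set $u_k(x,y) = u(x + x_k,\, y + \gamma(x_k))$, use interior Schauder estimates to pass to a subsequential $C^2_{loc}$ limit $u_\infty$ solving \eqref{eq-wave} on $\Er^2$ with $u_\infty(0,0)=0$, $\partial_y u_\infty \ge 0$, and $|u_\infty|\le 1$.

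The key analytic input is the energy bound of Lemma 2.1, namely $\int_{\Er^2} u_y^2\,dx\,dy < \infty$. Translating, $\int_{\Er^2}(\partial_y u_k)^2 < \infty$ with the \emph{same} bound, and since the translates run off to $x = +\infty$, the tails of the integral are being shed: I expect $\int_{B_R}(\partial_y u_\infty)^2\,dx\,dy = 0$ for every $R$ by a standard "vanishing at infinity of the tail of a convergent integral" argument (the integral of $u_y^2$ over the strip $\{|x - x_k| \le R\}$ tends to $0$ as $k\to\infty$ because $h(x)$, being bounded and monotone nonincreasing by \eqref{u_y1}–\eqref{u_y2}, has a limit at $+\infty$, so $h(x_k - R) - h(x_k + R) \to 0$). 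Hence $\partial_y u_\infty \equiv 0$, i.e. $u_\infty = u_\infty(x)$ depends on $x$ alone and is a bounded solution of $u_\infty'' = F'(u_\infty)$ on $\Er$ with $u_\infty(0) = 0$. Now I invoke the structure of $F$: any bounded entire solution of $u'' = F'(u)$ on $\Er$ with $|u| \le 1$ must, by a phase‑plane / conserved‑quantity analysis ($\tfrac12 (u')^2 - F(u) = \text{const}$, together with $F(\pm 1) = 0$, $F > 0$ on $(-1,1)$ in the balanced case, $F''(\pm1)>0$), be either a constant $\pm 1$ or a shifted copy of the heteroclinic $g$; but $g' > 0$ strictly, contradicting $\partial_x u_\infty$ having to be compatible with a horizontal profile that also came from a graph $y = \gamma(x)$ of bounded slope — more precisely, $u_\infty(0)=0$ forces $u_\infty \equiv g(\cdot - a)$ for some shift, whose only zero level set in the $(x,y)$‑plane is a vertical line $\{x = a\}$, i.e. slope $\gamma' \equiv \infty$, not bounded. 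This is the contradiction.

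I would organise the write‑up as: (1) translation and Schauder compactness to get $u_\infty$; (2) the tail argument using monotonicity and boundedness of $h$ from Lemma 2.1 to kill $\partial_y u_\infty$; (3) the ODE classification of $u_\infty$; (4) extracting the contradiction with the bounded‑slope assumption by tracking how the $0$-level curve of $u_k$ converges. One technical point to be careful about in step (4): the assumption "$|\gamma'(x_k)| \le M$" must be translated into a statement about $u_k$ near the origin — namely that the implicit‑function description of the zero set of $u_k$ near $(0,0)$ has bounded slope, which follows since $\partial_y u_k(0,0) = u_y(x_k, \gamma(x_k))$ is bounded below away from $0$ along a further subsequence (if it tended to $0$ one argues separately, but in fact $\partial_y u_k(0,0)\,\gamma'(x_k) = -\partial_x u_k(0,0)$ and all three stay bounded) — passing to the limit, the zero set of $u_\infty$ near the origin is a $C^1$ graph of slope $\le M$, which is impossible for $u_\infty = g(x-a)$ or $u_\infty \equiv \pm 1$.

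The main obstacle I anticipate is step (2), making rigorous that the limiting solution has $\partial_y u_\infty \equiv 0$: one must be sure that the local energy $\int_{B_R}(\partial_y u_k)^2$ genuinely vanishes in the limit, which relies precisely on the monotone convergence of $h(x)$ as $x\to+\infty$ extracted from \eqref{u_y1} and the a priori bound $|h| < C$; and one must handle the borderline scenario where $\partial_y u_k(0,0) \to 0$, so that the zero level set could degenerate — there one uses that $\partial_y u > 0$ everywhere plus the limit conditions \eqref{limit-tw} to rule out $u_\infty \equiv$ const, forcing again the heteroclinic. A secondary subtlety is that \eqref{limit-tw} is not uniform in $x$, so one cannot directly claim the limits $u_\infty(\pm\infty) = \pm1$; fortunately the ODE classification only needs boundedness and a single interior zero, so uniformity is not required.
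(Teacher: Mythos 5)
Your steps (1) and (2) are sound, and the tail argument in (2) --- using that $h$ is bounded and monotone nonincreasing by \eqref{u_y1}, so that $h(x_k-R)-h(x_k+R)\to 0$ and hence $\int_{B_R}(\partial_y u_\infty)^2=0$ --- is a clean alternative to the paper's route (which instead deduces from Lemma 2.1 that $u_y\to 0$ uniformly as $|x|\to\infty$). The gap is in steps (3)--(4). First, your classification of bounded entire solutions of $u''=F'(u)$ with $|u|\le 1$ is incomplete: besides $\pm1$ and translates of the heteroclinic $g$, the phase plane $\tfrac12(u')^2-F(u)=E$ contains the constant solution $u\equiv 0$ (recall $F'(0)=0$ after the normalization $\theta=0$, and $F$ attains its maximum on $[-1,1]$ at $0$) together with a one-parameter family of periodic orbits around it. Second --- and this is where the argument actually fails --- the limit falls precisely into the case you omitted. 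By your own step (2), $\partial_y u_k(0,0)=u_y(x_k,\gamma(x_k))\to 0$, and then $u_x(x_k,\gamma(x_k))=-u_y(x_k,\gamma(x_k))\,\gamma'(x_k)\to 0$ because the slopes are assumed bounded; hence $u_\infty(0)=0$ and $u_\infty'(0)=0$, which forces $E=-F(0)=-\max F$ and therefore $u_\infty\equiv 0$. So your parenthetical claim that $\partial_y u_k(0,0)$ stays bounded away from $0$ along a subsequence is false, $\nabla u_\infty(0,0)=0$ so there is no implicit-function graph of the zero set near the origin, and the ``vertical line versus bounded slope'' contradiction never fires. You flagged that the degenerate case would need a separate argument, but that separate argument is the whole content of the lemma.

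To close the proof one needs an additional ingredient excluding convergence to the unstable constant state $0$. The paper does this with an energy-per-unit-length estimate: setting $h_m(x)=\int_{-\infty}^0 \partial_x u_m\,\partial_y u_m\,dy$, which is bounded uniformly in $x$ and $m$, differentiating, integrating over $[-R,R]$ and using \eqref{u_y}, one gets
$\int_{-R}^{R}\bigl[\tfrac12(\partial_x u_m)^2(x,0)-\tfrac12(\partial_y u_m)^2(x,0)+F(u_m(x,0))\bigr]dx<C$
uniformly in $m$ and $R$; letting $m\to\infty$ with $u_m\to 0$ in $C^1_{loc}$ yields $2F(0)R\le C$, absurd for large $R$ since $F(0)>0$. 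Your proposal contains no substitute for this step, so as written it does not establish \eqref{slope}.
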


\begin{proof}
Since $u$ is bounded in $C^3(\Er^2)$,  Lemma 2.1 implies that
$$
\lim_{|x| \to \infty} u_y(x, y)=0, \quad \text{uniformly in } y \in
\Er.
$$

Now assume that \eqref{slope} is not true,  then there exists a
sequence $\{ x_m\} $ such that $ |x_m| $ goes to infinity and
$$
\lim_{m \to \infty} \gamma'(x_m) =k_0
$$
for some constant $k_0$.

We shall translate $u$ along this sequence of $x_m$.  Define
$$
u_m(x, y)= u( x+x_m, y+\gamma(x_m)), \quad  (x, y) \in \Er^2.
$$

By the standard theory for elliptic equations, we know that $u_m$ is
bounded in $C^{3, \beta} (\Er^2)$. Then there is a subsequence,
which we still denote by $\{x_m\}$, such that $u_m $ converges to a
function $u_*$ in $C^3_{loc} (\Er^2)$.  It is easy to see that
 $ u_*(0, 0)=0, \,\,  \frac{\partial} {\partial y} u_*(x, y) =0,\,
(x, y) \in \Er^2. $  Then $u_*(x, y)= g_*(x)$ for some $C^3$
function $g_*$ which  is a solution to the one dimensional
stationary Allen-Cahn equation
\begin{equation}\label{allencahn-1d}
u_{xx}-F'(u)=0, \quad x \in \Er.
\end{equation}
Furthermore, since $u(x, \gamma(x))=0$ and hence
$$ u_x( x,
\gamma(x))+ u_y(x, \gamma(x)) \gamma'(x)=0, \quad x \in \Er,
$$
we obtain
$$
g_*'(0)=\lim_{m \to \infty}  u_x(x_m, \gamma(x_m))= -\lim_{n \to
\infty} u_y(x_m, \gamma(x_m)) \gamma'(x_m)=0.
$$
Then we conclude that $g_* \equiv 0$.  We claim that this will lead
to a contradiction.

As in the proof of Lemma 2.1,  we define
$$
h_m(x)=\int_{-\infty}^0 \frac{\partial u_m}{\partial
x}\frac{\partial u_m}{\partial y } dy.
$$
It is easy to see that $|h_m(x)|<C$ for some constant independent of
both  $ x $ and $ m$. We can  also derive

\begin{equation*}
h'_m(x)=-c \int_{-\infty}^0 (\frac{\partial u_m}{\partial y})^2 dy +
\frac{1}{2} (\frac{\partial u_m}{\partial x})^2 (x, 0) -\frac{1}{2}
(\frac{\partial u_m}{\partial y})^2 (x, 0)+F( u_m(x, 0)), \quad x
\in \Er
\end{equation*}

For any fix $R>0$,  in view of \eqref{u_y} we have
\begin{equation*}
\int_{-R}^R \bigl[\frac{1}{2} (\frac{\partial u_m}{\partial x})^2
(x, 0) -\frac{1}{2} (\frac{\partial u_m}{\partial y})^2 (x, 0)+F(
u_m(x, 0))]\bigr] dx<C
\end{equation*}
for some constant $C$ independent of $m, R$.

Letting $m$ go to infinity, we obtain $ 2 F(0) R \le C $,  which is
a contradiction.  The proof  of the lemma is then  complete.

\end{proof}

Indeed, we conclude that  the level curve must be of one of the
following four possibilities:

\begin{equation*}
\begin{split}
& \hskip-1.5in \text{\bf (i) } \quad \quad  \lim_{ x \to \infty}
\gamma'(x) =+\infty, \quad\quad \lim_{ x \to -\infty} \gamma'(x) = -\infty; \\
& \hskip-1.5in \text{\bf (ii)} \quad \quad  \lim_{ x \to \infty}
\gamma'(x) =+\infty, \quad  \quad\lim_{ x \to -\infty} \gamma'(x) = +\infty;\\
&\hskip-1.5in \text{\bf (iii)} \quad \quad \lim_{ x \to \infty}
\gamma'(x) =-\infty, \quad \quad\lim_{ x \to -\infty} \gamma'(x) = -\infty; \\
& \hskip-1.5in \text{\bf (iv) } \quad \quad \lim_{ x \to \infty}
\gamma'(x) =-\infty, \quad\quad  \lim_{ x \to -\infty} \gamma'(x)
=+\infty.
\end{split}
\end{equation*}

Moreover, it can also be concluded  by the arguments above  that the
profile of $u$ along the level curve  must be approximately the one
dimensional transition layer  $g(x)$ or $g(-x)$. To be more precise,
 we define
$$ u_s(x, y): =u(s+x, \gamma(s)+y), \quad (x, y) \in \Er^2.
$$
The following  lemma holds.

\begin{lemma}\label{profile}
The translated solution $u_s(x, y)$ converges in $C^3_{loc} (\Er^2)$
to either $g(x)$ or $g(-x)$ as $|s|$ tends to infinity.
\end{lemma}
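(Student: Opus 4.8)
The plan is to rerun the argument from the proof of Lemma~\ref{slope1}, this time \emph{without} the hypothesis that the slopes stay bounded; the absence of that hypothesis is exactly what permits the limiting one--dimensional profile to be a heteroclinic rather than the constant $0$. Fix a sequence $s_m$ with $|s_m|\to\infty$ and set $u_m:=u_{s_m}$. By the standard elliptic estimates $u_m$ is bounded in $C^{3,\beta}(\Er^2)$, so after passing to a subsequence $u_m\to u_*$ in $C^3_{loc}(\Er^2)$, where $u_*$ solves $\Delta u_*+c\,\partial_y u_*-F'(u_*)=0$, $|u_*|\le1$, and $u_*(0,0)=0$ since $(s_m,\gamma(s_m))$ lies on the $0$--level set. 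As in the proof of Lemma~\ref{slope1}, the integrability \eqref{u_y} and the uniform $C^3(\Er^2)$--bound on $u$ force $u_y(x,y)\to0$ uniformly in $y$ as $|x|\to\infty$; since $\partial_y u_m(x,y)=u_y(s_m+x,\gamma(s_m)+y)$ and $|s_m+x|\to\infty$ locally uniformly, this gives $\partial_y u_*\equiv0$. Hence $u_*(x,y)=g_*(x)$, with $g_*$ a solution of $g_*''-F'(g_*)=0$ satisfying $|g_*|\le1$ and $g_*(0)=0$.

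The core of the argument is to identify $g_*$. In the phase plane of $g''=F'(g)$ the orbits that remain in $[-1,1]$ are: the equilibria $0,\pm1$; the periodic orbits contained in $(-1,1)$; and the heteroclinics joining $-1$ and $1$, which by uniqueness (up to translation) of $g$ are exactly the functions $g(x-a)$ and $g(a-x)$, $a\in\Er$. The condition $g_*(0)=0$ rules out $g_*\equiv\pm1$. To rule out $g_*\equiv0$ and the periodic orbits I would repeat the $h_m$--computation of Lemma~\ref{slope1}: with $h_m(x)=\int_{-\infty}^0 \partial_x u_m\,\partial_y u_m\,dy$ one has $|h_m|\le C$ and
\[
\int_{-R}^{R}\Bigl[\tfrac12(\partial_x u_m)^2-\tfrac12(\partial_y u_m)^2+F(u_m)\Bigr](x,0)\,dx=h_m(R)-h_m(-R)+c\int_{-R}^{R}\!\int_{-\infty}^{0}(\partial_y u_m)^2\,dy\,dx,
\]
and the change of variables $(x,y)\mapsto(s_m+x,\gamma(s_m)+y)$ bounds the last term by $c\int_{\Er^2}u_y^2$, so the right--hand side is bounded by a constant $C'$ independent of $m$ and $R$. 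Letting $m\to\infty$ gives $\int_{-R}^{R}\bigl(\tfrac12(g_*')^2+F(g_*)\bigr)\,dx\le C'$ for every $R$. For $g_*\equiv0$ the integrand is the positive constant $F(0)$; for a periodic orbit in $(-1,1)$ the integrand $\tfrac12(g_*')^2+F(g_*)$ is continuous, periodic, and strictly positive (it would vanish only where $g_*'=0$ and $g_*=\pm1$ simultaneously, which cannot happen on such an orbit). In either case the integral over $[-R,R]$ diverges as $R\to\infty$, a contradiction. Hence $g_*$ is a heteroclinic through $0$, and since $g$ is strictly increasing with $g(0)=0$ we conclude $g_*=g(x)$ or $g_*(x)=g(-x)$.

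It remains to upgrade subsequential convergence to a genuine limit as $s\to+\infty$ (and, separately, as $s\to-\infty$). Here I would invoke Lemma~\ref{slope1} again: $\gamma'$ has a fixed sign on $[R,\infty)$ (resp.\ on $(-\infty,-R]$) for $R$ large, and differentiating $u(x,\gamma(x))=0$ gives $\partial_x u(s,\gamma(s))=-u_y(s,\gamma(s))\,\gamma'(s)$, whose sign for large $|s|$ is the opposite of $\sgn\gamma'(s)$. Passing to the limit, $g_*'(0)=\lim_m\partial_x u(s_m,\gamma(s_m))$ carries that sign, while $|g_*'(0)|=\sqrt{2F(0)}\neq0$; hence $g_*=g(x)$ or $g_*(x)=g(-x)$ according to the sign of $\gamma'$ on the relevant end, independently of the subsequence. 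I expect the main obstacle to be the classification step: unlike in Lemma~\ref{slope1}, one must now exclude \emph{every} periodic one--dimensional orbit in addition to the constant $0$, and the device that handles both at once is the uniform--in--$R$ energy bound, which itself rests on showing that $h_m$ and the double integral $\int_{-R}^{R}\int_{-\infty}^{0}(\partial_y u_m)^2$ are bounded uniformly in both $m$ and $R$.
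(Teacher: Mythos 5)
Your proposal is correct and follows exactly the route the paper intends: the paper states Lemma~\ref{profile} without proof, deferring to ``the arguments above'' from Lemma~\ref{slope1}, and your write-up is precisely that argument (translation along the level curve, compactness, $\partial_y u_*\equiv 0$ from the uniform decay of $u_y$, and the $h_m$ identity giving a uniform-in-$R$ energy bound that kills the constant $0$ \emph{and} the periodic orbits, leaving only the heteroclinics). The additional steps you supply --- ruling out every periodic orbit rather than just $g_*\equiv 0$, and pinning down the sign of $g_*'(0)$ via $\sgn\gamma'$ to upgrade subsequential to full convergence on each end --- are exactly what is needed and are consistent with the paper.
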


\subsection{ The exponential decay of $u$ and the  Hamiltonian identity}

\vskip0.2in

 In this subsection,  we shall show that  solution $u$ must decay exponentially to
$\pm 1$ as the distance from the the level set $y=\gamma(x)$ tends
to infinity.  The exponential decay of $u$ will be used to prove  a
version of Hamiltonian identity for equation \eqref{eq-wave}.   This
type of  analysis was first  carried out  in \cite{chen} for the
axially symmetric traveling wave solutions. Their arguments are
slightly modified and presented here for the convenience of the
reader.

 Due to the double well potential condition of $F$,
 there exist two constants $\alpha^+, \alpha^-$ such that
 $-1< \alpha^-<0< \alpha^+ <1$ and
 $$
 F''(s)>\mu_0>0, \quad  s \in [-1, \alpha^-]\cup [\alpha^+, 1].
 $$
 for some constant $\mu_0>0$.

Define
\begin{equation*}
\begin{split}
& \Omega^+:= \{ (x, y)  \in \Er^2:   u(x, y) \ge \alpha^+\},  \quad
\Omega^-:= \{ (x, y) \in \Er^2:   u(x, y) \le \alpha^-\},\\
& \Omega^0:= \{ (x, y)  \in \Er^2:   \alpha^- \le u(x, y) \le
\alpha^+\}, \quad \Omega^0_y:= \{ x  \in \Er:   \alpha^- \le u(x, y)
\le
\alpha^+\},\\
&\gamma^\alpha:=\{ (x, y) \in \Er^2: y=\gamma^\alpha(x), \,\, u(x,
\gamma^\alpha(x) )=\alpha \}, \quad  \alpha \in (-1, 1) .
\end{split}
\end{equation*}

By Lemmas \ref{slope1} and \ref{profile}, it is easy to see that $
meas (\Omega_y^0) < K <\infty$ for some constant $K$ independent of
$y$.  Indeed, there exists  a positive constant $Y_0>0$  and two
$C^3$  functions $ x=k_i(y),  i=1, 2 $    such that $ \gamma^0 \cap
\{ (x, y) \in \Er^2: |y|> Y_0\}$ can be expressed as the graph of
$k_i(y)$, i.e.,
$$
\gamma^0 \cap \{ (x, y) \in \Er^2: |y|> Y_0\}=\{ (x, y): x=k_i(y),\,
|y| > Y_0, \, i=1, 2\}.
$$

In Case (i),  both $k_1$ and $k_2$ are defined for $Y>Y_0$,  while
in Case (iv),  $k_1$ and $k_2$ are defined for $Y<-Y_0$. We may
assume that $k_1(y)<k_2(y)$ in these two cases.

 In Case (ii) and
(iii),  $k_1$ is defined for $y>Y_0$ and $k_2$ is defined for
$y<-Y_0$.

In all cases, we have
\begin{equation}\label{width}
|x-k_1(y)| <  K,  \,\, \text{or}  \, \, |x-k_2(y)| <  K, \,  \,
\quad
 \forall x \in \Omega_y^0, \,\, |y|> Y_0.
 \end{equation}

Now we can state the exponential decay of $u$ as the following
lemma.

\begin{lemma}\label{exponential1}
There exist  constants $C$ and $\nu>0$ such that
\begin{equation}\label{exponential}
\left\{
\begin{split}
&|u^2-1|+|\nabla u|+|\nabla^2 u| \le C e^{ -\nu d(x, y)} , \quad  |y|>Y_0\\
&|u^2-1|+ |\nabla u|+|\nabla^2 u| \le C e^{-\nu |x| }, \quad |y|\le
Y_0.
\end{split}
\right.
\end{equation}
where $d(x, y):=\min\{ |x-k_1(y)|, \,\, |x-k_2(y)|\}$ for $
|y|>Y_0$.

\end{lemma}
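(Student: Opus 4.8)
The plan is to establish the exponential decay by a comparison (barrier) argument near the transition layer, using the non-degeneracy of the one-dimensional profile $g$ and the uniform control on the width $\meas(\Omega_y^0) < K$ provided by \eqref{width}. First I would note that by standard elliptic estimates $u$ is bounded in $C^{3,\beta}(\Er^2)$, so it suffices to prove the decay estimate for $|u^2-1|$ alone; the bounds on $|\nabla u|$ and $|\nabla^2 u|$ then follow from interior Schauder estimates applied on unit balls centered at $(x,y)$, since the right-hand side of \eqref{exponential} only changes by a fixed multiplicative constant when the center moves by a bounded amount. So the heart of the matter is: away from the layer (where $u \ge \alpha^+$ or $u \le \alpha^-$), $1-u^2$ decays exponentially in the distance to the layer.

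Next I would set up the barrier. Work in the region $\Omega^+$ where $u \ge \alpha^+$, so that $w := 1 - u \in [0, 1-\alpha^+]$ satisfies
\begin{equation*}
\Delta w + c w_y = F'(u) = F'(1) - F''(\xi) w = -F''(\xi) w
\end{equation*}
for some $\xi$ between $u$ and $1$, hence $\Delta w + c w_y - F''(\xi) w = 0$ with $F''(\xi) \ge \mu_0 > 0$ on $\Omega^+$ by the choice of $\alpha^+$. Thus $w$ is a subsolution of the linear operator $L := \Delta + c\partial_y - \mu_0$: indeed $Lw = (F''(\xi) - \mu_0)w \ge 0$... one must be slightly careful with signs, but $w \ge 0$ and $\mu_0 \le F''(\xi)$ give $\Delta w + cw_y = F''(\xi)w \ge \mu_0 w$, i.e. $Lw \ge 0$, so $w$ is an $L$-subsolution. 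Now I would build a supersolution of the form $\bar w(x,y) = C e^{-\nu\, \mathrm{dist}((x,y), \partial\Omega^+)}$, or more conveniently a product/sum of one-dimensional exponentials $C(e^{-\nu(x - k_2(y))} + e^{\nu(x - k_1(y))})$ adapted to the two branches of the layer, choosing $\nu$ small enough (depending on $\mu_0$, $c$, and a uniform bound on $|k_i'|, |k_i''|$ — but note $k_i'$ need NOT be bounded here, which is the subtlety). Because the $k_i$ are expressed as graphs $x = k_i(y)$ precisely in the regime $|y| > Y_0$ where $\gamma'$ is large, the curves are nearly vertical, so derivatives of $x - k_i(y)$ in the $y$-direction are small and the computation $L\bar w \le 0$ goes through for suitable $\nu$. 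On the boundary $\partial\Omega^+ = \gamma^{\alpha^+}$ one has $w = 1 - \alpha^+ \le \bar w$ by taking $C$ large, and at infinity $w$ is bounded while $\bar w$ could fail to dominate only where $d$ is bounded — but there $\bar w$ is bounded below by a positive constant, so enlarging $C$ handles it. The maximum principle on the unbounded domain $\Omega^+$ (justified since $w$ is bounded and $L$ has a negative zeroth-order term, so the Phragmén–Lindelöf principle applies) yields $w \le \bar w$, i.e. $1 - u \le C e^{-\nu d(x,y)}$. The region $\Omega^-$ is handled symmetrically with $w = 1 + u$, and in the strip $|y| \le Y_0$ the estimate $\meas(\Omega_y^0) < K$ shows the layer stays within bounded $x$, so $d(x,y)$ and $|x|$ are comparable up to an additive constant and the same barriers apply.

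The main obstacle I anticipate is the construction of the barrier function near the layer when $|\gamma'|$ is large: one needs the supersolution $\bar w$ to be genuinely $L$-superharmonic in a full neighborhood of the (curved, steep) layer, and to dominate $w$ not just asymptotically but right down to the level set $\gamma^{\alpha^+}$. Using the graphs $x = k_i(y)$ rather than $y = \gamma(x)$ is exactly what makes this tractable — the steepness of $\gamma$ becomes near-verticality of $k_i$, so $|k_i'(y)|$ is small for $|y| > Y_0$ (this should be quantified: $k_i'(y) = 1/\gamma'(k_i(y)) \to 0$ by Lemma~\ref{slope1}), and second derivatives are likewise controlled by elliptic estimates on $u$ together with the implicit function theorem. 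Once that geometric control is in hand, the barrier computation for $L(e^{-\nu(x-k_i(y))})$ reduces to checking $\nu^2(1 + (k_i')^2) + c\nu k_i' - \nu^2 k_i'' - \mu_0 < 0$ pointwise, which holds for all small $\nu$. A secondary technical point is the rigorous application of the maximum principle on the unbounded components of $\Omega^\pm$; this is standard given the uniform bound $|u| \le 1$ and the strictly negative zeroth-order coefficient, so I would invoke it without belaboring the details. I expect this argument to follow \cite{chen} closely, as the excerpt indicates.
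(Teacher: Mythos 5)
Your proposal is correct in its core mechanism and shares the paper's key idea: on $\Omega^{\pm}$ the function $w=1\mp u$ is a nonnegative subsolution of $L=\Delta+c\partial_y-\mu_0$ because $F''\ge\mu_0$ near $\pm1$, and exponential decay follows by comparison with an explicit supersolution. Where you diverge is in the barrier and the comparison domain. You build a single global supersolution adapted to the graphs $x=k_i(y)$ and invoke a Phragm\'en--Lindel\"of maximum principle on the unbounded sets $\Omega^{\pm}$; this forces you to control $k_i'$ (small, via Lemma \ref{slope1}) and $k_i''$ (bounded, via the profile convergence of Lemma \ref{profile} which keeps $u_x$ away from zero on the level set), and to patch the regime $|y|\le Y_0$ separately. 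The paper instead compares $w$ with the purely local barrier $B(x,y)=4e^{-\mu_2R-cy/2}\cosh(\mu_1y)\cosh(\mu_2x)$, with $\mu_{1,2}=\bigl(\pm c+\sqrt{c^2+8\mu_0}\bigr)/4$, on a rectangle of side $R=\mathrm{dist}\bigl((x_0,y_0),\Omega^0\bigr)$ centered at each point; since $B\ge1\ge w$ on the rectangle's boundary and $LB=0$ inside, the ordinary maximum principle on a bounded domain gives $w(x_0,y_0)\le4e^{-\mu_2R}$, and only at the very end is $R$ related to $d(x,y)$ (or to $|x|$) through a constant $\mu_3$ coming from \eqref{slope} and \eqref{width}. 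This local version buys exactly what you identified as the two main obstacles: it needs no derivative bounds on the level curves and no unbounded-domain maximum principle. Your route is viable but carries that extra technical freight, and you should fix two small slips: the supersolution test for $e^{-\nu(x-k_i(y))}$ reads $\nu^2\bigl(1+(k_i')^2\bigr)+c\nu k_i'+\nu k_i''-\mu_0\le0$ (the $k_i''$ term enters linearly in $\nu$, with sign depending on the branch), and the combination $e^{-\nu(x-k_2)}+e^{\nu(x-k_1)}$ you wrote decays outside the two branches, so it is the barrier for $\Omega^-$; for $\Omega^+$, which lies between the branches, you want $e^{-\nu(x-k_1)}+e^{\nu(x-k_2)}$.
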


\begin{proof}

Let
$$ w(x, y)= 1\mp u(x, y)>0,  \,\,  (x, y) \in \Omega^{\pm}.
$$
Then, by the definition of $\mu_0$ and $\Omega^{\pm}$, it is easy to
see that
$$
w_{xx}+w_{yy}+cw_y-\mu_0 w=  (\frac{F'(\pm1)-F'(u)}{\pm 1-u} -\mu_0)
\cdot w \ge 0, \quad (x, y) \in \Omega^{\pm}.
$$
Now we choose two positive constants $\mu_1$ and $\mu_2$ as
$$
\mu_1= \frac{c+\sqrt{ c^2+ 8\mu_0}}{4},  \quad \mu_2=
\frac{-c+\sqrt{ c^2+ 8\mu_0}}{4}.
$$
Note  that
$$
\mu_1=\mu_2 +c/2, \quad  \mu_1^2 +\mu_2^2 =c^2/4 +\mu_0.
$$

For any rectangular domain $D_R:= \{ (x, y):  |x| \le R,\, 0<y<R\}$,
we consider the function
$$
B(x, y)= 4 e^{ -\mu_2R-cy/2} \cosh(\mu_1 y) \cosh(\mu_2 x), \quad
(x, y) \in D_R.
$$
Straight forward computations reveal that
$$
B_{xx}+B_{yy}+cB_{y} -\mu_0 B=0, \,\, \text{ in } \, D_R, \quad B
\ge 1 \text{ on } \, \partial D_R.
$$

Now for any $(x_0, y_0) \in \Omega^{\pm}$, let $R=R(x_0, y_0)$ be
the distance of $(x_0, y_0)$ to $\Omega^0$ and compare $w(x, y)$
with $B(x-x_0, y-y_0)$ in $D_R(x_0, y_0):=\{(x,y):  (x-x_0, y-y_0)
\in D_R\}$.   Then the maximum principle implies that
$$
w(x,y) \le B(x-x_0, y-y_0), \quad (x, y) \in D_R(x_0, y_0).
$$
In particular, we have $ w(x_0, y_0) \le B(0,0)= 4e^{-\mu_2 R}$. In
view of \eqref{slope}, \eqref{width} and the definition of $k_1,
k_2$, we know that, for  $R(x, y) \ge K$, there exists   some
constant $\mu_3 \in (0, 1)$ such that  $R(x, y) \ge \mu_3 d(x, y) $
when $|y|>Y_0$ and $R(x, y) \ge \mu_3 |x|$ when $ |y|\le Y_0$.

Hence we  derive
$$
|u^2-1|   \le C_0 e^{ -\nu d(x, y)} , \quad  |y|>Y_0; \quad
|u^2-1|\le C_0e^{-\nu |x| }, \quad |y|\le Y_0.
$$
for $\nu=\mu_2\mu_3$ and some constant $C_0>0$.  Note that $\nu<
\sqrt {\mu_0} \le \min\{ \sqrt {F''(1)}, \sqrt {F''(-1)}  \}$.

Then \eqref{exponential} follows from the  standard estimates for
elliptic equations.

\end{proof}

With the exponential decay of $u$, we can define
\begin{equation}
\rho(y)=\rho(y; u):=\int_{\Er} [\frac{1}{2}( |\nabla_x u|^2 -
u_y^2)+F(u) ]dx, \quad y \in \Er.
\end{equation}

The following Hamiltonian identity holds.

\begin{lemma}\label{hamiltonian}
For any $y_0, y \in \Er$, there holds the following Hamiltonian
identity
\begin{equation}\label{hamiltonian-wave}
\rho(y)-\rho(y_0)= c \int_{y_0}^{y} \int_{\Er} |u_y|^2 dx dy.
\end{equation}
\end{lemma}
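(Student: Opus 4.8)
The plan is to show that $\rho$ is continuously differentiable in $y$ with $\rho'(y)=c\int_{\Er}u_y^2\,dx$, after which the identity \eqref{hamiltonian-wave} follows by integrating from $y_0$ to $y$; all the differentiations and the integration by parts in $x$ will be justified by the exponential decay of Lemma \ref{exponential1}. As a preliminary step I would record that $\rho(y)$ is finite for every $y$ and bounded on compact $y$-intervals: since $n=2$ its integrand $\tfrac12(u_x^2-u_y^2)+F(u)$ is bounded by a fixed constant on the strip $\Omega^0_y$ (of length $<K$), while outside this strip Lemma \ref{exponential1}, together with the elementary bound $|F(u)|\le C(1-u^2)^2$ coming from $F(\pm1)=F'(\pm1)=0$, gives
\[
\Bigl|\tfrac12(u_x^2-u_y^2)+F(u)\Bigr|\le C e^{-2\nu d(x,y)}\quad(|y|>Y_0),\qquad \le C e^{-2\nu|x|}\quad(|y|\le Y_0),
\]
with $d(x,y)=\min\{|x-k_1(y)|,|x-k_2(y)|\}$ growing linearly in $|x|$; since $k_1,k_2$ are continuous and $Y_0$ is fixed, this bound is locally uniform in $y$, so $\rho$ is finite and continuous.

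Next I would differentiate under the integral sign, which is legitimate because the differentiated integrand
\[
u_xu_{xy}-u_yu_{yy}+F'(u)u_y
\]
is, by the $C^3(\Er^2)$ bound on $u$ and Lemma \ref{exponential1}, again dominated by $Ce^{-2\nu d(x,y)}$ (resp.\ $Ce^{-2\nu|x|}$) locally uniformly in $y$; this also yields continuity of $\rho'$. Integrating the first term by parts in $x$ and discarding the boundary contribution $[u_xu_y]_{x=-\infty}^{x=+\infty}$, which vanishes by the same exponential decay, I obtain
\[
\rho'(y)=-\int_{\Er}u_y\bigl(u_{xx}+u_{yy}-F'(u)\bigr)\,dx=c\int_{\Er}u_y^2\,dx,
\]
where the last equality is just \eqref{eq-wave} rewritten as $u_{xx}+u_{yy}-F'(u)=-cu_y$. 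Then the fundamental theorem of calculus gives $\rho(y)-\rho(y_0)=\int_{y_0}^y\rho'(t)\,dt=c\int_{y_0}^y\int_{\Er}u_y^2\,dx\,dy$, the right-hand side being finite by \eqref{u_y}.

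I expect the only real work to be the justification of differentiating and of integrating by parts inside the $x$-integral, that is, making the domination by $Ce^{-2\nu d(x,y)}$ uniform over compact intervals of $y$; this is exactly where the continuity of $k_1,k_2$, the fixity of $Y_0$, and the exponential estimates of Lemma \ref{exponential1} enter, and the remaining manipulations are routine.
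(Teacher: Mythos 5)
Your proof is correct and is exactly the intended argument: the paper states Lemma \ref{hamiltonian} without proof (it is the standard Hamiltonian-identity computation, cf.\ \cite{gui1}), namely differentiating $\rho$ under the integral sign, integrating $u_xu_{xy}$ by parts in $x$, and substituting $u_{xx}+u_{yy}-F'(u)=-cu_y$ from \eqref{eq-wave} to get $\rho'(y)=c\int_{\Er}u_y^2\,dx$. Your justification of the differentiation, the vanishing boundary terms, and the finiteness of $\rho$ via the exponential decay of Lemma \ref{exponential1} supplies precisely the details the paper leaves to the reader.
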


\vskip0.2in

\subsection{ Only Case (i) is valid}

\vskip0.2in

 Using  the exponential decay \eqref{exponential}, the
Hamiltonian identity \eqref{hamiltonian-wave} and Lemma
\ref{profile}, we can exclude the Cases (ii)-(iv) in subsection 2.1.

The next lemma further asserts that only the first case is possible.

\begin{lemma}\label{onecase}
Assume that  $u$  is solution to \eqref{eq-wave}, \eqref{monotone}
and \eqref{limit-tw}, and the graph of $y=\gamma(x)$ is the 0-level
set of $u$.  Then

\begin{equation}\label{behavior}
 \lim_{ x \to \infty} \gamma'(x) =\infty,  \,\, \lim_{x \to -\infty} \gamma'(x) = -\infty.
\end{equation}
\end{lemma}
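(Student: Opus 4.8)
The plan is to combine the Hamiltonian identity of Lemma~\ref{hamiltonian} with an energy computation along horizontal slices in order to rule out Cases (ii), (iii) and (iv). Since $c>0$ and $u_y>0$, identity \eqref{hamiltonian-wave} shows that $\rho(y)$ is strictly increasing in $y$ and that its total variation $\rho(+\infty)-\rho(-\infty)=c\int_{\Er^2}u_y^2\,dx\,dy$ is finite by \eqref{u_y}; hence the limits $\rho(\pm\infty)$ exist and satisfy $\rho(-\infty)<\rho(+\infty)$ and $\rho(-\infty)\le\rho(y)\le\rho(+\infty)$ for every $y$. The heart of the argument is to evaluate these two limits: I claim $\rho(\pm\infty)=N_\pm\,\be$, where $N_+$ (respectively $N_-$) is the number of sign changes of the slice $x\mapsto u(x,y)$ for $y$ near $+\infty$ (respectively $-\infty$) --- a quantity that is constant for $|y|>Y_0$, being the number of branches $k_i$ present at that height --- and $\be>0$ is the energy of the one-dimensional profile $g$.

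To count $N_\pm$, integrating $\gamma'$ fixes the shape of the level curve: in Case (i), $\gamma(x)\to+\infty$ as $x\to\pm\infty$ and $\gamma$ attains a finite minimum; in Case (iv), $\gamma(x)\to-\infty$ as $x\to\pm\infty$ and $\gamma$ attains a finite maximum; in Cases (ii) and (iii), $\gamma$ is eventually monotone near $\pm\infty$ and tends to $+\infty$ at one end and to $-\infty$ at the other. Together with $u_y>0$, \eqref{limit-tw}, the exponential decay of Lemma~\ref{exponential1}, and the description of $\gamma^0\cap\{|y|>Y_0\}$ through $k_1,k_2$, this gives: $N_+=2$, $N_-=0$ in Case (i); $N_+=N_-=1$ in Cases (ii) and (iii); and $N_+=0$, $N_-=2$ in Case (iv).

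For the claim $\rho(\pm\infty)=N_\pm\be$, I pick a sequence $y_m\to\pm\infty$ along which $\int_{\Er}u_y(x,y_m)^2\,dx\to0$ (possible since \eqref{u_y} makes this $y$-integrand lie in $L^1(\Er)$); by monotonicity $\rho(y_m)\to\rho(\pm\infty)$. The sign changes of $u(\cdot,y_m)$ occur at the points $k_i(y_m)$, and the growth of $\gamma$ forces $|k_i(y_m)|\to\infty$ and $|k_1(y_m)-k_2(y_m)|\to\infty$, so distinct layers become disjoint. Since $(k_i(y_m),y_m)$ lies on the $0$-level curve we have $\gamma(k_i(y_m))=y_m$, so Lemma~\ref{profile} applied with $s=k_i(y_m)$ gives that $u(k_i(y_m)+x,y_m)$ converges in $C^3_{loc}$ to $g(x)$ or $g(-x)$, each layer thus contributing energy $\be$; on $\{d(x,y_m)\ge M\}$ (or, when $N_\pm=0$, on the whole slice, which then lies far from $\Omega^0$) Lemma~\ref{exponential1} together with \eqref{width} bounds $\frac{1}{2}u_x^2+F(u)$ by $Ce^{-\nu d}$, contributing $O(e^{-\nu M})$ uniformly in $m$. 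Letting $m\to\infty$ and then $M\to\infty$ yields $\int_{\Er}(\frac{1}{2}u_x^2+F(u))(x,y_m)\,dx\to N_\pm\be$, and since $\int_{\Er}u_y(x,y_m)^2\,dx\to0$ we conclude $\rho(\pm\infty)=N_\pm\be$. Now: in Cases (ii) and (iii), $\rho(+\infty)=\rho(-\infty)=\be$, contradicting $\rho(-\infty)<\rho(+\infty)$; in Case (iv), $\rho(+\infty)=0<2\be=\rho(-\infty)$, contradicting $\rho(-\infty)\le\rho(+\infty)$. Hence only Case (i) survives, which is exactly \eqref{behavior}.

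The main obstacle is the quantization $\rho(\pm\infty)=N_\pm\be$: one must verify that the transition layers become asymptotically disjoint so Lemma~\ref{profile} applies to each independently, that the far-field contribution to $\rho$ genuinely vanishes (this is where the \emph{uniformity} of the exponential decay in Lemma~\ref{exponential1} and of the width bound \eqref{width} on $\meas(\Omega^0_y)$ are essential), and that the indefinite term $-\frac{1}{2}u_y^2$ in $\rho$ is negligible --- which is why the limit is taken along a sequence $y_m$ with $\int u_y(\cdot,y_m)^2\to0$ and $\rho(\pm\infty)$ is then recovered from monotonicity. The remaining ingredients (the case-by-case bookkeeping of the signs of $\gamma'$ and the fact that $g$ and $g(-x)$ both have energy $\be$) are routine.
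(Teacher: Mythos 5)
Your proposal is correct and follows essentially the same route as the paper: compute $\lim_{y\to\pm\infty}\rho(y)$ in each of Cases (ii)--(iv) as $N_\pm\be$ determined by the number of transition layers on far horizontal slices (using Lemma \ref{profile} and the exponential decay of Lemma \ref{exponential1}), and contradict the Hamiltonian identity \eqref{hamiltonian-wave} together with $u_y>0$. Your extra care with the indefinite $-\frac{1}{2}u_y^2$ term (passing to a sequence $y_m$ with $\int_\Er u_y^2(\cdot,y_m)\,dx\to 0$ and invoking monotonicity of $\rho$) is a legitimate way to justify a step the paper treats as a straightforward computation.
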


\begin{proof}

In Case (ii),  using the exponential decay \eqref{exponential} and
Lemma \ref{profile},  we can compute straight forwardly
$$
\lim_{y \to \infty} \rho(y)=\lim_{ s \to \infty} \rho(0; u_s)=
\int_{-\infty}^{\infty}  [\frac{1}{2}( |g'|^2(x) +F(g(x)) ]dx =\be
$$
and
$$
\lim_{y_0 \to -\infty} \rho(y_0)=\lim_{ s \to -\infty} \rho(0; u_s)=
\int_{-\infty}^{\infty}  [\frac{1}{2}( |g'|^2(x) +F(g(x)) ]dx= \be.
$$
Then the Hamiltonian identity \eqref{hamiltonian-wave} leads to
$$
 \int_{\Er} \int_{\Er} |u_y|^2 dx dy=0.
 $$
 This is a contradiction.   Case (iii) can be excluded similarly.

In Case (iv), we have

$$
\lim_{y \to \infty} \rho(y)=0
$$
and
\begin{equation*}
\begin{split}
\lim_{y_0 \to -\infty} \rho(y_0) &= \lim_{y_0 \to -\infty}
\int_{-\infty}^0 [\frac{1}{2}( |\nabla_x u|^2 - u_y^2)+F(u) ]dx \\
& + \lim_{y_0 \to -\infty} \int_0^{\infty} [\frac{1}{2}( |\nabla_x
u|^2 - u_y^2)+F(u) ]dx \\
&=2\int_{-\infty}^{\infty}  [\frac{1}{2}( |g'|^2(x) +F(g(x)) ]dx=
2\be.
 \end{split}
 \end{equation*}

 This leads to
$$
 \int_{\Er} \int_{\Er} |u_y|^2 dx dy =-\frac{2\be}{c}<0.
 $$
 This is a contradiction, and the lemma is proven.
 \end{proof}

\vskip0.2in
\subsection{The level set  curve  is asymptotically hyperbolic
cosine}

\vskip0.2in

In this subsection, we shall show that the 0-level set $y=\gamma(x)$
of $u$ is  asymptotically hyperbolic cosine. It is more convenient
to write the level set as the graph of functions $ x=k_1(y), \,
x=k_2(y)$ for $y >Y_0$ and show that they are logarithmic. In the
previous subsection, we have already derived properties  for
$y=\gamma(x)$ which can be rewritten for $x=k_{i}(y)$ as follows
\begin{equation}\label{prelim}
\left\{
 \begin{split}
 &k_1'(y)<0, \quad k_2'(y)>0,  \quad \text{ for } \, y  > y_0 \\
 &\lim_{y \to \infty} k_1(y)=-\infty,  \quad \lim_{y \to \infty}
 k_2(y)=\infty \\
 &\lim_{y \to \infty} k_1'(y)=\lim_{y \to \infty} k_2'(y)=0
 \end{split}
 \right.
 \end{equation}
We shall prove the following asymptotical formulas for $k_i(y), i=1,
2.$
\begin{lemma}\label{lemma2.7}
There holds
\begin{equation}\label{asymptotics}
\left\{
\begin{split}
&k_1(y)= -\frac{1}{2\mu} ln(y)+ C_1+o(1), \quad \text{ as }\,  y \to \infty \\
&k_2 (y)=\frac{1}{2\mu} ln(y)+C_2+o(1),\quad \text{ as } \, y \to
\infty
\end{split}
\right.
\end{equation}
for some constants $C_1, C_2$, where $\mu=\sqrt{F''(1)}>0$.
\end{lemma}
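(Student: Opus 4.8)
The plan is to combine the exponential decay of $u$ away from the level curve (Lemma \ref{exponential1}), the Hamiltonian identity (Lemma \ref{hamiltonian}), and a one-dimensional energy projection argument to pin down the growth rate of $k_1, k_2$. Since only Case (i) survives (Lemma \ref{onecase}), for $y > Y_0$ the level set $\gamma^0$ consists of the two branches $x = k_1(y) < 0 < k_2(y) = x$, each asymptotically vertical. The key geometric fact from \eqref{width} is that the transition region $\Omega_y^0$ has width at most $K$, and by Lemma \ref{profile} the profile of $u$ across each branch converges to $g(\pm x)$ after the appropriate reflection. Outside this region $u$ is exponentially close to $\pm 1$ by Lemma \ref{exponential1}.

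First I would analyze $\rho(y)$ for large $y$. On the slab at height $y > Y_0$, the section $\{x : (x,y) \in \Er^2\}$ splits into three pieces: $x < k_1(y)$ where $u \approx -1$, the bounded bridge near $k_1$, the plateau $k_1(y) < x < k_2(y)$ where $u \approx +1$ once $y$ is large (so the integrand is $O(e^{-\nu d})$ there too), the bounded bridge near $k_2$, and $x > k_2(y)$ where $u \approx +1$. The $u_y^2$ contribution to $\rho$ is integrable over $\Er^2$ by Lemma \ref{u_y}, hence $\int_{\Er} u_y^2\,dx \to 0$ along a sequence and, using the Hamiltonian identity \eqref{hamiltonian-wave} together with monotonicity of $c\int_{y_0}^{y}\int u_y^2$, one gets that $\rho(y)$ has a limit as $y \to \infty$; the limit is $2\be$ (two copies of the one-dimensional energy, one per bridge — note in Case (i) both bridges are present for all large $y$). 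So $\rho(y) \to 2\be$. The next step is to extract the \emph{rate}: I would estimate $\rho(y) - 2\be$ from below by the interaction energy of the two transition layers sitting at horizontal distance $k_2(y) - k_1(y)$ apart across the $+1$ plateau. Since each layer decays like $e^{-\mu |x - k_i|}$ with $\mu = \sqrt{F''(1)}$, the leading interaction term is of order $e^{-\mu (k_2(y) - k_1(y))}$, and more precisely $\rho(y) - 2\be \sim -c_* e^{-\mu(k_2(y)-k_1(y))}$ for an explicit $c_* > 0$ (this is the standard Kawohl/Sternberg-type layer-interaction computation; the sign is negative because the two kinks pull toward each other). Comparing with the Hamiltonian identity, $\rho(y) - 2\be = -c \int_y^{\infty}\int_{\Er} u_y^2\,dx\,dy$, so
\begin{equation*}
c \int_y^{\infty}\int_{\Er} u_y^2\,dx\,dy \;\sim\; c_*\, e^{-\mu(k_2(y)-k_1(y))}.
\end{equation*}

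The final step converts this into the logarithmic asymptotics. I would derive an ODE (or differential inequality) for $k_2(y) - k_1(y)$ by differentiating, or more robustly by relating $\int_{\Er} u_y^2\,dx$ at height $y$ to the speed at which the layers separate. Near each branch, $u(x,y) \approx g\!\left(\pm(x - k_i(y))\right)$, hence $u_y \approx \mp g'(\cdot) k_i'(y)$ and $\int_{\Er} u_y^2\,dx \approx (k_1'(y)^2 + k_2'(y)^2)\,\be$ (using $\int |g'|^2 = \be$ in the balanced case). Feeding this into the relation above and using $k_1' < 0 < k_2'$ with both tending to $0$, one obtains, after setting $\psi(y) := k_2(y) - k_1(y)$, a relation of the form $\psi'(y) \sim \text{const}\cdot e^{-\mu \psi(y)}/(\text{something involving } \psi')$ — cleaner is to differentiate $c\int_y^\infty \int u_y^2 = c_* e^{-\mu\psi(y)}$ directly, giving $c \int_{\Er} u_y^2(x,y)\,dx = c_*\mu \psi'(y) e^{-\mu\psi(y)}$, then substitute the layer approximation to get $\psi'(y)^2 \cdot (\text{const}) = \psi'(y) e^{-\mu \psi(y)} \cdot (\text{const})$, i.e. $\psi'(y) \sim c_1 e^{-\mu\psi(y)}$. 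Integrating $e^{\mu\psi}\psi' \sim c_1$ yields $e^{\mu \psi(y)} \sim c_1 \mu\, y$, hence $\psi(y) = \frac{1}{\mu}\ln y + O(1)$. Finally, the even-ness of the leading behavior — that $k_1(y) = -\frac{1}{2\mu}\ln y + C_1 + o(1)$ and $k_2(y) = \frac{1}{2\mu}\ln y + C_2 + o(1)$ separately, not just their difference — follows because by symmetry of the interaction each layer feels the same force, so $|k_1'(y)| \sim k_2'(y) \sim \frac{1}{2}\psi'(y)$; integrating each gives the stated $\frac{1}{2\mu}\ln y$ with an error that converges (the error is $o(1)$ because $\psi'$ is integrable against the discrepancy). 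The constants $C_1, C_2$ are then defined as the limits of $k_i(y) \mp \frac{1}{2\mu}\ln y$.

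The main obstacle is making the layer-interaction asymptotics rigorous and with the correct sharpness: one must control $u$ on the $+1$ plateau between the two branches well enough to see that the deficit $\rho(y) - 2\be$ is genuinely comparable to $e^{-\mu\psi(y)}$ (both upper and lower bounds), not merely $o(1)$, and that no other term (e.g. curvature of the branches, or the $c u_y$ drift term in \eqref{eq-wave}) contaminates the leading order. This requires a refined version of the convergence in Lemma \ref{profile} — quantitative, with exponential error controlled in terms of $d(x,y)$ — and careful bookkeeping of which height-$y$ cross-sections contribute. Once that comparison is in hand, the reduction to the scalar ODE for $\psi$ and the integration are routine; the $o(1)$ (rather than $O(y^{-\delta})$) in the statement is a signal that the author is content with the crude version, which keeps this last part short.
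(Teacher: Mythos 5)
Your strategy (Hamiltonian identity plus layer--interaction energetics) is genuinely different from the paper's, which never computes $\rho(y)-2\be$ asymptotically; instead it decomposes $u(\cdot,y)=\phi(l_1(y),l_2(y),\cdot)+v$ against the two--layer profile, projects the PDE onto $\phi_{l_1},\phi_{l_2}$ to get a coupled ODE system for $l_1,l_2$ with forcing $\pm E_l/\be$, controls $\|v\|$ by a coercivity estimate \eqref{nondege} and a maximum principle for an ODE in $y$, and only then integrates. Your route could plausibly recover the leading growth of $\psi=k_2-k_1$, but note that even there it is not really cheaper: to know $\rho(y)-2\be\sim -c_*e^{-\mu\psi(y)}$ with the sharp constant you must show the cross--section's excess energy over the constrained two--layer minimizer is $o(e^{-\mu\psi})=o(1/y)$, i.e.\ you need $\|v\|^2=o(1/y)$, which is exactly the estimate the paper extracts from the ODE system. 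So the "main obstacle" you flag is not a technicality bolted onto an otherwise complete argument; it is the same analysis the paper performs.

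The genuine gap is the last step, where you split $\psi$ into $k_1$ and $k_2$ separately. The Hamiltonian identity and the interaction energy are symmetric functionals of the configuration: they see only $k_2-k_1$ and $\int u_y^2\,dx\approx\be\bigl((k_1')^2+(k_2')^2\bigr)$, so they can never by themselves locate the midpoint $(k_1+k_2)/2$. Your argument for this step --- ``each layer feels the same force, so $|k_1'|\sim k_2'\sim\frac12\psi'$, and integrating gives an $o(1)$ error because $\psi'$ is integrable against the discrepancy'' --- does not work: $k_2'=\frac12\psi'(1+\epsilon(y))$ with $\epsilon\to 0$ gives $k_2(y)-\frac12\psi(y)=\int^y\frac12\psi'\epsilon$, and since $\psi'\sim\frac{1}{\mu y}$ is \emph{not} integrable, $\psi'\epsilon$ with merely $\epsilon=o(1)$ can integrate to something unbounded (of size $o(\ln y)$). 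To conclude that $k_1+k_2$ converges you need the asymmetric part of the forcing on the two layers to be integrable in $y$, i.e.\ an error of order $O(y^{-2})$ rather than $o(y^{-1})$ in each layer's equation of motion. The paper obtains precisely this: after upgrading $\|v\|=O(y^{-1})$ to a pointwise bound $\|v(\cdot,y)\|_{L^\infty}+\|\nabla v(\cdot,y)\|_{L^\infty}=O(y^{-1})$ by elliptic $L^p$ estimates, it derives \eqref{ode1-final}--\eqref{ode2-final}, $cl_i'+l_i''=\mp E_l/\be+O(y^{-2})$, whence $c(l_1+l_2)'+(l_1+l_2)''=O(y^{-2})$ and $l_1+l_2$ converges. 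Without some substitute for this quantitative, per--layer cancellation your proof establishes only $k_2-k_1=\frac{1}{\mu}\ln y+O(1)$ (or at best $+\,$const$\,+o(1)$), not the individual formulas in \eqref{asymptotics}.
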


\subsubsection{ A standard profile with  two transition layers}

\vskip0.2in

 The proof of  Lemma \ref{lemma2.7}  follows the main ideas of
\cite{chen} in the derivation of similar formula for axially
symmetric traveling wave solutions. Instead of only dealing with one
unknown function in \cite{chen}, here we need to consider the
coupled functions $k_i(y),\, i=1, 2$.  We shall approximate $u(x,
y)$ as functions of $x$ by a family of standard profiles  of two
transition layers for $y$ sufficiently large. Namely, for $l_1<l_2$
and $2l=l_2-l_1$ sufficiently large, we define a continuous and
piecewise smooth function $\phi=\phi(l_1, l_2, x)$ so that it is the
solution of one dimensional Allen-Cahn equation in three segments of
$\Er$:
\begin{equation}
\left \{
\begin{split}
&\phi'' -F'(\phi)=0, \quad  x \in (-\infty, l_1)\cup (l_1, l_2) \cup (l_2, \infty)\\
&\phi(x)>0, \quad  x \in (l_1, l_2); \quad \phi(x)<0, \quad  x \in
(-\infty, l_1)\cup (l_2, \infty)\\
&\phi(l_1)=\phi(l_2)=0,  \quad \lim_{x \to  \pm \infty} \phi(x) =-1
\end{split}
\right.
\end{equation}

Below we collect some basic facts about $\phi=\phi(l_1, l_2, x)$ and
related functions. Indeed, $\phi(l_1, l_2, x)=g(l_2-x)$ for $x>l_2$
and $\phi(l_1, l_2, x)=g(x-l_1)$ for $x < l_1$.  For $ x \in (l_1,
l_2)$, $\phi(l_1, l_2, x)= g(l, x-(l_1+l_2)/2)$  where  $ g(l,
x)=g(l, -x) $  can  be solved explicitly by
\begin{equation}\nonumber
\begin{split}
& g_x^2(l, x)=  2 F(g(l,x))- 2F(g(l,0)), \quad x \in (-l, l)\\
& \int_{g(l,x)}^{g(l, 0)} \frac{ds}{\sqrt{
2(\bigl(F(s)-F(g(l,0))\bigr)}} =x, \quad x \in (0, l)
\end{split}
\end{equation}
where $0<g(l, 0)<1$.

 Note that  elementary  computations can lead to $  \lim_{l \to
\infty} g(l,0) =1$ and
\begin{equation*}
l=\int_{0}^{g(l, 0)} \frac{ds}{\sqrt{ 2(F(s)-F(g(l,0))}}=
-\frac{\ln( 1-g(l,0))}{\mu} +A_1 +o(1)
\end{equation*}
as $l \to \infty$, where $A_1$ is a constant depending only on $F$.
It is also easy to see that $g(l, x)$ is the minimizer of
$$
\bE_l(v):=\int_{-l}^{l} [\frac{1}{2} |v'|^2 + F(v)]dx
$$
in $\calh_l:=\{ v \in H^1_0([-l, l]): 0\le v \le 1,
\,\,v(-l)=v(l)=0\}$ when $l$ is sufficiently large.

If we denote
\begin{equation*}
E(l):= \bE_l(g(l, \cdot))=\bE(\phi(-l, l, \cdot))-\be,
\end{equation*}
then $ E(l)=\be+o(1)$ and
\begin{equation}\label{energy-l}
 \quad E_l:=\frac{\partial E(l)}{\partial l}= |g'(0)|^2-g_x^2(l, l)=2F(g(l,0))= 2\be A e^{
 -2 \mu l+o(1)}
\end{equation}
where $A$ is a positive constant depending only on $F$ (see
\cite{chen}).

 For a  piecewise continuous  function $\psi(x)$  with possible
 jump discontinuities  at $ x=l_1, l_2$, we define
\begin{equation*}
\begin{split}
&\hat{ \psi}= \psi(l_1+)-\phi(l_1-), \quad
\check{\psi}=\psi(l_2+)-\phi(l_2-),\\
& \tilde \psi=\frac{1}{2} \bigl(  \psi(l_1-)+\phi(l_1+)\bigr) ,
\quad \bar{\psi}=\frac{1}{2}\bigl( \psi(l_2+)+\phi(l_2-)\bigr).
\end{split}
\end{equation*}
Note that $E_l= -\hat{\pu^2}=-\hat{\phi_{x}^2}=\check{
\phi_{l_2}^2}=\check{\phi_{x}^2}$.

 We also use the  norm and inner product of
$L^2(\Er)$, i.e.,
$$
\langle\psi_1, \psi_2\rangle  :=\int_{\Er} \psi_1 \psi_2 dx, \quad
\|\psi\|^2:=\langle\psi, \psi\rangle .
$$

Now we state  the following lemma.
\begin{lemma}
For $l=(l_2-l_1)/2>0$,  $\p$ is smooth  except at $ x=l_1, l_2$ and
$$
\phi_{l_1}\le 0, \quad \phi_{l_2} \ge 0, \quad
\|\pr\|^2=E(l)+o(1)=\be+o(1), \quad i=1, 2.
$$
Furthermore, there exists a constant $C>0$ such that   $ \forall
l>1$
\begin{equation*}
\begin{split}
& \sum_{i=1, 2} \|\pr\|_{L^1(\Er)} + \sum_{i, j=1,
2}\|\ps\|_{L^1(\Er)}+\sum_{i, j, k=1, 2} \|\pt\|_{L^1(\Er)} \le C;\\
& \sum_{i=1, 2} \|\pr\| + \sum_{i, j=1, 2}\|\ps\|+\sum_{i, j,
k=1, 2} \|\pt\| \le C\\
& \sum_{i=1, 2} (|\hat{ \pr}|+|\check{ \pr} |)+\sum_{i, j, k=1, 2}
(|\hat{ \ps}|+|\check{ \ps} |)+ \sum_{i=1, 2}  (|\hat{
\phi_{xl_i}}|+|\check{ \phi_{xl_i} }|) \le C \cdot E_{l}\\
& |<\phi_{l_1}, \phi_{l_2}>| + |E_{ll}|+|E_{lll}||  \le C \cdot
E_{l}
\end{split}
\end{equation*}
\end{lemma}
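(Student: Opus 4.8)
The plan is to reduce every assertion to an explicit computation on the one--dimensional profile $g$ and the compressed profile $g(l,\cdot)$, using only the exponential decay of $g,g',g''$ (non--degeneracy of $g$, a consequence of $F''(\pm1)>0$) together with the facts already recorded: $\|g'\|^2=\bE(g)=\be$ (from $g'^2=2F(g)$), $E(l)=\be+o(1)$, and $E_l=2F(g(l,0))$ with $1-g(l,0)\sim e^{-\mu l}$, hence $E_l\sim e^{-2\mu l}$. This is the strategy of \cite{chen}, adapted to two coupled matching points. On each of $(-\infty,l_1)$, $(l_1,l_2)$, $(l_2,\infty)$ the function $\p$ is an ODE solution depending smoothly on $(l_1,l_2)$, so it is $C^\infty$ there; the $l_i$--derivatives can jump only at $x=l_1,l_2$, from the mismatch of $\phi_x$.

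\textbf{Signs of $\pu$ and $\phi_{l_2}$.} Differentiating the explicit representations, $\pu=-g'(x-l_1)<0$ for $x<l_1$ and $\pu\equiv0$ for $x>l_2$. On $(l_1,l_2)$, $\pu$ solves the linearized equation $\pu''-F''(\phi)\pu=0$, with $\pu(l_1+)=-\phi_x(l_1+)<0$ (differentiate the identity $\p\big|_{x=l_1}=0$) and $\pu(l_2-)=0$ (differentiate $\p\big|_{x=l_2}=0$). Since $\phi_x$ is a second solution of the same linear equation, positive on $(l_1,m)$, vanishing at the midpoint $m=(l_1+l_2)/2$ with $\phi_x'(m)=F'(g(l,0))<0$, and negative on $(m,l_2)$, the ratio $\pu/\phi_x$ is monotone on each of $(l_1,m)$ and $(m,l_2)$ with monotonicity fixed by the sign of the (constant) Wronskian of $\pu$ and $\phi_x$, which is evaluated at $m$ using $\pu(m)=-\tfrac12\,\partial_l g(l,0)\cdot(\,\cdots)$, more precisely $\pu(m)=-\tfrac12 g_l(l,0)<0$. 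Tracking the boundary values forces $\pu<0$ throughout $(l_1,l_2)$; hence $\pu\le 0$, and $\phi_{l_2}\ge 0$ follows by the reflection $x\mapsto l_1+l_2-x$.

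\textbf{The $L^2$--mass and the uniform bounds.} For $l$ large the middle bump $g(l,\cdot)$ is two well--separated copies of $g$, so $\pr$ agrees in $L^2$, up to $o(1)$, with a translate of $-g'$; with $\|g'\|^2=\be$ and $E(l)=\be+o(1)$ this gives $\|\pr\|^2=E(l)+o(1)=\be+o(1)$. For the remaining $L^1$ and $L^2$ bounds one observes that each of $\pr,\ps,\pt$ is a fixed linear combination of $g,g(l,\cdot)$ and their $(l,x)$--derivatives, and that on $(l_1,l_2)$ it solves a linear equation whose coefficient $F''(\phi)$ tends to $\mu^2>0$ in the bulk; the explicitly computed $O(1)$ boundary data at $l_1,l_2$ then force exponential decay toward the center at a rate bounded below uniformly in $l$. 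Thus each of these functions is exponentially localized near $\{l_1,l_2\}$ with $l$--independent mass, and the $\le C$ bounds follow.

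\textbf{Jumps, the cross term, and $E_{ll},E_{lll}$.} The jump quantities $\hat{\pr},\check{\pr},\hat{\ps},\dots,\hat{\phi_{xl_i}},\check{\phi_{xl_i}}$ measure, at $x=l_1,l_2$, the discrepancy between the compressed profile $g(l,\cdot)$ and the exact profile $g$; since $g(l,\pm l)=0=g(0)$ while $g_x^2(l,l)=g'(0)^2-E_l$ and, more generally, $g(l,\cdot)$ and its $(l,x)$--derivatives differ from those of $g$ at the matching points by quantities controlled by $F(g(l,0))=\tfrac12E_l$, all these jumps are $O(E_l)$ — in particular this recovers $E_l=-\hat{\phi_x^2}=\check{\phi_x^2}$. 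The cross term $\langle\pu,\phi_{l_2}\rangle$ is $O(E_l)$ because $\pu$ and $\phi_{l_2}$ decay exponentially away from $l_1$ and $l_2$, which are at distance $2l$. Finally, differentiating $E_l=2F(g(l,0))$ gives $E_{ll}=2F'(g(l,0))\,\partial_l g(l,0)$ and $E_{lll}=2F''(g(l,0))(\partial_l g(l,0))^2+2F'(g(l,0))\,\partial_l^2 g(l,0)$, and since $\partial_l g(l,0)\sim e^{-\mu l}$ and $F'(g(l,0))\sim e^{-\mu l}$ both are $O(e^{-2\mu l})=O(E_l)$.

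\textbf{Main obstacle.} The genuinely technical step is the uniform--in--$l$ control of the $l$--derivatives of $g(l,x)$: near the turning point $x=0$, where $g_x=0$ and the quadrature $\int ds/\sqrt{2(F(s)-F(g(l,0)))}$ is singular in the endpoint value, and near the matching points $x=\pm l$. This is handled by differentiating the ODE $g_{xx}=F'(g)$ together with the normalization $g_x^2(l,0)=0$ directly, rather than the quadrature, while carefully tracking the exponentially small error $E_l$; once this is done the remaining estimates are bookkeeping with the exponential decay of $g$.
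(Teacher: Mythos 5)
The first thing to say is that the paper offers no proof of this lemma at all: it is stated as a collection of elementary facts about the explicit profile $\phi(l_1,l_2,\cdot)$, with the reader implicitly referred to the analogous computations in \cite{chen}. So there is no argument in the paper to compare yours against, and your proposal has to stand on its own. Most of it does: the smoothness and sign claims, the identity $\|\phi_{l_i}\|^2=E(l)+o(1)=\be+o(1)$, the $O(E_l)$ bounds on the jumps (which reduce, as you say, to $g_x^2(l,\pm l)=g'(0)^2-E_l$ and to differentiating the constraints $g(l,\pm l)=0$ and the first integral $g_x^2=2F(g)-2F(g(l,0))$ in $l$), and the computation of $E_{ll},E_{lll}$ from $E_l=2F(g(l,0))$ all check out. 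You are also right to isolate the uniform-in-$l$ control of $\partial_l^k g(l,\cdot)$ as the real technical content; that step is only announced, not executed, in your write-up, and it is precisely what the paper suppresses as well. For the sign of $\phi_{l_1}$ on $(l_1,l_2)$, a cleaner route than the Wronskian bookkeeping is to use that $\phi_{l_1}$ solves $\psi''=F''(\phi)\psi$ with $\psi(l_1+)<0$, $\psi(l_2-)=0$, together with the positivity of the first Dirichlet eigenvalue of $-\partial_x^2+F''(\phi)$ on $(l_1,l_2)$ for $l$ large (this is exactly the non-degeneracy input \eqref{nondege}, i.e.\ Lemma 6.3 of \cite{chen}), which gives the maximum principle needed to propagate the sign.

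The one step in your argument that, as written, does not deliver the stated estimate is the cross term. You argue that $\langle\phi_{l_1},\phi_{l_2}\rangle=O(E_l)$ ``because the two functions decay exponentially away from $l_1$ and $l_2$, which are at distance $2l$.'' But on the whole interval $(l_1,l_2)$ one has $|x-l_1|+|l_2-x|=2l$ exactly, so the product of the two exponential envelopes is \emph{constant}, of size $e^{-2\mu l}\sim E_l$, over an interval of length $2l$; integrating gives $O(l\,E_l)$, not $O(E_l)$. Indeed, using the bulk linearization $1-g(l,s)\approx(1-g(l,0))\cosh(\mu s)$ one finds $\phi_{l_1}\approx-\tfrac{\mu}{2}(1-g(l,0))e^{-\mu s}$ and $\phi_{l_2}\approx\tfrac{\mu}{2}(1-g(l,0))e^{\mu s}$ on $(l_1,l_2)$, so the integrand is $\approx-\tfrac{\mu^2}{4}(1-g(l,0))^2$ with no sign change and no cancellation, and $\langle\phi_{l_1},\phi_{l_2}\rangle\approx-\tfrac{l}{2}E_l$. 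So either the bound in the lemma should read $C\,l\,E_l$ (which is all that is needed later, since only $\langle\phi_{l_1},\phi_{l_2}\rangle=o(1)$ is ever used), or a genuinely finer argument is required; your heuristic, taken at face value, proves the weaker bound. You should either sharpen this step or note explicitly that the factor of $l$ is harmless for the subsequent ODE analysis.
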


\vskip0.2in

\subsubsection{  Derivation of ordinary differential equations  for
$l_i, i=1, 2$}

\vskip0.2in

Now, for $y>Y_1$ sufficiently large,   we can  choose a unique pair
 $l_1(y) <l_2(y)$ so that
\begin{equation}\label{min-l_i}
||u(\cdot,y)- \phi(l_1(y), l_2(y), \cdot)||_{L^2(\Er)}=
\inf_{l_1<l_2} \{||u(\cdot, y)-\phi(l_1, l_2, \cdot)|| \}.
\end{equation}

As we shall show,  the asymptotical behavior of $u(x,y)$ near
$y=\infty$ can be accurately described by the dynamics of $l_i(y),
i=1, 2$. (See, e.g., \cite{chen} Section 6.1 for an intuitive
explanation for the case $l_1=-l_2$ by using invariant manifold and
center manifold terminology.)

 Let
$$
v(x, y)= u(x, y)-\phi(l_1(y), l_2(y), x), \quad x \in \Er, \quad y
\ge Y_1.
$$
In view of Lemma \ref{profile}, Lemma \ref{exponential1} and Lemma
\ref{onecase}, we see that
\begin{equation}\label{difference}
k_1(y)-l_1(y) \to 0, \quad k_2(y)-l_2(y) \to 0, \quad  \|v(\cdot,
y)\| \to 0, \quad \text{ as } y \to \infty.
\end{equation}

Moreover, using the implicit function theorem, one can see that for
$y>Y_1$ sufficiently large, the functions $l_{i} (y), i=1, 2$ are
smooth and satisfies
\begin{equation}\label{limitofl}
l_1'(y) < 0, \,\,\,  l_2'(y)>0, \,\, \quad  \lim_{y \to \infty}
l_i'(y)=0, \, \, i=1, 2.
\end{equation}
(See, e.g.,  \cite{chen} Lemma 6.2 for a similar statement for the
case  $l_1=-l_2$.)

It is also obvious that
\begin{equation}\label{limitofv}
\lim_{ y \to \infty} \|(|v|+|\nabla v|)\|_{L^{\infty}(\Er)}=0.
\end{equation}

From \eqref{min-l_i}  it is easy to see that
\begin{equation}\label{relation1}
\langle v(\cdot, y), \phi_{l_i} (l_1(y), l_2(y), \cdot ) \rangle=0,
\,\, i=1, 2, \quad y \ge Y_1
\end{equation}

Differentiating the above identities  with respect to $y$ and
dropping the variables  of functions for  the simplicity of
notation, we obtain
\begin{equation}\label{relation2}
\langle v_y, \pr\rangle +  \sum_{j=1, 2} \langle v, \ps\rangle l_j'
-\hat{ v \pr} l_1'+ \check{v \pr}l_2'=0, \quad i =1, 2.
\end{equation}

Differentiating \eqref{relation2} for $i=1$   with respect to $y$,
we have
\begin{equation*}\label{secondderivative}
\begin{split}
& \langle v_{yy}, \pu\rangle  +\langle v_y, \pv\rangle  l_1'+\langle
v_y, \pw\rangle  l_2'-\hat{ v_y \pu}
l_1' +\check{ v_y \pu }l_2'\\
&+\langle v_y, \pv\rangle  l_1'+ [ \langle  v, \pvv\rangle  l_1'+
\langle v, \pvw\rangle  l_2'-\hat{v \pv}
l_1'+ \check{v \pv} l_2'] l_1' + \langle  v, \pv\rangle  l_1''\\
&+ \langle  v_y, \pw\rangle  l_2'+ [ \langle  v, \pvw\rangle  l_1'+
\langle v, \pww\rangle  l_2'- \hat{v \pw}
l_1'+ \check{ v \pw} l_2' ]l_2'+ \langle v, \pw\rangle  l_2''\\
&-[ \hat{ v_y \pu} + \hat{v \pv} \cdot l_1'+ \hat{v \pw} \cdot l_2']
\cdot l_1' - \hat{ v \pu} l_1''\\
&+[ \check{ v_y \pu} + \check{ v \pv} \cdot l_1'+\check{ v \pw}
\cdot l_2'] l_2'+  \check{ v \phi_{l_2l_2}}  l_2''=0.
\end{split}
\end{equation*}
  This leads to
\begin{equation}\label{projection}
 |\langle v_{yy}, \pu \rangle | +| \langle v_y, \pu \rangle | = o(1)
 ( |l_1'|+|l_2'|+|l_1''|+|l_2''|), \quad \text{as} \,\, y \to \infty.
 \end{equation}
 Similar computations can also be done for $i=2$.

Now,  using  equation \eqref{eq-wave} we derive
\begin{equation}\label{eq-v}
v_{xx}+ v_{yy}+c v_{y} -\bigl((F'(v+\phi)-F'(\phi)\bigr)+
\phi_{yy}+c\phi_{y}=0, \quad (x, y)  \in  \Er^2 \setminus \Gamma,
y>Y_1.
\end{equation}
where
\begin{equation*}
\begin{split}
&\phi{_y}=\pu \cdot l_1'+\phi_{l_2} \cdot l_2'\\
&\phi_{yy}=\pv (l_1')^2+ 2\pw (l_1' l_2')+ \phi_{l_2l_2}
(l_2')^2+\pu \cdot l_1''+\phi_{l_2} \cdot l_2''.
\end{split}
\end{equation*}

Multiplying \eqref{eq-v} by $\phi_{l_1}$ and integrating over $\Er$,
we obtain
\begin{equation*}
\begin{split}
&(cl_1'+l_1'')  \|\pu\|^2+  \langle \pu, \phi_{l_2} \rangle
(cl_2'+l_2'')
+ \langle \pv, \pu \rangle (l_1')^2 \\
&+ \langle \pw, \pu \rangle (l_1' l_2')+ \langle \phi_{l_2l_2} , \pu
\rangle (l_2')^2 = J_{1, 1} + J_{1, 2}-J_{1, 3}
\end{split}
\end{equation*}
where
\begin{equation*}
\begin{split}
&J_{1,1}= \langle F''(\phi) v-v_{xx}, \pu \rangle; \\
&J_{1,2}=\langle F'(v+\phi)-F'(\phi)-F''(\phi) v, \pu \rangle;\\
&J_{1, 3} = \langle v_{yy}+ cv_{y},  \pu \rangle.
\end{split}
\end{equation*}

Using $E_l= -\hat{\phi_{l_1}^2}=\check{\phi_{l_1}^2}$, it  can be
computed that
\begin{equation}\label{J1}
J_{1,1}= \hat{ v_x \pu} -\hat{ v \phi_{l_1x} } +\check{ v_x \pu}
-\check{ v \phi_{l_1 x} }= -E_l \bigl(1+ O( |v_x|+|v|)
\bigr)=E_l(1+o(1)).
\end{equation}
Here we have used  \eqref{limitofv}, the fact
$$
\hat{v_x}=-\hat{\phi_{x}}= \hat{\pu}, \quad
\check{v_x}=-\check{\phi_{x}} =-\check{\pu}
$$ and
\begin{equation*}
\begin{split}
& \hat{v_x \phi_{l_1}} = \hat{ v_x} \tilde{\pu} + \tilde{v_x}
\hat{\pu} = \frac{1}{2} \hat{\pu^2}+\tilde{v_x }\hat{\pu}\\
&\check{v_x \phi_{l_1}} = \check{ v_x} \bar{\pu} + \bar{v_x}
\check{\pu} = -\frac{1}{2} \check{\pu^2}+\bar{v_x }\check{\pu}
\end{split}
\end{equation*}

On the other hand,   we have
\begin{equation}\label{J2}
J_{1, 2}= O(1) \langle v^2, \pu \rangle.
\end{equation}
In view of \eqref{projection}, \eqref{J1} and \eqref{J2},  we obtain
\begin{equation}\label{ode1}
cl_1'+l_1''+o(1) (cl_2'+l_2'')= -\frac{E_l}{\be} \bigl(1+o(1)\bigr)
+ o(1) \bigl( l_2'-l_1' \bigr)+ O(1) \|v\| (|l_1''|+|l_2''|) + O(1)
\langle v^2, \pu \rangle.
\end{equation}

Similarly we can obtain
\begin{equation}\label{ode2}
cl_2'+l_2'' +o(1) (cl_1'+l_1'')= \frac{E_l}{\be} \bigl(1+o(1)\bigr)
+ o(1) ( l_2'-l_1')+ O(1) \|v\| (|l_1''|+|l_2''|)+ O(1) \langle v^2,
\phi_{l_2} \rangle.
\end{equation}

Next we shall estimate $ \|v\|$.

\vskip0.2in
\subsubsection{ Estimate of $ \|v\|$ }

\vskip0.2in

We compute
\begin{equation*}
\begin{split}
&\frac{1}{2} \bigl(c\frac{d}{dy}+ \frac{d^2}{dy^2} \bigr) \|v^2\|-
 \langle cv_y+v_{yy}, v \rangle  - \|v_y\|^2\\
 &=-\hat{ v v_y} l_1'+
\check{vv_y} l_2' -\tilde{v} [ \hat{\pu} (l_1')^2+ \hat{\phi_{l_2}}
l_1'l_2'] - \bar{v}[ \check{\phi_{l_2}} l_1' l_2'+
\check{\phi_{l_2}}
(l_2')^2]\\
&= o(1) E_l [(l_1')^2 +(l_2')^2].
\end{split}
\end{equation*}
Here we have used
$$
\hat {v_y} =-\hat{\phi_x} l_1', \quad \check{v_y}= \check{\phi_x}
l_2'.
$$

Due to the non-degeneracy  and stability  property of $g$ in $\Er$,
there holds
\begin{equation}\label{nondege}
\|\psi_x\|^2 +\langle  F''(\phi) \psi, \psi \rangle \ge 2\nu
\|\psi\|^2+ | \tilde{\psi}|^2+ |\bar{\psi}|^2, \quad \forall  \psi
\perp \pr, \, \, i =1, 2
\end{equation}
for some constant $\nu>0$ when $2l=l_2-l_1$ is sufficiently large.
(See also \cite{chen}  Lemma 6.3.)

Multiplying  \eqref{eq-v} by $v$ and integrating on $\Er$, we can
obtain
\begin{equation}\nonumber
\begin{split}
 \langle cv_y+ v_{yy},v \rangle &= \langle F'(v+\phi)-F'(\phi)
-v_{xx} , v \rangle - \langle c \phi_y + \phi_{yy},  v \rangle \\
& \ge 2\nu ( \|v\|^2+ |\tilde{v}|^2 + |\bar{v}|^2 ) - \hat{ v
\phi_x} -
\check{ v \phi_x}\\
&- \langle \pv, v \rangle (l_1')^2 -2 \langle \pw, v \rangle
l_1'l_2' -\langle \phi_{l_2l_2}, v\rangle (l_2')^2\\
&\ge \nu  \|v\|^2 +O(1) E_l^2 +  O(1) [(l_1')^2 +(l_2')^2]^2
\end{split}
\end{equation}
for $y>Y_1$  sufficiently large.

Hence, we derive
\begin{equation}\label{ode3}
\frac{1}{2} \bigl(c\frac{d}{dy}+ \frac{d^2}{dy^2} \bigr) \|v\|^2
-\nu \|v\|^2 \ge  - M_3 \bigl(E_l^2+  (l')^4 \bigr), \quad y > Y_1
\end{equation}
for  some positive constant $M_3$ sufficiently large.

Let $\kappa_1<0< \kappa_2$ be the two roots of the characteristic
equation  $\kappa^2 +c \kappa -2\nu=0$ associated with the operator
on the left hand side of \eqref{ode3}.  Hence, by the maximum
principle for  second order ordinary differential equations, we have
\begin{equation*}
\|v\|^2 \le M_1 e^{\kappa_1 (y-Y_1)} + M_2\ int_{y}^{\infty}
\bigl(E_l^2+  (l')^4 \bigr) e^{\kappa_1(z-y)}dz+  M_2 \int_{Y_1}^{y}
\bigl(E_l^2+  (l')^4 \bigr)  e^{\kappa_2(z-y)}dz
\end{equation*}
for some  positive constants $M_1, M_2$.
 It is easy to see that
$$
\frac{d}{dz} [\bigl(E_l^2+  (l')^4 \bigr) e^{ \frac{1}{2}
\kappa_1(z-y)}] <0, \quad y > Y_1,
$$
and
$$
\frac{d}{dz} [\bigl(E_l^2+  (l')^4 \bigr) e^{ \frac{1}{2}
\kappa_2(z-y)}] >0, \quad y > Y_1
$$
when $Y_1$ is sufficiently large.

Hence we derive
\begin{equation*}
\begin{split}
& \|v\|^2 \le  M_1 e^{\kappa_1 y}+ M_2 \int_{y}^{\infty}
[\bigl(E_l^2+ (l')^4 \bigr) e^{ \frac{1}{2} \kappa_1(z-y)}] \cdot
e^{ \frac{1}{2} \kappa_1(z-y)}dz \\
&+ M_2 \int_{Y_1}^{y}  [\bigl(E_l^2+  (l')^4 \bigr) e^{ \frac{1}{2}
\kappa_2(z-y)}] \cdot e^{ \frac{1}{2} \kappa_2(z-y)} dz\\
& \le  M_1 e^{\kappa_1 y} + M_0  \bigl(E_l^2+  (l')^4 \bigr)
\end{split}
\end{equation*}
for some positive constant $M_1, M_0$.

\vskip0.2in

 \subsubsection{ Derivation of asymptotic formula for
$2l=l_2-l_1$.}

\vskip0.2in

 Now we can write \eqref{ode1}, \eqref{ode2} as

\begin{equation}\label{ode5}
\left\{
\begin{split}
&cl_1'+l_1''= -A  e^{-2\mu l(y)} \bigl(1+o(1)\bigr) + o(1) \bigl(
l_2'-l_1' \bigr)+ o(1) l_2''\\
& cl_2'+l_2''=  A  e^{-2\mu l(y)} \bigl(1+o(1)\bigr) + o(1) (
l_2'-l_1')+ o(1) l_1''.
\end{split}
\right.
\end{equation}
From \eqref{ode5} we can deduce
\begin{equation}\label{single}
\bigl( c+o(1)  \bigr) l'+l''= \bigl(A+o(1)\bigr) e^{-2\mu l(y)}.
\end{equation}
As in \cite{chen}, we can define $Q(y)= e^{2\mu l(y)}$, which
satisfies
\begin{equation}\nonumber
2\mu A +o(1) = \bigl( c+ o(1) \bigr) Q'+ Q''.
\end{equation}
Solving this equation explicitly, we obtain
$$
Q'(y)= \frac{ 2\mu A}{c}+ o(1), \quad Q(y)= \frac{ 2\mu A}{c}  y+
o(y).
$$
Hence we derive
\begin{equation}\nonumber
\begin{split}
&l(y)= \frac{1}{2\mu} \ln(y) +   \frac{1}{2\mu}  \ln( \frac{ 2\mu
A}{c} ) +o(1)\\
& l'(y)= \frac{1+o(1) }{2\mu y},  \quad E_l=\frac{ c+o(1)}{2\mu A y}, \quad l''(y)=\frac{o(1) }{2\mu y} \\
& 0> l_1'(y) \ge  -2 l'(y) =- \frac{1+o(1) }{\mu y}, \quad 0<
l_2'(y) \le 2 l'(y) = \frac{1+o(1) }{\mu y}.
\end{split}
\end{equation}

Hence, we obtain an explicit  estimate for $\|v\|$ in term of $y$
\begin{equation}\label{v}
\|v\|^2  \le  \frac{O(1)}{y^{2}}.
\end{equation}

\vskip0.2in

 \subsubsection{ Derivation of asymptotical formulas for
$l_i, i=1, 2$.}

\vskip0.2in

Now we shall examine more carefully the ordinary differential
equations \eqref{ode1} and \eqref{ode2}.

Define
$$
w(x, y):= u(x, y )-g(x-l_1(y)), \,\,\,  (x, y) \in D:=\{ |x-l_1(y)|
\le l(y)/2, \,\, y \ge Y_1\}
$$
Then $w$ satisfies  a similar equation as \eqref{eq-v}  in $D$ with
$v$ replaced by $w$:
\begin{equation}\label{eq-w}
\begin{split}
&w_{xx}+ w_{yy}+c w_{y} -\bigl((F'(w+g(x-l_1))-F'(g(x-l_1))\bigr)\\
&+ g''(x-l_1) (l_1')^2-g'(x-l_1) (c l_1' + l_1'')=0, \quad (x, y)
\in D.
\end{split}
\end{equation}

It is also easy to see that
\begin{equation}\nonumber
\|w(\cdot, \cdot-l_1)\|_{L^2([l_1-2 , l_1+ 2])} \le \|v\|+
\|\phi-g(\cdot-l_1)\|_{L^2([l_1-2, l_1+2])} \le \|v\|+ O(1) E_l \le
O(1)y^{-1}.
\end{equation}
and
\begin{equation}\nonumber
\|w(\cdot, \cdot-l_1)\|_{L^p([l_1-2 , l_1+ 2])}  \le O(1) \cdot
y^{-1}
\end{equation}
for any $p>2$.

Then, for any fix $y_0>Y_1$, using the standard  $L^p$ interior
estimate of elliptic equations for \eqref{eq-w}  and the Sobolev
inequality in $ [l_1(y_0)-2 , l_1(y_0)+ 2]\times [y_0-2, y_0+2]$, we
obtain
$$
\|\nabla w(\cdot, y)\|_{L^\infty ([l_1-1 , l_1+ 1])} \le O(1) \cdot
y^{-1}, \quad  y \ge Y_1
$$
and hence
$$
\|\nabla v(\cdot, y)\|_{L^\infty ([l_1-1 , l_1+ 1])}+ \| v(\cdot,
y)\|_{L^\infty ([l_1-1 , l_1+ 1])} \le O(1) \cdot y^{-1}, \quad  y
\ge Y_1.
$$
Using the standard $L^p$ interior estimate of elliptic equations for
\eqref{eq-v} outside $\bar \Gamma:=\{ (x, y):
|x-l_1(y)|+|x-l_2(y|<1, y >Y_1\}$  as well as the above estimate in
$\bar \Gamma$, we can also obtain
$$
\|\nabla v(\cdot, y) \| +  \|\nabla v(\cdot, y)\|_{L^\infty(\Er)}
+\|v(\cdot, y)\|_{L^\infty(\Er)} \le O(1) \cdot y^{-1}, \,\, y \ge
Y_1.
$$

Now we use re-examine \eqref{secondderivative}, \eqref{projection},
\eqref{J1} and \eqref{J2},  and obtain
\begin{equation}\label{ode1-final}
cl_1'+l_1''= -\frac{E_l}{\be}+O(1) \cdot  y^{-2}, \quad y>Y_1.
\end{equation}
Similarly, we can derive
\begin{equation}\label{ode2-final}
cl_2'+l_2''= \frac{E_l}{\be}+O(1) \cdot  y^{-2}, \quad y>Y_1.
\end{equation}
Hence, we have
\begin{equation*}\label{ode3-final}
c(l_1+l_2)'+(l_1+l_2)''=O(1)  \cdot y^{-2}, \quad y>Y_1
\end{equation*}
and therefore
 $$
l_1+l_2= O(1) \cdot y^{-1}, \quad y>Y_1.
$$

 This leads to
\begin{equation}\label{asymptotics-l}
\left\{
\begin{split}
&l_1(y)= -\frac{1}{2\mu} \ln(y) - \frac{1}{2\mu}  \ln( \frac{ 2\mu
A}{c} ) +B+o(1)\\
&l_2(y)= \frac{1}{2\mu} \ln(y) +   \frac{1}{2\mu}  \ln( \frac{ 2\mu
A}{c} ) +B +o(1)
\end{split}
\right.
\end{equation}
for some constant $B$.  Lemma 2.7  then follows directly with
$$C_1=- \frac{1}{2\mu}  \ln( \frac{ 2\mu
A}{c} ) +B,\quad C_2=-\frac{1}{2\mu}  \ln( \frac{ 2\mu A}{c} ) +B.
$$

\subsection{The moving plane procedure}

\vskip0.2in

 In this subsection, we shall use the moving plane method
to finish the proof of Theorem 1.1. Due to the fact that the
asymptotical behavior of $u$ is not homogeneous near infinity, in
particular, there is a transition layer along  $\Gamma$, the classic
moving plane method has to be carefully modified.   Indeed, we have
to use the exact asymptotical formulas of the 0-level sets  $x
=k_i(y), i=1, 2$ near infinity as well the asymptotical behavior of
$u$ along these curves.

Define $ u_\lambda(x, y):= u(2\lambda-x, y)$   and
 $w_\lambda:=u_{\lambda} -u$ in $ D_{\lambda}:=\{ (x, y) :  x\ge \lambda, \, y \in \Er \} $.

\begin{lemma}
When $ \lambda $ is sufficiently large,   there holds $w_\lambda
>0$ in $ D_{\lambda}$.
\end{lemma}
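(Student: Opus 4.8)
The plan is to run the method of moving planes, in the modified form forced by the non-uniform behaviour of $u$ at infinity, with the exact level-set asymptotics of Lemma \ref{lemma2.7} as the decisive input. Since \eqref{eq-wave} has no $u_x$ term when $n=2$, the reflection $u_\lambda$ is again a solution, so $w_\lambda=u_\lambda-u$ satisfies the linear elliptic equation
\[
\Delta w_\lambda+c\,\partial_y w_\lambda-b_\lambda w_\lambda=0\quad\text{in }D_\lambda,\qquad b_\lambda(x,y):=\int_0^1F''\bigl(tu_\lambda+(1-t)u\bigr)\,dt\in L^\infty(D_\lambda),
\]
with $w_\lambda=0$ on $\partial D_\lambda=\{x=\lambda\}$ and $|w_\lambda|\le2$. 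It suffices to prove $w_\lambda\ge0$: then, since $w_\lambda\not\equiv0$ --- on the part of $\{u=0\}$ in the interior of $D_\lambda$ one has $w_\lambda=u(2\lambda-k_2(y),y)>0$, because $k_1(y)+k_2(y)$ is bounded (Lemma \ref{lemma2.7}), so $2\lambda-k_2(y)$ lies in the pocket $(k_1(y),k_2(y))$ for $\lambda$ large --- the strong maximum principle for $\Delta+c\,\partial_y-b_\lambda^+$, of which $w_\lambda$ is then a non-negative supersolution, upgrades this to $w_\lambda>0$ in the interior.

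For $\lambda$ large the picture is governed by the bound $k_1(y)+k_2(y)=C_1+C_2+o(1)$ together with $k_2(y)-k_1(y)\to\infty$ (and their analogues at the $\alpha^\pm$-levels). Thus $\{u=0\}$ meets $\overline{D_\lambda}$ only in the upper arm $\{(k_2(y),y):y\ge y_*\}$ of the right branch, where $k_2(y_*)=\lambda$ and $y_*\to\infty$; the $0$-level set of $u_\lambda$ inside $D_\lambda$ is the reflected left branch $\{x=2\lambda-k_1(y)\}$, which lies a distance $2\lambda-C_1-C_2+o(1)$ \emph{to the right} of the right branch of $u$ (so $u\approx-1$ there), together with the lower arm $\{x=2\lambda-k_2(y):Y_0<y\le y_*\}$. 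Away from fixed neighbourhoods of these curves $w_\lambda$ is controlled directly: where $u$ and $u_\lambda$ both lie in $[-1,\alpha^-]$ or both in $[\alpha^+,1]$ one has $b_\lambda\ge\mu_0>0$; where $u\le\alpha^-$ and $u_\lambda\ge\alpha^+$ one has $w_\lambda\ge\alpha^+-\alpha^->0$; and the configuration $\{u\ge\alpha^+\}\cap\{u_\lambda\le\alpha^-\}$ is empty in $D_\lambda$. On a neighbourhood of a layer I would use Lemma \ref{profile} and Lemma \ref{exponential1}: $u(x,y)\approx g\bigl(k_2(y)-x\bigr)$ near the right branch of $u$, and at the reflected point $u_\lambda(x,y)=u(2\lambda-x,y)\approx g\bigl(k_2(y)-(2\lambda-x)\bigr)$ (the other branch interfering only exponentially little, since $2\lambda-C_1-C_2$ is large); since $g$ is strictly increasing and $\bigl(k_2(y)-(2\lambda-x)\bigr)-\bigl(k_2(y)-x\bigr)=2(x-\lambda)\ge0$, this forces $w_\lambda\ge0$, robustly where the layer is a fixed distance from $\partial D_\lambda$ --- the gain $2(x-\lambda)$ being $O(1)$ there and dominating the $o(1)$ errors; near the reflected left branch it is still easier, as there $u\approx-1$ while $u_\lambda\ge\alpha^-$.

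What remains is a bounded zone near the point $(\lambda,y_*)$ where the right branch of $u$ enters $D_\lambda$: there $u$, $u_\lambda$ and their reflections crowd together, $b_\lambda$ need not be non-negative, and $w_\lambda$ is small. To turn all of the above into a clean argument I would divide by a positive supersolution $\varphi$ of $\Delta+c\,\partial_y-b_\lambda$ built from the Jacobi fields $g'\bigl(x-k_i(y)\bigr)$ and $g'\bigl(2\lambda-x-k_i(y)\bigr)$ of the four layers plus a small constant --- $\varphi>0$ being a supersolution because $g'$ solves the linearized one-dimensional equation while $k_i',k_i''\to0$ and $b_\lambda\to F''(\pm1)>0$ off the layers --- so that $z:=w_\lambda/\varphi$ obeys the maximum principle on $D_\lambda$, vanishes on $\partial D_\lambda$, and has $\liminf z\ge0$ at infinity (this last using the geometric picture above); hence $z\ge0$, i.e.\ $w_\lambda\ge0$, and then $w_\lambda>0$ by the first paragraph. (Near $(\lambda,y_*)$ one may instead argue with a maximum principle on a thin strip $\{\lambda<x<\lambda+\delta_0\}$, $\delta_0$ fixed small, once $w_\lambda\ge0$ is known on $\{x=\lambda+\delta_0\}$.)

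The main obstacle is exactly this crowded zone around $(\lambda,y_*)$: one has to locate the level curves of $u$ and $u_\lambda$ and the transition profiles with enough precision that the ``reflection pushes the layer right by $2(x-\lambda)$'' mechanism --- equivalently, that $\varphi$ is a genuine supersolution there --- survives the error terms. This is precisely what the \emph{explicit} formulas of Lemma \ref{lemma2.7}, rather than the merely qualitative Lemma \ref{slope1}, supply; and the structural fact that makes reflection about \emph{every} far line monotone is the boundedness of $k_1+k_2$, i.e.\ that the two-layer profile of $u$ does not drift.
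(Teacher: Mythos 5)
Your proof has the same skeleton as the paper's: reduce to $w_\lambda\ge 0$, upgrade by the strong maximum principle, and treat separately a neighbourhood of the transition layers (where positivity comes from the profile asymptotics, the monotonicity of $g$, and the boundedness of $k_1+k_2$ from Lemma \ref{lemma2.7}) and the ``pure phase'' region (where one exploits $F''\ge\mu_0$). You are also right that the explicit formulas of Lemma \ref{lemma2.7}, not just Lemma \ref{slope1}, are the decisive input. Where you diverge is in how the maximum principle is run on the unbounded pure-phase region. The paper builds no global supersolution: it fixes $K,Y_2$ so that $w_\lambda>0$ on $\{x<2\lambda-k_1(y)+K,\ y\ge Y_2\}$ (your layer comparison, which the paper in fact only asserts from Lemmas \ref{profile} and \ref{exponential1} --- your version is more explicit) and $u<\alpha^-$ on the complementary part of $D_\lambda$; then, assuming $\inf w_\lambda<0$, it takes a minimizing sequence $(x_m,y_m)$ in that complement, notes that $u_\lambda(x_m,y_m)<u(x_m,y_m)<\alpha^-$ there, so the coefficient $F''(\xi)$ in \eqref{elliptic-w} is $\ge\mu_0>0$ precisely at the points that matter, translates, and kills the limiting negative interior minimum. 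This is lighter than your scheme: positivity of the zeroth-order coefficient is needed only where $w_\lambda<0$, where it is automatic from $u_\lambda<u\le\alpha^-$, and nothing need be said about $b_\lambda$ near the layers of $u_\lambda$ in the lower half-plane.

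The one step of yours I would not accept as written is the claim that $\varphi$, built from the Jacobi fields $g'$ plus a small constant, is a supersolution of $\Delta+c\,\partial_y-b_\lambda$ \emph{on the layers}. There $g'\bigl(x-k_i(y)\bigr)$ is approximately annihilated by $\Delta+c\,\partial_y-F''(u)$, but your equation carries $b_\lambda=\int_0^1F''\bigl(tu_\lambda+(1-t)u\bigr)\,dt$, which on a layer of $u$ where $u_\lambda\approx 1$ equals $\bigl(F'(1)-F'(u)\bigr)/(1-u)$, not $F''(u)$; the supersolution inequality requires $F''(u)(1-u)+F'(u)\le 0$, and for $F=\frac14(1-u^2)^2$ this quantity equals $-(u-1)^2(2u+1)$, which is positive wherever $u<-1/2$. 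So $\varphi$ fails to be a supersolution on the outer half of each layer. The failure is harmless for the conclusion --- exactly there $w_\lambda\ge\alpha^+-\alpha^->0$ by your phase-separation observation --- but it means the clean ``divide by $\varphi$ on all of $D_\lambda$'' argument must be restricted to the set where $w_\lambda$ could be small, which collapses your proof back onto the paper's two-region dichotomy; at that point the translation-to-a-limit argument is the cheaper way to finish.
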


\begin{proof}
When $\lambda> \lambda_0$ is sufficiently large, by Lemma 2.7 we
know that
$$
k^{\lambda}_1(y): = 2 \lambda -k_1(y) \ge k_2(y), \quad \forall y
\ge Y_1.
$$

By Lemma 2.3 and Lemma 2.4,  we see that there exist constants $K>0,
Y_2>Y_1$ and $\lambda_1$  sufficiently large such that  when
$\lambda> \lambda_1$,  there hold  $w_{\lambda}
>0$  in $D_{K, Y_2, \lambda} =\{ (x, y) \in D_{\lambda}:  x <
k^{\lambda}_1(y)+K, \, y\ge Y_2\}$ and
 $ u<\alpha^-$  in  $D^c_{K, Y_2, \lambda}: =\{ (x, y) \in
D_{\lambda}:  x > k^{\lambda}_1(y)+K, \, y\ge Y_2 \, \text{  or }
\,\, \forall x \ge \lambda,  \, y \le Y_2 \}$.  Note that $F''(s)
>\mu_0>0$ for $ s \in (-1, \alpha^-]$ by the definition of
$\alpha^-$.

We claim that $w_\lambda \ge 0$ in $D_{\lambda}$ for $\lambda>
\lambda_1$.  If it is not true,  there exists a sequence of points
$\{(x_m, y_m)\}_{m=1}^\infty \in D^c_{K,Y_2, \lambda} $ such that
$$
\lim_{m \to \infty} w_\lambda (x_m, y_m)=  \lim_{m \to \infty}
\bigl( u_\lambda(x_m, y_m)-u(x_m, y_m) \bigr) =\inf_{D^c_{K, Y_2,
\lambda}} w_\lambda(x, y)<0.
$$
It can be seen  that $u_{\lambda}(x_m, y_m)< \alpha^-$ when $m $ is
large enough.  Then we can follow the standard translating arguments
to obtain a contradiction. Define $w^m_\lambda(x, y):= w_\lambda(
x+x_m, y+y_m)$ in $D^c_{K, Y_2, \lambda}-(x_m, y_m)$. Then $
w^{m}_\lambda $ converges to $w^{\infty}_\lambda (x, y)$ in
$C^3_{loc} (D^{\infty})$ for some piecewise Lipschitz  domain $
D^{\infty}$ in $\Er^2$ which contains a small ball centered at the
origin. Furthermore, $w^{\infty}_\lambda$ attains its negative
minimum at the origin and satisfies a linearized equation
\begin{equation}\label{elliptic-w}
w_{xx}+ w_{yy}+cw_{y}-F''(\xi(x, y)) w=0, \quad (x, y) \in D^\infty
\end{equation}
where $\xi(x, y)=su(x, y)+(1-s)u_{\lambda}(x, y)$ for some $s \in
(0, 1)$  and $F''(\xi(0,0))
>\mu_0>0$. This is a contradiction, which leads to the claim.  Then
the lemma follows from the  strong maximum principle (or the Harnack
inequality) applied to an elliptic equation similar to
\eqref{elliptic-w} which is satisfied by $w_\lambda$.

\end{proof}

Now we define
 $$ \Lambda=\inf\{ \lambda :   u_\lambda(x, y) > u(x, y), (x, y)
\in D_\lambda\}.
$$

\begin{lemma} There holds
$$  \Lambda =(C_1+C_2)/2 $$
where $C_1, C_2$ are as in Lemma 2.7.
\end{lemma}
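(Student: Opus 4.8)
The plan is to prove the two inequalities $\Lambda\ge \tfrac{C_1+C_2}{2}$ and $\Lambda\le \tfrac{C_1+C_2}{2}$ separately, in both steps feeding in the sharp asymptotics of Lemma~\ref{lemma2.7} together with the profile convergence of Lemma~\ref{profile}. Throughout set $w_\lambda=u_\lambda-u$. Since \eqref{eq-wave} contains no odd order derivative in $x$, $u_\lambda$ solves the same equation as $u$, so $w_\lambda$ satisfies a linear elliptic equation $\Delta w_\lambda+c\,(w_\lambda)_y-F''(\xi_\lambda)\,w_\lambda=0$ with $\xi_\lambda$ between $u$ and $u_\lambda$ and bounded coefficients; moreover, since $\Lambda$ is an infimum and $\lambda\mapsto u_\lambda$ is continuous in $C^3_{loc}$, one has $w_\Lambda\ge 0$ in $D_\Lambda$.

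For the lower bound, suppose to the contrary that $c_*:=C_1+C_2-2\Lambda>0$. For $y$ large, $k_1(y)\to-\infty$, so the reflected point $p(y):=2\Lambda-k_1(y)$ lies in $D_\Lambda$; since its mirror image across $\{x=\Lambda\}$ is $k_1(y)$, which is on $\Gamma$, the inequality $w_\Lambda\ge 0$ gives
$$
u(p(y),y)\le u_\Lambda(p(y),y)=u(k_1(y),y)=0 .
$$
On the other hand Lemma~\ref{lemma2.7} gives $k_2(y)-p(y)=k_1(y)+k_2(y)-2\Lambda=c_*+o(1)$, so $p(y)$ lies at asymptotic horizontal distance $c_*>0$ to the left of the right branch $\{x=k_2(y)\}$, on the side where $u\to 1$; translating that branch point to the origin and applying Lemma~\ref{profile} (whose limit profile there is $g(-x)$) yields $u(p(y),y)\to g(c_*)>0$, a contradiction. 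Hence $\Lambda\ge\tfrac{C_1+C_2}{2}$.

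For the upper bound, assume $\Lambda>\tfrac{C_1+C_2}{2}$ and aim to contradict the minimality of $\Lambda$ by pushing the plane a little further left. First, $w_\Lambda\not\equiv 0$: otherwise $u$ would be even about $x=\Lambda$, the two branches of $\Gamma$ would reflect onto one another, so $2\Lambda-k_1(y)=k_2(y)$ for large $y$, and Lemma~\ref{lemma2.7} would force $2\Lambda=C_1+C_2$, contrary to assumption. As $w_\Lambda\ge 0$, $w_\Lambda\not\equiv 0$ and $w_\Lambda$ solves the linear equation above, the strong maximum principle gives $w_\Lambda>0$ in $\{x>\Lambda\}$ and the Hopf lemma gives $\partial_x w_\Lambda>0$ on $\{x=\Lambda\}$. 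It then suffices to show $w_{\Lambda-\ve}>0$ in $D_{\Lambda-\ve}$ for all small $\ve>0$. I would decompose $D_{\Lambda-\ve}$ as in the previous lemma. (a) On a strip $\{\Lambda-\ve\le x<k_1^{\Lambda-\ve}(y)+K,\ y\ge Y_2\}$ about the reflected left branch, positivity of $w_{\Lambda-\ve}$ follows exactly as in the previous lemma from the asymptotic profile of $u$ near $\Gamma$ (Lemmas~\ref{profile} and~\ref{exponential1}) together with the fact that, since $\Lambda-\ve>\tfrac{C_1+C_2}{2}$, Lemma~\ref{lemma2.7} gives $k_1^{\Lambda-\ve}(y)-k_2(y)\ge\delta>0$ for $y\ge Y_2$, with $\delta,K,Y_2$ chosen uniformly for $\ve$ small. (b) On a fixed large ball $\{|(x,y)|\le R_0,\ x\ge\Lambda-\ve\}$, $w_{\Lambda-\ve}>0$ off the plane $\{x=\Lambda-\ve\}$ for $\ve$ small, since $w_{\Lambda-\ve}\to w_\Lambda$ in $C^1_{loc}$ while $w_\Lambda>0$ in $\{x>\Lambda\}$ and $\partial_x w_\Lambda>0$ on $\{x=\Lambda\}$. (c) On the remaining unbounded part $D^c$ one has $u<\alpha^-$ once $R_0$ is large (to the right of the right transition layer, or where $y\to-\infty$); hence at any point where $w_{\Lambda-\ve}<0$ one has $u_{\Lambda-\ve}<u<\alpha^-$, so $F''(\xi_{\Lambda-\ve})\ge\mu_0>0$, and since $w_{\Lambda-\ve}\ge 0$ on $\partial D^c$ by (a) and (b), the translating-and-limit argument of the previous lemma rules out a negative infimum of $w_{\Lambda-\ve}$ on $D^c$. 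Combining (a)--(c) gives $w_{\Lambda-\ve}>0$ in $D_{\Lambda-\ve}$, the desired contradiction, so $\Lambda\le\tfrac{C_1+C_2}{2}$.

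The hard part is Step~2(a)--(c): because the asymptotics of $u$ are not homogeneous --- there is a genuine transition layer along $\Gamma$ --- the classical one-shot moving plane argument does not apply, and one must glue the near-$\Gamma$ estimate, the compactness argument on the bounded ball, and the good-sign maximum principle in the far field, with all the constants ($K$, $Y_2$, $R_0$, and the threshold on $\ve$) chosen uniformly over a neighborhood of $\lambda=\Lambda$. That uniformity is precisely what the sharp $o(1)$ formulas of Lemma~\ref{lemma2.7}, and the profile convergence of Lemma~\ref{profile}, are there to supply; by comparison Step~1 is immediate once those formulas are in hand.
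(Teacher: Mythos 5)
Your proof is correct in outline and uses the same engine as the paper's: the moving plane method fed by the sharp asymptotics of Lemma~\ref{lemma2.7}, the profile convergence of Lemma~\ref{profile}, the strong maximum principle and the Hopf lemma. Your Step~1 is a welcome expansion of what the paper dismisses in one line (``by Lemma 2.3 and Lemma 2.7 we can easily see that $\Lambda>(C_1+C_2)/2$''). Where you genuinely diverge is the continuation step. The paper never tries to prove $w_{\Lambda-\ve}>0$; it takes $\lambda_m\nearrow\Lambda$ with $\inf_{D_{\lambda_m}}w_{\lambda_m}<0$, shows the infimum is \emph{attained} at points $(x_m,y_m)$ confined to a fixed compact set, and passes to the limit to get a boundary point of $D_\Lambda$ where $\partial_x w_\Lambda=0$, contradicting Hopf. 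The confinement is the whole trick: at an interior negative minimum the equation forces $F''(\xi)\le 0$, hence $\xi\in(\alpha^-,\alpha^+)$, which pins both $u$ and $u_{\lambda_m}$ near their transition layers; together with $\Lambda>(C_1+C_2)/2$ and Lemma~\ref{lemma2.7} this bounds $(x_m,y_m)$, and at such a point the profile comparison gives a \emph{definite} positive lower bound $g(s+2\delta)-g(s)$ with $s$ ranging over a compact set. The paper thus only needs information at one well-located point, while you need a global positivity statement on an unbounded domain.

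That global statement is where your decomposition has a soft spot. The set $\{\Lambda-\ve\le x\le k_2(y)-K,\ y\ge R_0\}$ lies inside your strip (a), but there both $u$ and $u_{\Lambda-\ve}$ are close to $1$ and $w_{\Lambda-\ve}$ is exponentially small; the comparison ``exactly as in the previous lemma'' degenerates, since the leading term $g\bigl(k_1^{\Lambda-\ve}(y)-x\bigr)-g\bigl(k_2(y)-x\bigr)$ vanishes at $x=\Lambda-\ve$ and is swamped by the $o(1)$ profile errors for large $y$. This set is not in your ball (b) (unbounded $y$) and not in your (c) as written, because there $u>\alpha^+$ rather than $u<\alpha^-$. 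The repair is easy and worth making explicit: on that set any $\xi$ between $u$ and $u_{\Lambda-\ve}$ lies in $[\alpha^+,1]$, so $F''(\xi)\ge\mu_0>0$ and the same good-sign maximum principle as in your (c) applies; i.e., regions (a) and (c) should be cut along the transition layers of both $u$ and $u_{\Lambda-\ve}$, not by the single condition $u<\alpha^-$. With that adjustment your argument closes and stands as a legitimate alternative packaging of the paper's proof.
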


\begin{proof}
We shall prove this lemma by contradiction. Suppose the lemma does
not hold.  By Lemma 2.3 and Lemma 2.7, we can easily see that
$\Lambda
>(C_1+C_2)/2$ and $w_{\Lambda}>0, \forall (x, y) \in D_{\Lambda}$.
 Then there exists a sequence of numbers $\{\lambda_m\} $
 such that $\lambda_m <
\Lambda $,  and $\lim_{m \to \infty} \lambda_m =\Lambda$ and the
infimum of $ w_{\lambda_m}$ in $D_{\lambda_m}$ is negative.  Using
Lemma 2.3, Lemma 2.4, Lemma 2.7 and  the translating arguments in
the proof of Lemma 2.9, we can show that  the infimum of $
w_{\lambda_m}$ in $D_{\lambda_m}$ is achieved at a point $(x_m,
y_m)$, i.e.,
\begin{equation}\label{infimum}
w_{\lambda_m} (x_m, y_m)=\inf_{D_{\lambda_m}} w_{\lambda_m}<0.
\end{equation}
Since $w_{\lambda_m}$ satisfies an elliptic equation similar to
\eqref{elliptic-w} with $\xi(x_m, y_m)= s u(x_m, y_m)+(1-s)
u_{\lambda_m}(x_m, y_m)$ for some $s \in (0,1)$,  by the strong
maximum principle we know that $u (x_m, y_m)> \alpha^-$ and hence
 $y_m> -K_1$  and $x_m-k_1(y_m)<K$  if $y_m>Y_1$ for some constant $K, K_1>0$
independent of $m$.  By Lemma 2.3, Lemma 2.7 and the assumption
$\Lambda> (C_1 +C_2)/2$,  we know $y_m <K_2$ for some constant $K_2$
independent of $m$. Therefore there exists a subsequence of $\{m \}$
(still denoted by the same) such that $(x_m, y_m)$ converges to
$(x_0, y_0) \in D_{\Lambda}$  and $w_{\lambda_m} $ converges  to
$w_{\Lambda}$ in $C^3_{loc}(D_{\Lambda})$ as well as  in
$C^3(B_1(x_0, y_0)\cap \bar{D_{\Lambda}})$. It is easy to see that
$\frac{\partial}{\partial x} w_{\Lambda}(x_0, y_0)=0$. Furthermore,
$w_{\Lambda} $ satisfies an elliptic equation similar to
\eqref{elliptic-w} in $D_{\Lambda}$, hence $(x_0, y_0)$ must be on
the  boundary  of $D_{\Lambda}$.   Then by the Hopf Lemma, we have
$\frac{\partial}{\partial x} w_{\Lambda}(x_0, y_0)<0$. This is a
contradiction, which proves the lemma.
\end{proof}

We note that  $ u_\Lambda \ge u$ in $D_{\Lambda}$ and
$u_{x}(\lambda, y)=-\frac{1}{2} \frac{\partial }{\partial x}
w_{\lambda}(\lambda, y)>0, \forall y \in \Er$ when $\lambda>
\Lambda$.  Similarly, we can use the moving plane method from the
left, i.e., repeating the above procedure for $w_{\lambda}:=$ in
$D^-_{\lambda}:=\{ (x, y): x<\lambda\}$, and conclude $ u_\Lambda
\ge u$ in $D^-_{\Lambda}$. Therefore, Theorem 1.1 is proven.

The uniqueness of the traveling wave solutions  (up to translation)
 still  remains an open question.

\section{Classification of Traveling Wave Solutions for the
Unbalanced Allen-Cahn Equation in $\Er^2$}

In this section, we shall assume that the double well potential $F$
in the Allen-Cahn equation \eqref{allen-cahn} is unbalanced, i.e.,
$F$ satisfies \eqref{doublewell} and  $F(1)>F(-1)=0$. In this case,
 one dimensional traveling wave solution $g$ to \eqref{1d-ac} exists
 for a unique  $c_0>0$  which only depends  on
 $F$, and  $g$  is unique up to translation.
 It is easy to see that a rotation of the trivial extension of  $g$
 to two dimensional plane is also a traveling wave solution  of \eqref{eq-wave}
 for some constant $c$. Indeed,  if  $\alpha \not =\pi/2, 3\pi/2$, then $u(x, y)= g(y
 \cos{\alpha}-\sin{\alpha})$  satisfies \eqref{eq-wave}
 with $c=\frac{c_0}{\cos{\alpha}}$.
 In addition to the one dimensional traveling wave solutions,
 so called $V$-shaped two dimensional traveling wave solutions are  shown
 to exist in \cite{hmr2}, \cite{nt1}.  These solutions are monotone in $y$  and even with respect to $x$
 after a proper translation. The $0$-level set  of such
 solutions are asymptotically two straight rays  forming a shape of
 $V$. The existence result may be stated as follows.

 \begin{theoremb} [Hamel, Monneau, Roquejoffre  \cite{hmr2};  Ninomiya,  Taniguchi, \cite{nt1};
 2005]
For each $\alpha \in [0, \pi/2)$
 there exists a  solution $u_{\alpha}$  of \eqref{eq-wave}, \eqref{monotone} and \eqref{limit-tw} such that
 $c=\frac{c_0}{\cos{\alpha}}$ and $ u_{\alpha} $  is even in $ x$
and decreasing in $|x|$.  The $0$-level set of $u$  is a globally
Lipschitz graph of  $y = k(x) $  and $k(x)= (\tan{\alpha}+o(1)) |x|$
as $|x| $ goes to infinity.
\end{theoremb}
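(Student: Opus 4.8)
The plan is to produce $u_\alpha$ by the sub- and supersolution method, using as building blocks the rotated copies of the one dimensional front $g$ of \eqref{1d-ac}. Fix $\alpha\in[0,\pi/2)$ and set $c=c_0/\cos\alpha$. For either sign, the tilted planar front
\[
\phi_\pm(x,y):=g\bigl(\pm x\sin\alpha+y\cos\alpha\bigr)
\]
solves \eqref{eq-wave} with this $c$, since $\Delta\phi_\pm=g''$ and $\partial_y\phi_\pm=\cos\alpha\,g'$, so that \eqref{eq-wave} becomes $g''+(c\cos\alpha)g'-F'(g)=0$, which is exactly \eqref{1d-ac}. The fronts $\phi_+,\phi_-$ are interchanged by $x\mapsto-x$, each is strictly increasing in $y$ and valued in $(-1,1)$, and their $0$-level lines are the two lines through the origin of slopes $\mp\tan\alpha$.

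I would take as supersolution $\overline u:=\min\{\phi_+,\phi_-\}$ and build the subsolution from the superposition $\underline u_0:=\phi_++\phi_--1$. The function $\overline u$ is even in $x$, strictly increasing in $y$, bounded by $1$, tends to $\pm1$ as $y\to\pm\infty$ for each fixed $x$, has $0$-level set exactly the $V$-shaped curve $\{y=|x|\tan\alpha\}$, and is a generalized supersolution of \eqref{eq-wave}: the minimum of two solutions is a supersolution, and along the switching line $\{x=0\}$ the kink of $\overline u$ in $x$ opens downward, so $\partial_{xx}\overline u$ carries a nonpositive measure there. One checks $\underline u_0\le\overline u$ (because $\phi_\pm<1$) and that, after truncating $\underline u_0$ below by $-1$, its $0$-level set is a $V$ with the same asymptotic slopes $\pm\tan\alpha$ (on the ray $y=x\tan\alpha$, $x\to+\infty$, one has $\phi_+\to1$, $\phi_-=g(0)=0$, hence $\underline u_0\to0$). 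Substituting $\underline u_0$ into \eqref{eq-wave} leaves the residual $F'(\phi_+)+F'(\phi_-)-F'(\phi_++\phi_--1)$, and this is the delicate point: near the wells, expanding $F'(s)=F''(\pm1)(s\mp1)+O((s\mp1)^2)$, the first-order part cancels identically while the quadratic part is in general of the wrong sign (for $F'(u)=(u-a)(u^2-1)$ it is $-F'''(1)(\phi_+-1)(\phi_--1)+\cdots<0$ near the $+1$ well), so $\underline u_0$ fails to be a subsolution. I would repair it in the usual way, adding to $\underline u_0$ an exponentially small correction, supported where the two fronts overlap and scaled to dominate the quadratic residual; this uses only the exponential decay and non-degeneracy of $g$ and is the single place where the explicit form of $F$, or a mild structural condition on it, enters. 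A cleaner route that avoids this correction is the Ninomiya--Taniguchi construction: replace the broken $V$ by a smooth convex curve $y=\varphi(x)$ with $\varphi'(x)\to\pm\tan\alpha$ and set $\overline u=g(\kappa(y-\varphi(x)))$, $\underline u=g(\kappa(y-\varphi(x)))-\eta(x,y)$ with $\kappa,\eta$ tuned so that the convexity of $\varphi$ only contributes favorably, at the price of a longer computation.

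With such a pair $\underline u\le\overline u$ in hand, I would solve \eqref{eq-wave} on balls $B_R$ with boundary data interpolating $\underline u$ and $\overline u$, keep all iterates between $\underline u$ and $\overline u$ by the maximum principle, and let $R\to\infty$ using interior elliptic estimates and a diagonal argument, obtaining the minimal solution $u$ lying above $\underline u$; then $\underline u\le u\le\overline u$ on $\Er^2$. The limits \eqref{limit-tw} are immediate, and comparing $0$-level sets as graphs $y=k(x)$ trapped between those of $\underline u$ and $\overline u$ shows $k$ is defined on all of $\Er$ with $k(x)=(\tan\alpha+o(1))|x|$ as $|x|\to\infty$; the graph $y=k(x)$ is globally Lipschitz because $u_y\ge\delta>0$ on $\{u=0\}$, which follows from the sandwich and the global $C^{2,\beta}$ elliptic bounds. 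Monotonicity \eqref{monotone} follows by sliding: for $h>0$, $u(\cdot,\cdot+h)$ is a solution with $u(\cdot,\cdot+h)\ge\underline u(\cdot,\cdot+h)\ge\underline u$, so by minimality $u(\cdot,\cdot+h)\ge u$, giving $u_y\ge0$, and then $u_y>0$ by the strong maximum principle on the linearized equation (otherwise $u_y\equiv0$, contradicting \eqref{limit-tw}). Evenness in $x$ follows similarly from $\underline u(-x,y)=\underline u(x,y)$ and minimality, and ``decreasing in $|x|$'' is then obtained by the moving-plane method applied to the reflections $u(2\lambda-x,y)$ for $\lambda\ge0$; this is much easier than in Section~2 because the front is genuinely conical and globally Lipschitz, so the planes start from $+\infty$ and can be pushed down, with the asymptotics of $k$ controlling the behavior at infinity, ending with $u_x\le0$ for $x\ge0$ and then $u_x<0$ for $x>0$ by Hopf's lemma.

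I expect the main obstacle to be precisely the construction of a genuine subsolution with the correct $V$-asymptotics: the bare superposition $\phi_++\phi_--1$ violates the differential inequality near the wells, and repairing it — or replacing the pair by the smoothed-curve construction — is the only technically substantial step. Everything else, namely existence, the level-set asymptotics, Lipschitz regularity of the level set, strict monotonicity in $y$, and the symmetry statements, then follows from standard comparison, monotone-iteration, and moving-plane arguments.
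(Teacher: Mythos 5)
Theorem B is not proved in this paper: it is quoted as a known result from \cite{hmr2} and \cite{nt1}, so there is no internal proof to compare against. Your proposal reproduces, in outline, the construction those references actually carry out: the tilted planar fronts $g(\pm x\sin\alpha+y\cos\alpha)$ as building blocks, the generalized supersolution $\min\{\phi_+,\phi_-\}$ whose zero set is exactly the $V$-curve $y=|x|\tan\alpha$, a subsolution with the same asymptotics, monotone iteration on balls to produce a minimal solution trapped in between, and sliding/reflection arguments for $u_y>0$, evenness, and decrease in $|x|$. You have also correctly isolated the one genuinely delicate step, namely that $\phi_++\phi_--1$ fails the differential inequality because the quadratic part of the residual $F'(\phi_+)+F'(\phi_-)-F'(\phi_++\phi_--1)$ has no sign near the wells; but be aware that repairing this is not a routine afterthought --- it is the bulk of the cited proofs (the exponentially small corrector must be built from the spectral gap of the linearization at $g$, or, in the Ninomiya--Taniguchi variant, the smoothed convex curve $\varphi$ must be tuned so that its curvature contributes with a favorable sign), so as written your argument has its central step deferred rather than done. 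Two smaller points deserve a sentence each: ordering the iterates requires the standard $K$-shift (rewrite $-F'(u)$ as $Ku-(Ku+F'(u))$ with $K\ge\sup|F''|$) since $F''$ changes sign, so ``the maximum principle'' alone does not compare two solutions of the semilinear problem; and the bound $u_y\ge\delta>0$ on the zero level set, which you use for the Lipschitz claim, needs the compactness argument that translates of $u$ along the level set converge to tilted planar fronts because of the sandwich $\underline u\le u\le\overline u$. With those caveats, your route is the same as the literature's rather than a new one, and the level-set asymptotics $k(x)=(\tan\alpha+o(1))|x|$ do follow from trapping $\{u=0\}$ between the zero sets of $\underline u$ and $\overline u$ as you say.
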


Furthermore, it is shown in \cite{hmr3} that such $V$ shaped
traveling wave solutions are unique for each $\alpha \in [0,
\pi/2)$. Indeed, the following classification theorem  is proven.

 \begin{theoremc} [Hamel, Monneau, Roquejoffre, 2006]
 Suppose  $u$ is a  solution to \eqref{eq-wave}, \eqref{monotone} and
\eqref{limit-tw}.  Assume further that  there exists a
   globally Lipschitz function $\psi$ such that
\begin{equation}\label{limit-hamel}
\left\{
\begin{split}
  & \liminf_{A \to +\infty, y \ge A +\psi(x)} u(x,y)>0,\\
  & \limsup_{A \to -\infty, y \le A +\psi(x)}u(x, y) <0.
  \end{split}
  \right.
  \end{equation}

Then  $c\ge c_0$,  and   $u$ must be either planar, i.e. $u(x, y)=
g(  y \cos{\alpha}\pm x \sin{\alpha } +b)$ with
$\alpha=\cos^{-1}(c_0/c) \in [0, \pi/2)$ and a constant $b$, or  $u$
is the unique even $V$-shaped traveling wave solution  $u_{\alpha}$
(up to translation in $x$ and $y$).
\end{theoremc}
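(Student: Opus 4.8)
The plan is to use the trapping hypothesis \eqref{limit-hamel} to put ourselves in the framework of Section~2 but with a \emph{globally Lipschitz} interface, and then to classify by combining a sharp description of that interface near infinity with the moving-plane/sliding method and direct comparison with the explicit planar fronts $g(y\cos\alpha\pm x\sin\alpha+b)$, where $\cos\alpha=c_0/c$. First, by \eqref{monotone} the $0$-level set of $u$ is a graph $y=\gamma(x)$, and \eqref{limit-hamel} together with \eqref{limit-tw} traps it between two fixed translates of the graph of $\psi$; since $u$ is bounded in $C^3$ and monotone in $y$, $\gamma$ is then itself globally Lipschitz. Exactly as in Lemma~\ref{exponential1} one gets exponential decay of $|u^2-1|+|\nabla u|+|\nabla^2u|$ in the distance to the interface, and as in Lemma~\ref{profile} that $u$ translated along the interface converges in $C^3_{loc}$ to a one-dimensional object; in particular the energy/Hamiltonian quantities of Section~2 make sense.

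\smallskip
Next I would show that the interface is asymptotically a union of two straight rays (or a single line) and extract the speed bound. Using the Lipschitz bound one has $\gamma'(x)\to k_\pm$ along subsequences as $x\to\pm\infty$; running the interface ODE/sliding analysis of Section~2.4--2.5 with the logarithmic profiles replaced by tilted copies of $g$ upgrades this to an honest expansion
\[
\gamma(x)=\tan\alpha\,|x|+b_\pm+o(1)\qquad(|x|\to\infty)
\]
(in the one-sided case $\gamma(x)=\tan\alpha\,x+b+o(1)$ up to sign; monotonicity in $y$ together with \eqref{limit-hamel} rules out the downward configuration). Translating $u$ to $+\infty$ along a ray produces a limiting solution of \eqref{eq-wave}, monotone in $y$, whose interface is a line; such a solution is planar, i.e. a translate of $g$ in the normal direction, and substituting $g(y\cos\alpha-x\sin\alpha)$ into \eqref{eq-wave} forces $c\cos\alpha=c_0$. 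Equivalently, the ray is stationary in the moving frame, hence in the original frame advances with normal speed $c\cos\alpha$, which must equal the intrinsic $1$-D front speed $c_0$. Since $\cos\alpha\le1$ this gives $c\ge c_0$ and $\alpha=\cos^{-1}(c_0/c)\in[0,\pi/2)$, and it pins the two asymptotic slopes to $\pm\tan\alpha$.

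\smallskip
Finally I would classify. If the two asymptotic slopes coincide, the interface is asymptotically a single line; sliding the family $g(y\cos\alpha\pm x\sin\alpha+b)$ toward $u$ and using the sharp asymptotics to produce a first touching point (interior, or at infinity via the Hopf lemma and a sweeping argument) forces $u=g(y\cos\alpha\pm x\sin\alpha+b)$, the planar case. If the slopes are $+\tan\alpha$ and $-\tan\alpha$, run the moving-plane procedure in $x$ exactly as in Section~2.5, using the explicit asymptotic formula for the two interface rays to start the reflected plane near $-\infty$ and to keep it from crossing the vertex; this yields evenness of $u$ in $x$ about its axis and $u_x<0$ beyond the axis, so $u$ is a $V$-shaped solution $u_\alpha$. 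For uniqueness for a fixed $\alpha$, translate two such solutions so that their axes and vertices asymptotically coincide and slide one in $y$; the matching linear asymptotics of the interfaces force a strict ordering at the start of the slide and, by the strong maximum principle and the Hopf lemma, exclude a tangency before the two solutions agree.

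\smallskip
The main obstacle is the behavior near infinity in the last two steps: because of the transition layer along the $V$ (or along the tilted line) the ambient state is not constant, so the classical moving-plane and sliding arguments must be coupled with a sharp asymptotic expansion of the interface --- the linear-growth analogue of Lemma~\ref{lemma2.7} --- and with correspondingly precise control of $u$ both across and along the interface. Establishing that expansion is the technical core; by contrast the speed bound $c\ge c_0$ is comparatively soft once a limiting planar front has been produced.
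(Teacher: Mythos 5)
You have written a proof for a statement that this paper does not prove: Theorem C is quoted verbatim from Hamel--Monneau--Roquejoffre \cite{hmr3} as an external input, and the paper's own contribution in Section~3 is only Lemma~3.1 (the $0$-level set of any solution of \eqref{eq-wave}, \eqref{monotone}, \eqref{limit-tw} is automatically globally Lipschitz) together with Theorem~3.2, which removes hypothesis \eqref{limit-hamel} by feeding $\psi=k$ into Theorem~C. So there is no internal proof to compare against; I can only assess your outline against the known argument of \cite{hmr3}, with which it is broadly consistent in spirit (asymptotic directions of the level set, the normal-speed relation $c\cos\alpha=c_0$, sliding/moving-plane comparison with planar fronts and with the $V$-front).

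That said, several load-bearing steps in your sketch are asserted rather than supplied. First, ``such a solution is planar'' for a blow-down limit that is monotone in $y$ with a straight-line interface is itself a one-dimensional symmetry theorem (of Berestycki--Hamel--Monneau type \cite{bhm}), not a triviality; it is the place where $c\ge c_0$ and the angle actually get pinned down. Second, your proposal to ``run the interface ODE/sliding analysis of Section~2.4--2.5 with the logarithmic profiles replaced by tilted copies of $g$'' does not transport: that machinery (the two-layer ansatz $\phi(l_1,l_2,\cdot)$, the exponentially small interaction $E_l\sim e^{-2\mu l}$, Lemma~\ref{lemma2.7}) is specific to the balanced case, where the $1$-D front is stationary and the curvature of the level set is driven entirely by tail--tail interaction. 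In the unbalanced setting the mechanism is different --- the asymptotic slope is forced by the kinematic constraint $c\cos\alpha=c_0$ and the rays are straight to leading order --- so the refined expansion $\gamma(x)=\tan\alpha\,|x|+b_\pm+o(1)$ needs its own derivation (this is the technical core of \cite{hmr3}), and the mere Lipschitz bound only gives subsequential limits of $\gamma'$, not the existence of $k_\pm$. Third, the hypothesis \eqref{limit-hamel} must be used more actively than ``trapping the level set'': it is what guarantees uniformity of the limits \eqref{limit-tw} above and below the graph of $\psi$, which is needed both for the exponential decay and to exclude degenerate blow-down limits (periodic or pulse-type $1$-D solutions) of the kind handled in the paper's own Lemma~3.1. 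As an outline of the strategy your proposal is reasonable; as a proof it leaves the hardest parts (the sharp ray asymptotics and the $1$-D rigidity of the limits) unproved.
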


We note that  for $n\ge 3$, similar conic shaped solutions are also
shown to exist in \cite{hmr2},  and the uniqueness of traveling wave
solutions with   $0$-level set being prescribed asymptotical
circular cone is proven in \cite{hmr2}. More complicated pyramidal
traveling wave solutions also exist for $n \ge 3$,  and these
solutions  are  unique when the $0$-level sets are prescribed as
given pyramidal cones at infinity (see \cite{tani1}, \cite{tani2}).

The above classification theorem is very interesting. However,  the
condition \eqref{limit-hamel} is too restrictive. We shall show that
this condition can indeed be dropped.  For this purpose, it suffices
to show that $0$-level set of $u$ must be global Lipschitz, since
the $0$-level set function  $y=k(x)$  can serve as the function
$\psi$ in \eqref{limit-hamel}.

\begin{lemma}
Assume that $u$ is a solution to \eqref{eq-wave}, \eqref{monotone}
and \eqref{limit-tw}, and the graph of $y=k(x)$ is the $0$-level set
of $u$.  Then $k(x) \in C^3(\Er)$ and $|k'(x)| \le C, x \in \Er$ for
some constant $C>0$.
\end{lemma}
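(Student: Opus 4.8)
The plan is to show that the slope of the $0$-level curve $y=k(x)$ stays bounded as $|x|\to\infty$, which since $k\in C^3$ locally (this follows from the implicit function theorem applied at points of the level set, where $u_y>0$) gives the global bound. The crucial distinction from the balanced case is that here $c_0>0$: the one-dimensional profile $g$ solves $g''+c_0g'-F'(g)=0$ with a \emph{nonzero} speed. I would argue by contradiction. Suppose there is a sequence $x_m$ with $|x_m|\to\infty$ and $|k'(x_m)|\to\infty$. Translating $u_m(x,y):=u(x+x_m,\,y+k(x_m))$ and passing to a $C^3_{loc}$ limit $u_*$ exactly as in the proof of Lemma~\ref{slope1}, one finds that the blow-down (after rescaling the runaway slope, or by noting $u_x(x_m,k(x_m))=-u_y(x_m,k(x_m))k'(x_m)$) forces $u_*$ to be a function of $x$ alone solving $u_{xx}-F'(u)=0$ with $u_*(0,0)=0$ and $u_*'(0)=0$; since $\pm1$ are the only bounded equilibria and $F(1)\ne F(-1)$, the only bounded orbit through $0$ at which $F'$ acts is the constant $u_*\equiv 0$, which is impossible because $2F(0)>0$ enters the $h_m'$ identity just as in Lemma~\ref{slope1} and yields $2F(0)R\le C$ for all $R$, a contradiction.

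The remaining point is to rule out the possibility that $k'(x_m)$ tends to $+\infty$ on one side and $-\infty$ on the other (or the same sign on both sides, as in Cases (ii)--(iv) of the balanced analysis) \emph{while} the profile is asymptotically a tilted copy of $g$. Here I would use the Hamiltonian-type identity together with the fact that, along the level curve, the translated solutions converge to $g(\tau\cdot x+ \cdot)$ for the appropriate direction — more precisely, I would exploit that if $|k'(x)|$ were unbounded the normal direction to $\Gamma$ becomes horizontal and the one-dimensional reduction gives a \emph{stationary} profile (speed zero in the limiting normal direction), which contradicts the non-degeneracy and the sign of $c_0$: a zero-speed one-dimensional connecting orbit cannot exist for an unbalanced potential. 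Concretely, repeating the $h_m$ computation on a half-line as in Lemma~\ref{slope1} produces the extra term $c\int u_y^2$, which cannot vanish, and the unbalanced energy $F(1)>0$ makes the boundary terms fail to balance.

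Once $|k'(x)|\le C$, the global Lipschitz bound $|k(x)|\le C|x|$ follows, and the $C^3$ regularity is immediate from the implicit function theorem since $u_y>0$ on $\Gamma$ and $u\in C^3$. I expect the main obstacle to be the second step: carefully setting up the blow-down so that the limiting one-dimensional equation is seen to be the \emph{stationary} Allen--Cahn equation (forcing the contradiction with $c_0>0$), rather than inadvertently obtaining a traveling-wave profile with some admissible speed. This requires tracking how the speed term $cu_y$ scales under the translation and rescaling — since $u_y(x_m,k(x_m))\to 0$ by Lemma~\ref{slope1}'s argument (integrability of $u_y^2$ still holds here), the $cu_y$ term drops out in the limit and one genuinely lands on $u_{xx}-F'(u)=0$, which has no bounded heteroclinic through a non-equilibrium point when $F(1)\ne F(-1)$.
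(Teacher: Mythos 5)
Your overall strategy (blow up along a sequence where $|k'(x_m)|\to\infty$, pass to a one--dimensional stationary limit, and contradict it with the $h$--identity) is the same as the paper's, but two of your supporting claims are wrong, and one of them hides the main work of the proof. First, the assertion that ``integrability of $u_y^2$ still holds here'' is false in the unbalanced case: repeating the computation of Lemma 2.1 now produces $h'(x)=F(1)-c\int_{\Er}u_y^2\,dy$, so $\int_a^b\int_{\Er}u_y^2\,dy\,dx$ grows linearly in $b-a$ and $u_y$ does not tend to $0$ uniformly in $x$. Fortunately this is not needed: from $u_x(x_m,k(x_m))=-u_y(x_m,k(x_m))k'(x_m)$, the boundedness of $\nabla u$ and $k'(x_m)\to\infty$ one gets $u_y(x_m,k(x_m))\to0$ directly, and then the strong maximum principle applied to $u^*_y\ge0$ (which solves the linearized equation) gives $u^*_y\equiv0$; note also that your claim $u_*'(0)=0$ does not follow from this relation ($u_x$ is merely bounded) and is not needed.

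The genuine gap is your classification of the limit: it is not true that the only bounded solution of $u_{xx}-F'(u)=0$ with $|u|\le1$ passing through $0$ is the constant $0$. Since $\theta=0$, $u=0$ is an equilibrium and in fact a center of the Hamiltonian system $u''=F'(u)$, so it is surrounded by a one--parameter family of periodic orbits $g_\alpha$ (equation \eqref{periodic-ac}), and there is in addition the homoclinic orbit $g_*$ to $+1$ (equation \eqref{limit-ac}), all of which cross $0$. (Only the heteroclinic between $-1$ and $+1$ is excluded by $F(1)\ne F(-1)$.) Ruling out these limits is the substance of the paper's proof: for the periodic case one compares the half-line quantity $\bar h_m(x)=\int_{-\infty}^0u^m_xu^m_y\,dy$ with $h$ to get $\int_{-R}^R[\frac12|g_\alpha'|^2+F(g_\alpha)]\,dx\le C+2F(1)R$, contradicting $F(g_\alpha)\ge F(\alpha)>F(1)$ for large $R$; for the homoclinic case a more delicate limit (using $|h(x+x_m)-\bar h_m(x)|\le C[1-u^*(x)]\to0$ as $x\to\pm\infty$) yields $\int_{\Er}[\frac12|g_*'|^2+F(g_*)-F(1)]\,dx\le0$, contradicting the pointwise identity $\frac12|g_*'|^2=F(g_*)-F(1)>0$. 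Your constant-limit computation also uses the balanced bound ``$2F(0)R\le C$''; in the unbalanced setting the right-hand side is $C+2F(1)R$, and the contradiction survives only because $F(0)>F(1)$ --- i.e., the constant case is just the $\alpha=0$ instance of the periodic case. The second paragraph of your proposal, invoking ``no zero-speed connecting orbit for an unbalanced potential,'' does not address these non-connecting (periodic and homoclinic) limits and so does not close the argument.
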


\begin{proof}
It is easy to see that $k(x)$ is in $C^3(\Er)$.  We shall prove the
global Lipschitz property by contradiction. Assume that there exists
a sequence $\{x_m\}$ such that $k'(x_m) \to \infty$ as $m $ tends to
infinity. Since $u_x(x, k(x))+ u_y(x, k(x)) k'(x)=0, \forall x \in
\Er $ and  $\nabla u$ is bounded in $\Er^2$,  we derive $ u_y( x_m,
k(x_m)) \to 0$ as $m$ goes to infinity.  We shall investigate the
translation of $u$ along $\bigl(x_m, k(x_m)\bigr)$. Define  $ u^m(x,
y):= u(x+x_m, y+y_m)$. Since $u$ is bounded in $C^{3, \beta}
(\Er^2)$ for some $\beta \in (0, 1)$, it is easy to see that $u^m$
(up to a subsequence ) converges  to $u^*$ in $C^3_{loc}(\Er^2)$,
and $u^*$ satisfies \eqref{eq-wave}. Hence $u^*_y(x, y)$ satisfies
the linearized equation
\begin{equation}
w_{xx}+w_{yy}+cw_y-F''(u^*) w=0, \quad (x, y) \in \Er^2.
\end{equation}
By \eqref{monotone}, we know that $u^*_y(x, y) \ge  0, \forall (x,
y) \in \Er^2$.  Since $ u^*_y(0,0)=\lim_{m \to \infty} u_y(x_m,
k(x_m))=0$,  by the strong maximum principle for elliptic equations
we obtain $u^*_y \equiv 0 $ in $\Er^2$.  Therefore,  $u^*(x,
y)=u^*(x) $ satisfies the one dimensional stationary Allen-Cahn
equation \eqref{allencahn-1d} with $|u^*(x)| \le 1, x \in \Er$ and
$u^*(0)=0$. Then  we have either

\noindent{ \bf Case I:}  $u^*(x) = g_{\alpha}(x \pm K_{\alpha}), x
\in \Er, $ where $g_{\alpha}$ is a periodic solution of the  one
dimensional Allen-Cahn equation
\begin{equation}\label{periodic-ac}
\left \{
\begin{split}
& g_{\alpha}''(x)-F'(g_{\alpha}(x))=0, \quad x \in \Er,\\
& g_{\alpha}'(0)=0, \quad  g_{\alpha}(0)=\alpha,
\end{split}
\right.
\end{equation}
with $\alpha = \max_{\Er} u^*(x) \ge 0$  and $K_{\alpha}$ is the
smallest positive zero of $g_{\alpha}$ if $\alpha>0$; or

\noindent{\bf  Case II:}   $u^*(x) =g_{*}(x\pm K_*)$ where $g_{*}$
satisfies
\begin{equation}\label{limit-ac}
\left \{
\begin{split}
& g_{*}''(x)-F'(g_{*}(x))=0,  \quad |g_{*}(x)| <1, \quad x \in \Er,\\
& g_{*}'(0)=0, \quad  \lim_{|x| \to \infty} g_{*}(x)=1
\end{split}
\right.
\end{equation}
with $K_*>0$ being the only positive zero of $g_{*}$.

We note that $g_0 \equiv 0$ and there is no standing wave solution
to \eqref{1d-ac} with $c_0=0$ in this case.  It is well-known that
$g_{\alpha}$ is unstable in the sense that the linearized operator
$$
\call_{\alpha} \psi:= -\psi''+F''(g_{\alpha}) \psi
$$
has a negative first eigenvalue $-\mu_{\alpha}$ in the periodic
subclass of $H^2(\Er)$ with period $L=L(\alpha)$. It is also
well-known that $g_{*}$ is unstable in the sense that the linearized
operator
$$
\call_{*} \psi:= -\psi''+F''(g_{*}) \psi
$$
has a negative first eigenvalue $-\mu_{*}$ in $H^2(\Er)$. (See,
e.g., \cite{hr1}.)

Now we repeat the computations as in  \eqref{u_y1} in the proof of
Lemma 2.1.,  and obtain
\begin{equation}\label{unbalanced-uy1}
h'(x) =F(1)-c\int_{\Er}u_y^2dy, \quad \forall x \in \Er.
\end{equation}
Hence, for any $a<b$ we have
\begin{equation}\label{unbalanced-uy2}
\int_{a}^{b} \int_{\Er} u_y^2dydx =\frac{1}{c} \bigl(h((a)-h(b)
\bigr)+ \frac{F(1) }{c} ( b-a).
\end{equation}
On the other hand, as in the proof of Lemma 2.2  we define
$$
\bar{h}_m(x)=\int_{-\infty}^0 \frac{\partial u^m}{\partial
x}\frac{\partial u^m}{\partial y } dy.
$$
It is easy to see that $|\bar{h}_m(x)|<C$ for some constant $C$
independent of $ x $ and $ m $. We can  also derive

\begin{equation*}
\bar{h}_m' (x)=-c \int_{-\infty}^0 (\frac{\partial u^m}{\partial
y})^2 dy + \frac{1}{2} (\frac{\partial u^m}{\partial x})^2 (x, 0)
-\frac{1}{2} (\frac{\partial u^m}{\partial y})^2 (x, 0)+F( u^m(x,
0)), \quad  \forall x \in \Er
\end{equation*}

In Case I,  in view of \eqref{unbalanced-uy2} we have for any fix
$R>0$,
\begin{equation*}
\int_{-R}^R \bigl[\frac{1}{2} (\frac{\partial u^m}{\partial x})^2
(x, 0) -\frac{1}{2} (\frac{\partial u^m}{\partial y})^2 (x, 0)+F(
u^m(x, 0))\bigr] dx<C+ 2 F(1) R
\end{equation*}
for some constant $C$ independent of $m, R$.

 Letting $m $ go to
infinity,  from $u^*(x)=g_{\alpha}(x-K_{\alpha})$  we obtain
\begin{equation*}
\int_{-R}^{R} \bigl[\frac{1}{2} |g_{\alpha}'|^2 +F(g_{\alpha})
\bigr] dx \le C+ 2F(1) R.
\end{equation*}
However, by the property  \eqref{doublewell} of $F$, we have
$F(g_{\alpha}) \ge F(\alpha)>F(1)$ in $\Er$ and hence
\begin{equation*}
\int_{-R}^{R} \bigl[\frac{1}{2} |g_{\alpha}'|^2 +F(g_{\alpha})
\bigr] dx > 2F(\alpha) R.
\end{equation*}
This is a contradiction when $R$ is sufficiently large.

In Case II, in view of \ref{unbalanced-uy2}  we have,  for any fix
$R>0$,
\begin{equation}\label{case2}
\begin{split}
&\int_{-R}^R \bigl[\frac{1}{2} (\frac{\partial u^m}{\partial x})^2
(x, 0) -\frac{1}{2} (\frac{\partial u^m}{\partial y})^2 (x, 0)+F(
u^m(x, 0))-F(1) \bigr] dx\\
& \le   \bigl( \bar{h}_m( R)- h(R+x_m)\bigr) - \bigl( \bar{h}_m(
-R)- h(-R+x_m)\bigr).
\end{split}
\end{equation}

We note that for any $x \in \Er$,
 $$
  | h(x+x_m)-\bar{h}_m(x)| \le C \int_{0}^{\infty} u_{y}
\bigl(x+x_m, k(x_m)+y\bigr)dy
 \le C[1-u\bigl(x+x_m, k(x_m)\bigr)]
 $$
 and hence
 $$
 \lim_{x \to \pm \infty} | h(x+x_m)-\bar{h}_m(x)| \le C[1-u^*(x)]
 $$
 for some constant $C$.

 Since
$u^*=g_{*}$,  we have
\begin{equation*}
\lim_{x \to \pm \infty}  [\lim_{m \to \infty} \bigl(
h(x+x_m)-\bar{h}_m(x) \bigr)]=0.
\end{equation*}.

 From  \ref{case2}, by first letting $m$ go to infinity and then letting
$R$ go to infinity, we obtain
\begin{equation*}
\int_{\Er} \bigl[\frac{1}{2} |g_{*}'|^2 +F(g_{*})-F(1) \bigr] dx \le
0.
\end{equation*}

On the other hand, it is easy to see
$$
\frac{1}{2} |g_{*}'|^2-F(g_{*}(x))=-F(1),\quad  \forall x \in \Er.
$$
and hence  $F(g_{*}(x))-F(1)>0,  \forall x \in \Er$. This leads to a
contradiction.
 Therefore, Lemma 3.1 is proven.

\end{proof}

Combining Lemma 3.1 with Theorem C, we immediately have the
following classification theorem for traveling wave solutions of the
unbalanced Allen-Cahn equation.

 \begin{theorem}
 Assume that $F$ is a unbalanced double well potential satisfying \eqref{doublewell} and
 $F(1)>F(-1)=0$.  Suppose  $u$ is a  solution to \eqref{eq-wave}, \eqref{monotone} and
\eqref{limit-tw}. Then  $c\ge c_0$ where $c_0$ is the unique speed
of one dimensional traveling wave solution $g$ as in \eqref{1d-ac},
and $u$ must be either planar, i.e. $u(x, y)= g( y \cos{\alpha} \pm
x \sin{\alpha } +b)$ with $\alpha=\cos^{-1}(c_0/c) \in [0, \pi/2)$
and  $b$ being a constant, or $u$ is the unique even $V$-shaped
traveling wave solution $u_{\alpha}$ (up to a translation in $x$ and
$y$).
\end{theorem}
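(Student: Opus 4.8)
The plan is to deduce Theorem~3.2 directly from Theorem~C, whose conclusion is exactly what we want; the only thing to supply is a globally Lipschitz function $\psi$ for which the condition \eqref{limit-hamel} holds. By Lemma~3.1 the $0$-level set of $u$ is the graph of a globally Lipschitz $y=k(x)\in C^3(\Er)$, say $|k'|\le L$, and by \eqref{monotone} and \eqref{limit-tw} we have $u\ge 0$ on $\{y\ge k(x)\}$ and $u\le 0$ on $\{y\le k(x)\}$; so the natural choice is $\psi:=k$. Once \eqref{limit-hamel} is verified for this $\psi$, Theorem~C gives $c\ge c_0$ together with the dichotomy (planar or $V$-shaped) and the proof is complete.

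To check the first inequality in \eqref{limit-hamel}, observe that $\{y\ge A+k(x)\}$ is the upward translate by $A$ of $\{y\ge k(x)\}$, on which $u$ is nonnegative and increasing in $y$; hence $\liminf_{A\to+\infty,\,y\ge A+k(x)}u(x,y)=\liminf_{A\to\infty}\inf_{x\in\Er}u\bigl(x,k(x)+A\bigr)$. If this were $0$, there would exist $x_m$ and $A_m\to\infty$ with $u(x_m,k(x_m)+A_m)\to 0$; put $y_m:=k(x_m)+A_m$ and $u^m(x,y):=u(x+x_m,y+y_m)$. Standard elliptic estimates give, along a subsequence, $u^m\to u^*$ in $C^3_{loc}(\Er^2)$, where $u^*$ solves \eqref{eq-wave}, $u^*_y\ge 0$ and $u^*(0,0)=0$. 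Since $|k(x+x_m)-k(x_m)|\le L|x|$, the $0$-level set of $u^m$ --- the graph of $x\mapsto k(x+x_m)-y_m$ --- tends to $y=-\infty$ uniformly on compact sets in $x$; hence $u^*\ge 0$ on $\Er^2$ and $(0,0)$ is an interior global minimum, so $\nabla u^*(0,0)=0$. As $w:=u^*_y\ge 0$ satisfies the linearized equation $w_{xx}+w_{yy}+cw_y-F''(u^*)w=0$ and vanishes at the origin, the strong maximum principle forces $w\equiv 0$; thus $u^*=u^*(x)$ solves $u^*_{xx}=F'(u^*)$ with $u^*(0)=u^{*\prime}(0)=0$, and since $F'(0)=0$ (recall $\theta=0$) ODE uniqueness yields $u^*\equiv 0$.

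It remains to contradict $u^*\equiv 0$, which I would do just as in the proofs of Lemmas 2.2 and 3.1. Set $\bar h_m(x):=\int_{-\infty}^0 u^m_x u^m_y\,dy$; since $|u_x|$ is bounded and $\int_{-\infty}^0 u^m_y\,dy=u(x+x_m,y_m)+1\le 2$, we have $|\bar h_m|\le C_0$ uniformly in $x$ and $m$. Differentiating in $x$ and using \eqref{eq-wave} gives, as in the text,
\begin{equation*}
\int_{-R}^R\Bigl[\tfrac12(u^m_x)^2-\tfrac12(u^m_y)^2+F(u^m)\Bigr](x,0)\,dx
=\bar h_m(R)-\bar h_m(-R)+c\int_{-R}^{R}\!\int_{-\infty}^{0}(u^m_y)^2\,dy\,dx,
\end{equation*}
and \eqref{unbalanced-uy2} bounds the last integral by $2C+2RF(1)$ uniformly in $m$, where $C=\sup_{\Er}|h|$. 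Letting $m\to\infty$ with $R$ fixed, the left-hand side converges to $2RF(0)$, so $2R\bigl(F(0)-F(1)\bigr)\le 2C_0+2C$; since $F$ increases on $(-1,0)$ and strictly decreases on $(0,1)$ we have $F(0)>F(1)$, and this fails once $R$ is large. The second inequality of \eqref{limit-hamel}, $\limsup_{A\to-\infty,\,y\le A+k(x)}u(x,y)<0$, is obtained symmetrically, with $u^*$ now attaining an interior global maximum $0$ at the origin. I expect this last contradiction step to be the main obstacle: one must know that the limit of translates $u^*$ cannot be one of the nontrivial one-dimensional profiles (the periodic $g_\alpha$ or the homoclinic-type $g_*$ of Lemma~3.1) but must be $0$, and this is what the one-sided pinching $\pm u^*\ge 0$ with equality at the origin secures.
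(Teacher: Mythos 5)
Your proposal is correct and follows the paper's route exactly: Lemma 3.1 supplies a globally Lipschitz level-set function $k$, which is then fed into Theorem C as the function $\psi$ in \eqref{limit-hamel}, and Theorem C delivers the conclusion. The only difference is that you explicitly verify the hypothesis \eqref{limit-hamel} for $\psi=k$ (via translation, compactness, exclusion of the trivial limit profile $u^*\equiv 0$ by the $h$-function computation with $F(0)>F(1)$), a step the paper asserts is immediate once global Lipschitz continuity of the level set is known; your verification is sound and simply reuses the machinery already deployed in the proof of Lemma 3.1.
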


\section{Traveling Waves Solutions Connecting Other One Dimensional Stationary Solutions}

 If we drop the limit assumption \eqref{limit-tw} and  instead  define
$$
u^\pm (x)= \lim_{y \to \pm \infty} u(x, y), \quad x \in \Er,
$$
then there are  eight possibilities for  the balanced Allen-Cahn
equation:

\begin{equation*}
\begin{aligned}
&\hskip-1.2in  \text{\bf (1) } \quad  u^+ =g(x-K_1), \quad
\quad u^-\equiv -1;\\
&\hskip-1.2in \text{\bf (2) } \quad  u^+ =g(K_1-x),  \quad u^-
\equiv -1;\\
&\hskip-1.2in  \text{\bf (3) } \quad  u^+ \equiv 1, \quad  u^-
=g(x-K_2);\\
&\hskip-1.2in  \text{\bf (4) }  \quad   u^+ \equiv 1, \quad
u^-=g(K_2-x);\\
&\hskip-1.2in  \text{\bf (5) } \quad  u^+=g(x-K_1), \quad
u^-=g(x-K_2), \quad K_1< K_2 ;\\
&\hskip-1.2in  \text{\bf (6) }  \quad   u^+=g(K_1-x), \quad
u^-=g(K_2-x), \quad  K_1> K_2 ;\\
&\hskip-1.2in  \text{\bf (7) }  \quad  u^+ = g_{\alpha}(x-K), \quad
u^-
\equiv -1;\\
&\hskip-1.2in  \text{\bf (8) }  \quad u^+ \equiv 1, \quad u^-=
g_{\alpha}(x-K),
\end{aligned}
\end{equation*}
where $g_{\alpha}(x), \alpha \in [0,1)$ is the periodic solution of
one dimensional Allen-Cahn eqaution  \eqref{periodic-ac} (note that
$g_0\equiv 0$), $K$ is some constant.

Modifying the  arguments  in the proofs of Lemma 2.1-2.5, we can
 exclude  Cases (1)-(6) by showing similar properties for $u$ as in Lemma 2.1-2.5.
   For example,  to
exclude Case (1), we modify \eqref{u_y1} and \eqref{u_y2} as
follows.
\begin{equation}\label{u_y3}
\begin{split}
h'(x)=&\int_{\Er} ( u_{xx}u_y + u_x u_{xy}) dy \\
=& -c\int_{\Er} u_y^2dy + F(u^+)+\frac{1}{2} |u^+_x|^2, \quad
\forall x \in \Er.
\end{split}
\end{equation}

Then
\begin{equation}\label{u_y4}
\int_{a}^{b} \int_{\Er} u_y^2dydx \le \frac{1}{c} \bigl(h((a)-h(b)
+\be \bigr).
\end{equation} and Lemma 2.1 still holds.
The rest will be essentially the same except that the 0-level set is
the graph of a function $y=\gamma(x)$  which is only defined in
$(-\infty, K_1)$.  We  just replace $x \to \infty$ by $ x \to K_1$
in the appropriate places. In this case, we can show Lemma 2.6 and
Lemma 2.7 as well. This leads to a contradiction with $u^+$.  Case
(2) can be similarly excluded.
 Cases (3)-(6) are similar up to Lemma 2.5., and can be excluded
 directly by using the Hamiltonian identity as in the proof of Lemma
 2.6. The details are omitted and  left to the reader.

Therefore, we have the following  nonexistence theorem.

\begin{theorem}
Assume that $F$ is a balanced double well potential, i.e.,  $F$
satisfies \eqref{doublewell} and $F(1)=F(-1)=0$.  Then there exists
no solution to \eqref{eq-wave} and \eqref{monotone} with the limits
 being  one of  the  above Cases (1)-(6), i.e., at least one of $u^+,
 u^-$ being a refection and/or  translation of $g$.
\end{theorem}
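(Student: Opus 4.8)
The plan is to prove Theorem 4.1 by adapting, case by case, the machinery built in Section 2 for the problem with the standard limit condition \eqref{limit-tw}, and deriving in each of the six cases a contradiction with the assumed behavior of $u^+$ or $u^-$. The unifying idea is that in every one of Cases (1)--(6) the function $h(x)=\int_{\Er} u_x u_y\,dy$ (or a suitable variant thereof, with the $y$-integral truncated) remains bounded, yet the Hamiltonian-type identity obtained by differentiating $h$ forces an energy balance that cannot hold given the prescribed limits.

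\medskip

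\noindent\textbf{Cases (1)--(2).} Here $u^-\equiv -1$ and $u^+$ is a (reflected) translate of $g$, so the $0$-level set of $u$ is the graph $y=\gamma(x)$ of a function defined only on a half-line $(-\infty,K_1)$. First I would redo the computation \eqref{u_y1}: because $u_y\to 0$ as $y\to -\infty$ but $u\to u^+(x)$, which is a transition layer, as $y\to +\infty$, the boundary term at $+\infty$ produces $F(u^+)+\tfrac12|u^+_x|^2$ instead of vanishing, giving \eqref{u_y3}. Integrating in $x$ over a finite interval and using $\int_{\Er}[\tfrac12|u^+_x|^2+F(u^+)]\,dx=\be<\infty$ yields \eqref{u_y4}, hence $\int_{\Er^2}u_y^2<\infty$, i.e.\ Lemma 2.1 survives. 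With the integrability of $u_y^2$ in hand, Lemmas 2.2--2.5 go through verbatim after the bookkeeping change ``$x\to\infty$'' $\leadsto$ ``$x\to K_1$'': $|\gamma'(x)|\to\infty$ as $x\to K_1^-$ and as $x\to-\infty$, only Case (i)-type behavior is possible, the translated solution converges along the level curve to $g(\pm x)$, and the exponential decay of Lemma 2.4 holds off the curve. Then the asymptotic analysis of Subsection 2.4 produces a logarithmic branch of the level set running off to $y=+\infty$ — but a logarithmic branch means $u(x,y)\to -1$ along a whole sub-half-plane as $y\to+\infty$, contradicting $u^+=g(x-K_1)$ (a layer). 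Case (2) is identical by reflecting $x$.

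\medskip

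\noindent\textbf{Cases (3)--(6).} These only require the theory up through Lemma 2.5 together with the Hamiltonian identity \eqref{hamiltonian-wave}, exactly as in the proof of Lemma 2.6. One computes $\rho(y)$ as $y\to+\infty$ and as $y\to-\infty$ from the prescribed profiles $u^\pm$ (each of $1$, $-1$, or a translate/reflection of $g$ contributes $0$, $\be$, or the appropriate finite energy), and in each configuration the two limits force $c\int_{\Er}\int_{\Er}u_y^2 = \rho(+\infty)-\rho(-\infty)$ to have the wrong sign or to vanish, contradicting $u_y>0$. For instance, in Case (5) both $u^+=g(x-K_1)$ and $u^-=g(x-K_2)$ give $\rho(\pm\infty)=\be$, forcing $\int u_y^2=0$; in Cases (3)--(4) one compares $\rho(+\infty)=0$ against $\rho(-\infty)=\be$, yielding a negative value for $\tfrac{1}{c}\int\int u_y^2$; Case (6) is Case (5) reflected. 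The exponential-decay input needed to make $\rho(y)$ well-defined and to justify passing to the limits in $\rho$ is supplied by the obvious analogue of Lemma 2.4 relative to whichever level set ($0$-level or, where $u^\pm$ is a constant, a level set at height near $\pm1$) is bounded, which is guaranteed once the slope-blowup lemma is in place.

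\medskip

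\noindent\textbf{Main obstacle.} The routine part is the re-derivation of the boundary terms in $h'(x)$ and of $\rho(\pm\infty)$ for each profile; the genuinely delicate point is re-establishing the preliminary package of Subsection 2.1 (the analogues of Lemmas 2.2, 2.3, 2.5 and the exponential decay Lemma 2.4) in Cases (1)--(2), where a \emph{second} transition layer, the one carried by $u^+$, is present even ``at $y=+\infty$.'' One must check that this extra layer does not destroy the uniform bound on $\meas(\Omega_y^0)$ nor the barrier construction for exponential decay away from the bounded level curve, and that the implicit-function-theorem setup leading to the ODEs \eqref{ode1}--\eqref{ode2} for $l_i(y)$ still applies on the relevant half-line. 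Once that is verified, the contradiction with $u^+=g(x-K_1)$ in Cases (1)--(2) is immediate from the logarithmic asymptotics, and Cases (3)--(6) follow from the sign analysis of the Hamiltonian identity as above. I will therefore state that the modifications of Lemmas 2.1--2.5 hold in each case with the indicated changes, carry out the two representative computations \eqref{u_y3}--\eqref{u_y4} and the $\rho$-balance for one of Cases (3)--(6) in detail, and leave the remaining symmetric cases to the reader.
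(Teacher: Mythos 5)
Your proposal is correct and follows essentially the same route as the paper: the same modified boundary-term computation \eqref{u_y3}--\eqref{u_y4} to salvage Lemma 2.1, the same transfer of Lemmas 2.2--2.7 (with $x\to\infty$ replaced by $x\to K_1$) to contradict the layer limit in Cases (1)--(2), and the same sign analysis of the Hamiltonian identity \eqref{hamiltonian-wave} via $\rho(\pm\infty)\in\{0,\be,2\be\}$ for Cases (3)--(6). If anything, your explicit flagging of the delicate point (re-establishing the Subsection 2.1 package in the presence of the second layer carried by $u^+$) is more forthcoming than the paper, which leaves those details to the reader.
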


For the unbalanced Allen-Cahn equation,  since $g$ is not a
stationary solution of \eqref{allencahn-1d}, there are only four
possibilities for $u^+, u^-$: in addition to Cases (7), (8) listed
above, there are the following two more cases:

\begin{equation*}
\begin{aligned}
&\hskip-2.2in  \text{\bf (9) }  \quad  u^+ = g_{*}(x-K), \quad u^-
\equiv -1;\\
&\hskip-2.2in  \text{\bf (10) }  \quad u^+ \equiv 1, \quad u^-=
g_{*}(x-K),
\end{aligned}
\end{equation*}
where $g_{*}$ is the unique solution to \eqref{limit-ac}.

 In Cases
(7) and (8) there is no difference between the balanced and
unbalanced Allen-Cahn equation  since Case (7) is only involved with
$F(u)$ when $u \le \alpha<1$ and Case (8) is only involved with
$F(u)$ when $u \ge -\alpha>-1$. These cases are indeed of monostable
type.  Case (7) could happen for sufficiently large $c>0$, as shown
in \cite{hr1} for $\alpha \in (0, 1) $ and \cite{hn} for $\alpha=0$
(see also \cite{hm1} for a Bunsen flame model and \cite{mn} for
related results). To be more precise, it is proven in \cite{hr1}
that for $L>L_{min}:=2 \pi \sqrt{-F''(0)}$, there exists a positive
minimum speed $c_{L}>c_0$ such that \eqref{eq-wave} has a
$L$-periodic solution $u_{c, L} (x, y)=u_{c, L} (x, y+L)$ satisfying
the limit condition $u^+=g_{\alpha(L)}, u^- \equiv -1$  if and only
if $c \ge c_L$. Here $\alpha(L)$ can be uniquely determined so that
the period of $ g_{\alpha(L)} $ is $L$. It is also shown in
\cite{hr1} that  Case (9) can happen for sufficiently large $c>0$.
Indeed, there exists $c_{\infty}>c_0$ such that there exists a
solution to \eqref{eq-wave} and \eqref{monotone} with {\it uniform}
limits $u^+, u^-$ being in Case (9) if and only if $c\ge
c_{\infty}$. (See Theorems 1.1 and 1.4 in \cite{hr1}.)

However, Cases (8) and (10) can be excluded by using a generalized
Hamiltonian identity.

\begin{theorem}
 Assume that $F$ is a  double well potential, i.e., $F$
satisfies \eqref{doublewell}.  Then there exists no solution to
\eqref{eq-wave} and \eqref{monotone} with $ u^+ \equiv 1, u^-=
g_{\alpha}(x-K) $  for any constants $\alpha \in [0, \infty), K \in
\Er$. When $F$ is unbalanced, there exists no solution to
\eqref{eq-wave} and \eqref{monotone} with $ u^+ \equiv 1, u^-=
g_{*}(x-K)$ for any constant $K$.
\end{theorem}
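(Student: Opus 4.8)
## Proof Proposal for Theorem 4.3

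The plan is to establish a \emph{generalized Hamiltonian identity} tailored to the situation where the two limits $u^\pm$ are different one-dimensional solutions, and then extract a contradiction from a sign/energy mismatch between the two ends $y\to\pm\infty$. The underlying mechanism is the same as in Lemma \ref{onecase}: the Hamiltonian quantity $\rho(y)$ is monotone in $y$ (its derivative is $c\int_\Er u_y^2\,dx \ge 0$ by Lemma \ref{hamiltonian}), so $\rho(+\infty) \ge \rho(-\infty)$, and we will show the opposite strict inequality holds for the prescribed boundary data, forcing $\int\int u_y^2 = 0$ — impossible since $u_y>0$.

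First I would redo the exponential-decay analysis of Subsection 2.2 in this setting. As $y\to+\infty$ we have $u^+\equiv 1$, so $u$ decays exponentially to $1$ away from the relevant level set, exactly as in Lemma \ref{exponential1}; hence $\rho(y)\to 0$ as $y\to+\infty$ (the integrand $\tfrac12(|\nabla_x u|^2-u_y^2)+F(u)$ tends to $0$ pointwise with exponential control, and one checks $u_y$ contributes nothing in the limit by the Lemma 2.1-type bound). As $y\to-\infty$, however, $u^-=g_\alpha(x-K)$ (or $g_*(x-K)$), which is \emph{not} a minimizer connecting to the trivial states — and this is where the sign comes in. Using the translated-solution convergence as in Lemma \ref{profile}, $\rho(y)\to \int_\Er [\tfrac12 |(g_\alpha)_x|^2 + F(g_\alpha)]\,dx$ as $y\to-\infty$ (the $u_y^2$ term again drops out). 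For the periodic solution $g_\alpha$ this integral is $+\infty$ unless one works with the natural per-period normalization; for $g_*$ it is a convergent positive integral. In the $g_*$ case we observe, as in the proof of Lemma 3.1, that $\tfrac12|g_*'|^2 - F(g_*) = -F(1)$, so $\tfrac12|g_*'|^2 + F(g_*) = 2F(g_*) - F(1) \ge 2F(\alpha_*) - F(1)$ where $\alpha_* = \min g_* > $ relevant threshold; in particular the integrand is bounded below by a positive constant on an infinite interval — no, it is integrable, but its \emph{value} $\int_\Er 2F(g_*)-F(1)\,dx$ is strictly positive and finite. Either way, $\rho(-\infty) > 0 = \rho(+\infty)$, contradicting monotonicity of $\rho$.

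For the periodic case Case (8) with general $\alpha\in[0,\infty)$, the cleaner route is to avoid $\rho$ on all of $\Er$ and instead integrate over one period in $y$ or, better, adapt the computation \eqref{u_y3}–\eqref{u_y4}: differentiating $h(x)=\int_\Er u_x u_y\,dy$ picks up boundary terms $F(u^+)+\tfrac12|u^+_x|^2$ at $y=+\infty$ and $F(u^-)+\tfrac12|u^-_x|^2$ at $y=-\infty$, giving $h'(x) = -c\int_\Er u_y^2\,dy + [F(1)+0] - [\tfrac12|(g_\alpha)_x|^2 + F(g_\alpha)]$. Since for the periodic solution $\tfrac12|(g_\alpha)_x|^2 - F(g_\alpha)$ equals the (negative) constant $-F(\alpha)$ along the trajectory — wait, the first-integral relation for $g_\alpha$ reads $\tfrac12|g_\alpha'|^2 = F(g_\alpha) - F(\alpha)$ — so $\tfrac12|(g_\alpha)_x|^2 + F(g_\alpha) = 2F(g_\alpha) - F(\alpha) \ge F(\alpha)$, strictly larger than $F(1)$ by the double-well structure \eqref{doublewell}. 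Therefore $h'(x) \le -c\int_\Er u_y^2\,dy + F(1) - F(\alpha) < -c\int_\Er u_y^2\,dy$, so integrating in $x$ over $[a,b]$ forces $\int_a^b\!\int_\Er u_y^2 \le \tfrac1c(h(a)-h(b)) - \tfrac{F(\alpha)-F(1)}{c}(b-a)$; the right side $\to-\infty$ as $b-a\to\infty$ (using $|h|\le C$), a contradiction.

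The main obstacle I anticipate is \emph{justifying the boundary terms and the drop-out of the $u_y^2$ contributions at $y=\pm\infty$ rigorously}: one needs that the limits $u^\pm$ are attained with enough uniformity (in the appropriate $x$-compact or weighted sense) to pass to the limit in $\int_\Er$, and that $u_y(\cdot,y)\to 0$ strongly enough — which is exactly the content one must port over from Lemmas 2.1–2.5. The periodic case $g_\alpha$ needs extra care because $u^-$ is not integrable over all of $\Er$ in $x$, so the $h$-function argument (integrating a local density in $y$, exploiting boundedness of $h$ in $x$) is more robust than the $\rho$-function argument there; I would present Case (8)/(7-with-$u^+\equiv1$) via the $h$-computation and reserve the $\rho$-monotonicity argument for the $g_*$ case where all integrals converge. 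The final remark that $F(\alpha)>F(1)$ — equivalently that the periodic orbit lives at an energy level above the well at $u=1$ — is immediate from \eqref{doublewell} since $g_\alpha$ oscillates with $\max g_\alpha = \alpha' $ where the turning points satisfy $F(\alpha)=F(\alpha')$ and at least one turning point lies where $F$ exceeds its value at the wells; this is the quantitative input that makes the sign strict.
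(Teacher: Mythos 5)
Your treatment of Case (8) is, in substance, exactly the paper's proof: differentiate $h(x)=\int_\Er u_xu_y\,dy$, use the equation to obtain $h'(x)=-c\int_\Er u_y^2\,dy+F(1)-F(u^-)-\tfrac12|u^-_x|^2$ as in \eqref{u_y5}, invoke the first integral $\tfrac12|g_\alpha'|^2=F(g_\alpha)-F(\alpha)$ together with $F(g_\alpha)\ge F(\alpha)>F(1)$, and let $b-a\to\infty$ against the uniform bound $|h|\le C$; this is \eqref{u_y6}. That part is correct.

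For Case (10), however, your preferred route --- the $y$-Hamiltonian $\rho(y)$ and its monotonicity --- has a genuine defect as written. The second assertion of the theorem concerns \emph{unbalanced} $F$, so $F(1)>0$. Then the integrand $\tfrac12(|\nabla_xu|^2-u_y^2)+F(u)$ does \emph{not} tend to $0$ as $y\to+\infty$; it tends to $F(1)>0$. Worse, at every fixed $y$ one has $g_*(x-K)<u(x,y)<1$ by monotonicity in $y$, so $F(u(x,y))>F(1)$ for all $x$ and the integrand does not decay as $|x|\to\infty$; hence $\rho(y)=+\infty$ for every $y$. For the same reason your claimed limit $\rho(-\infty)=\int_\Er[\tfrac12|g_*'|^2+F(g_*)]\,dx$ is not ``a convergent positive integral'': the first integral gives $\tfrac12|g_*'|^2+F(g_*)=2F(g_*)-F(1)\to F(1)>0$ as $|x|\to\infty$, so this integral diverges (your own parenthetical ``no, it is integrable'' is the wrong resolution of your hesitation). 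The argument can be repaired by renormalizing, i.e.\ subtracting $F(1)$ from the integrand of $\rho$, which leaves the identity of Lemma \ref{hamiltonian} intact and yields $\rho(+\infty)=0$ while $\rho(-\infty)=\int_\Er 2\bigl(F(g_*)-F(1)\bigr)\,dx>0$ is finite; this contradicts monotonicity of $\rho$. The paper sidesteps this bookkeeping entirely by staying with $h$: since $h(\pm\infty)=0$, integrating \eqref{u_y5} over all $x\in\Er$ gives $c\int_\Er\int_\Er u_y^2\le\int_\Er\bigl(F(1)-F(g_*)-\tfrac12|g_*'|^2\bigr)\,dx<0$ directly, the point being that the $F(1)$ contributed by $u^+\equiv1$ cancels the nonintegrable part of the $u^-$ term automatically. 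Since you also sketch this $h$-computation, your proposal contains the ingredients of a complete proof, but the Case (10) paragraph as stated asserts false limits and must be rewritten with the renormalization (or simply with the $h$-identity, as in \eqref{u_y7}).
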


\begin{proof}
We just note that as in \eqref{u_y3}, there holds
\begin{equation}\label{u_y5}
\begin{split}
h'(x)=&\int_{\Er} ( u_{xx}u_y + u_x u_{xy}) dy \\
=& -c\int_{\Er} u_y^2dy + F(1)- F(u^-)-\frac{1}{2} |u^-_x|^2, \quad
\forall x \in \Er.
\end{split}
\end{equation}

In Case (8),  $F(u^-) \ge F(\alpha)> F(1)$, then
\begin{equation}\label{u_y6}
\int_{a}^{b} \int_{\Er} u_y^2dydx \le \frac{1}{c} [ h(a)-h(b) +(b-a)
\bigl(F(1)-F(\alpha) \bigr) ].
\end{equation}
This leads to  a contradiction when $b-a$ is  chosen sufficiently
large.

In Case (10), we have
$$
\lim_{ x \to \infty} h(x)=\lim_{ x\to -\infty} h(x) =0.
$$
Then
\begin{equation}\label{u_y7}
c\int_{\Er} \int_{\Er} u_y^2dydx  \le  \int_{\Er} \bigl( F(1)-
F(g_{*})-\frac{1}{2} |g_{*}'|^2 \bigr)dx<0.
\end{equation}
This is a contradiction.  The theorem is proven.

\end{proof}

We remark that  Cases (8) and (10) could happen when the speed $c$
is sufficiently negative.  These cases are similar to (7) or (9)
except that the traveling directions should be reversed.  Another
way to understand these cases is to reverse the spatial direction
$y$ while the speed $c$  is kept positive. However,  the monotone
condition \eqref{monotone} is changed to decreasing in this
approach.

Next, we shall show that when $c>0$ is sufficiently small,
  there is no monotone traveling wave solutions with  limits as in Case (7)
   without requiring solutions being  periodic in $x$ nor the limits being
   uniform in $ x\in \Er$.

\begin{theorem}
Assume that $F$ is a double well potential, i.e., $F$ satisfies
\eqref{doublewell}. Then,  for $L \in [0, \infty)$,  there exists a
constant $c^*_L>0$ such that \eqref{eq-wave} and \eqref{monotone}
has no solution with limits $u^+=g_{\alpha(L)}, u^- \equiv -1$ when
$c< c^*_L$, where $ g_{\alpha(L)} $ is the  solution of
\eqref{periodic-ac} with a period $L$ (we use the convention  that
$\alpha(0)=0$). Similarly there exists a constant $ c^{*}>0$ such
that \eqref{eq-wave} and \eqref{monotone} has no solution with
limits $u^+=g_{*}, u^- \equiv -1$ when $c< c^{*}$ where $g_{*}$  is
the solution of \eqref{limit-ac}.
\end{theorem}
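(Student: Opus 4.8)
The plan is to argue by contradiction, reusing the Hamiltonian-type device behind \eqref{u_y1}, \eqref{u_y3} and \eqref{u_y5}. The point is that the boundary contribution of the nontrivial limit $u^{+}$ to the analogue of \eqref{u_y1} is a \emph{strictly positive} quantity, while the function $h$ below is a priori bounded and $\int u_{y}^{2}\,dy$ grows at most linearly with a constant that does \emph{not} degenerate as $c\to 0$; together these force $c$ to stay bounded away from $0$. So suppose $u$ solves \eqref{eq-wave}, \eqref{monotone} with $u^{-}\equiv -1$ and either $u^{+}(x)=g_{\alpha(L)}(x-K)$ or $u^{+}(x)=g_{*}(x-K)$. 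Put $h(x)=\int_{\Er}u_{x}u_{y}\,dy$. Since $u$ is bounded in $C^{3}(\Er^{2})$ and $u_{y}>0$, for each fixed $x$ one has $u_{y}(x,y)\to 0$ and $u_{x}(x,y)\to (u^{+})'(x)$ as $y\to +\infty$ (by monotonicity in $y$ and a compactness argument as in the proof of Lemma \ref{slope1}), and $u_{x},u_{y}\to 0$ as $y\to -\infty$ (exponential decay towards the stable state $-1$); hence $h$ is finite with $|h(x)|\le \|\nabla u\|_{\infty}\int_{\Er}u_{y}\,dy=\|\nabla u\|_{\infty}\bigl(u^{+}(x)+1\bigr)\le 2\|\nabla u\|_{\infty}=:C$, and, exactly as in \eqref{u_y1} and \eqref{u_y3},
\begin{equation*}
h'(x)=-c\int_{\Er}u_{y}^{2}\,dy+F\bigl(u^{+}(x)\bigr)+\tfrac12\bigl|(u^{+})'(x)\bigr|^{2},\qquad x\in\Er,
\end{equation*}
the term coming from $u^{-}$ vanishing since $F(-1)=0$.

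Next I would evaluate the reaction term via the first integral of the profile equation. Multiplying \eqref{periodic-ac} (resp. \eqref{limit-ac}) by the derivative and integrating gives $\tfrac12|g_{\alpha(L)}'|^{2}=F(g_{\alpha(L)})-F(\alpha(L))$ (resp. $\tfrac12|g_{*}'|^{2}=F(g_{*})-F(1)$, using the limits of $g_{*},g_{*}'$ at $\pm\infty$), whence
\begin{equation*}
F\bigl(u^{+}(x)\bigr)+\tfrac12\bigl|(u^{+})'(x)\bigr|^{2}=2F\bigl(g_{\alpha(L)}(x-K)\bigr)-F(\alpha(L))\quad\bigl(\text{resp. }2F(g_{*}(x-K))-F(1)\bigr).
\end{equation*}
Because $\tfrac12|g'|^{2}\ge 0$ we get $F(g_{\alpha(L)})\ge F(\alpha(L))$ (resp. $F(g_{*})\ge F(1)$), and since the ranges of $g_{\alpha(L)}$ and $g_{*}$ are compact subsets of $(-1,1)$ on which $F>0$ by \eqref{doublewell}, the right-hand side is $\ge\delta$ for a constant $\delta>0$: one may take $\delta=F(\alpha(L))$ in the periodic case (with $\delta=F(0)$ when $L=0$) and $\delta=F(1)>0$ in the $g_{*}$ case, since there $F$ is unbalanced.

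Finally I would integrate $h'$ over $[0,R]$. Using $\int_{\Er}u_{y}^{2}\,dy\le\|u_{y}\|_{\infty}\int_{\Er}u_{y}\,dy\le 2\|u_{y}\|_{\infty}$ and the fact that, by the standard interior elliptic estimates for \eqref{eq-wave} with constants independent of $c$ for $c\in(0,1]$, $\|u_{y}\|_{C^{0}(\Er^{2})}\le M_{0}$ with $M_{0}$ depending only on $F$, one obtains
\begin{equation*}
h(R)-h(0)=-c\int_{0}^{R}\!\!\int_{\Er}u_{y}^{2}\,dy\,dx+\int_{0}^{R}\Bigl(F(u^{+})+\tfrac12|(u^{+})'|^{2}\Bigr)dx\ \ge\ \bigl(\delta-2cM_{0}\bigr)R.
\end{equation*}
Hence if $c<c^{*}_{L}:=\min\{1,\ \delta/(2M_{0})\}$ (and, in the second assertion, $c<c^{*}$ defined the same way with $\delta=F(1)$), the right-hand side tends to $+\infty$ as $R\to\infty$, contradicting $|h|\le C$. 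This proves the nonexistence.

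The argument is soft: I do not expect a genuine obstacle, and the essential content is just the strict positivity of $F(u^{+})+\tfrac12|(u^{+})'|^{2}$ produced by the first integral of the profile equation. The two points that require some care are (i) the $c$-uniformity of the gradient bound $\|u_{y}\|_{\infty}\le M_{0}$ for $c\in(0,1]$ — without it the quantity $2cM_{0}$ need not be small even for small $c$, and the whole estimate collapses — and (ii) the routine justification of the $y\to\pm\infty$ boundary terms in the formula for $h'(x)$, i.e. that for each fixed $x$ the profile and its first derivatives converge as $y\to+\infty$ to $u^{+}$ and $(u^{+})'$, which follows from the $C^{3}$ bounds, monotonicity in $y$, and the same compactness arguments used in Section 2.
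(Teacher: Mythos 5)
Your argument is correct, and its skeleton is exactly the paper's: introduce $h(x)=\int_{\Er}u_xu_y\,dy$, compute $h'(x)=-c\int_{\Er}u_y^2\,dy+F(u^+)+\tfrac12|(u^+)'|^2$, use the first integral of the profile ODE to see that the source term is bounded below by $F(\alpha)$ (resp.\ $F(1)$), and contradict the boundedness of $h$ when $c$ is small. The one genuine difference is the ingredient used to bound $\int_{\Er}u_y^2\,dy$ uniformly in $c$. The paper invokes a Modica-type gradient estimate $|\nabla u|^2\le 2F(u)$ (stated as a Proposition inside the proof, with the proof deferred to \cite{mod}, \cite{gui-hd}), which gives $\int_{\Er}u_y^2\,dy\le\int_{\Er}u_y\sqrt{2F(u)}\,dy=G\bigl(u^+(x)\bigr)$ and hence the explicit lower bounds $c\ge F(\alpha)/G(\beta)$ and $c\ge F(1)/\be$, with no a priori restriction on $c$. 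You instead use the crude bound $\int_{\Er}u_y^2\,dy\le\|u_y\|_{\infty}\int_{\Er}u_y\,dy\le 2M_0$ with $M_0$ a gradient bound from interior elliptic estimates claimed uniform for $c\in(0,1]$. That uniformity---your flagged point (i)---does hold, since the drift $cu_y$ enters as a first-order term with coefficient bounded by $1$, so the interior $W^{2,p}$/Schauder constants depend only on $F$ and the bound $|u|\le 1$; thus your argument closes. What you lose relative to the paper is the explicit, purely $F$-dependent threshold (yours is $\min\{1,\delta/(2M_0)\}$ with a non-explicit $M_0$); what you gain is that you never need the Modica inequality, whose proof the paper itself only sketches by reference. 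Your point (ii) on the $y\to\pm\infty$ boundary terms is handled the same way the paper handles them (compactness of translates plus the $C^3$ bound), and your verification that $F(g_{\alpha})\ge F(\alpha)$ and $F(g_*)\ge F(1)$ on the relevant ranges is the same monotonicity observation the paper uses.
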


\begin{proof}

We shall first state a gradient estimate  for general traveling wave
solutions to \eqref{eq-wave} as in \cite{mod}, where the same
estimate is proved for  stationary solutions to
\eqref{allencahn-general}.

\begin{proposition}
Assume that $F(s)\ge 0, \forall s \in [-1,1]$. Suppose that $u$ is a
solution to \eqref{eq-wave}.  Then
\begin{equation}\label{gradient}
|\nabla u|^2(x,y)  \le 2F\bigl(u(x,y)\bigr), \quad (x,y) \in \Er^n.
\end{equation}
\end{proposition}

This inequality can be proven  as in \cite{mod}  with minor
modifications.  The proof is omitted here. The reader is referred to
\cite{gui-hd} for a complete proof.

Now suppose $u$ is a solution to \eqref{eq-wave} and
\eqref{monotone} with limits $u^+=g_{\alpha}, \, u^- \equiv -1$.
Then, as in \eqref{u_y3} we have
\begin{equation}\label{uy1}
\begin{split}
h'(x)=&\int_{\Er} ( u_{xx}u_y + u_x u_{xy}) dy \\
=& -c\int_{\Er} u_y^2dy + F(g_{\alpha})+\frac{1}{2} |g_{\alpha}'|^2,
\quad \forall x \in \Er.
\end{split}
\end{equation}
Hence, in view of  the fact $F(g_{\alpha}(x)) \ge F(\alpha), \forall
x \in \Er$, we obtain
\begin{equation}\label{uy2}
c\int_{a}^{b} \int_{\Er} u_y^2dydx \le  h(a)-h(b) + F(\alpha) (b-a).
\end{equation}

On the other hand, by \eqref{gradient} there holds
\begin{equation}
 \int_{\Er} u_y^2dy \le \int_{\Er} u_y \sqrt{2F(u)} dy \le
 G(\beta)
 \end{equation}
 where $ \beta:=\inf_{x\in \Er} g_{\alpha}(x) \in (-1,0)$ with $F(\beta)=F(\alpha)$,  and
 $$G(s):=\int_{-1}^{s} \sqrt{2F(t)}dt >0, \quad \forall s \in
 (-1,1].
 $$
 Hence,
\begin{equation*}
 c>\frac{ F(\alpha)}{ G(\beta)}>0
 \end{equation*}
 and
 \begin{equation}
 c^*_{L}\ge \frac{ F(\alpha(L))}{ G(\beta(L))}>0.
 \end{equation}

Similarly,  if  $u$ is a solution to \eqref{eq-wave} and
\eqref{monotone} with limits $u^+=g_{*}, u^- \equiv -1$, as in
\eqref{uy1} we have
\begin{equation}\label{uy3}
\begin{split}
h'(x)=&\int_{\Er} ( u_{xx}u_y + u_x u_{xy}) dy \\
=& -c\int_{\Er} u_y^2dy + F(g_{*})+\frac{1}{2} |g_{*}'|^2, \quad
\forall x \in \Er.
\end{split}
\end{equation}
Hence, in view of $F(g_{*}(x)) \ge F(1), \forall x \in \Er$, we
obtain
\begin{equation}\label{uy4}
c\int_{a}^{b} \int_{\Er} u_y^2dydx \ge  h(a)-h(b) + F(1) (b-a).
\end{equation}

On the other hand,  by \eqref{gradient} there holds
\begin{equation}
 \lim_{x \to \infty} \int_{\Er} u_y^2dy \le \lim_{x \to \infty} \int_{\Er} u_y \sqrt{2F(u)} dy \le
 G(1)=\be.
 \end{equation}
 Hence,
 \begin{equation}
 c \ge \frac{ F(1)}{ \be}>0
 \end{equation}
and
 \begin{equation}
 c^*\ge  \frac{ F(1)}{\be }>0.
 \end{equation}

  The theorem is proven.

\end{proof}

The lower estimates of $c^*_{L}$ and $c_*$  above are obviously not
 optimal.  Finally, we would like to ask the following  questions.

\begin{question*}\nonumber Regarding Case (7), is it true that $c_L=c^*_L$?
When $c \ge c_L$, are all solutions to \eqref{eq-wave} and
\eqref{monotone} with limits $u^+=g_{\alpha(L)}, u^- \equiv -1$
periodic?   Regarding Case (9) for unbalanced $F$,  is it true that
$c_{\infty}=c^*$? When $c \ge c_{\infty}$, are all solutions to
\eqref{eq-wave} and \eqref{monotone} with limits $u^+=g_{*}, u^-
\equiv -1$ even in $x$ after a proper translation in $x$?
\end{question*}\nonumber

{\bf Acknowledgement} This  research  is partially supported by
National Science Foundation Grant DMS 0500871.

\end{document}